\newtheorem{prop}{Proposition}[section]
\newtheorem{rem}{Remark}[section]
\newtheorem{lema}{Lemma}[section]
\newtheorem{defi}{Definition}[section]
\newtheorem{teo}{Theorem}[section]
\newtheorem{maintheorem}{Theorem}
\newtheorem{eje}{Example}[section]
\newtheorem{coro}{Corollary}[section]
\newtheorem{claim}{Claim}
\newtheorem*{claim*}{Claim}
\def\eps{\varepsilon}
\def\phi{\varphi}
\def\R{{\mathbb R}}
\def\N{{\mathbb N}}
\def\P{{\mathcal P}}
\def\F{{\mathcal F}}
\def\M{{\mathcal M}}
\def\es{{\emptyset}}
\def\sm{\setminus}
\def\crit{{\mathcal Cr}}
\def\bd{\partial }
\def\le{\leqslant}
\def\ge{\geqslant}
\def\st{such that }
\def\F{\mathcal{F}}
\def\M{\mathcal{M}}
\def\dimspec{\mathfrak{D}}
\def\htop{h_{top}}
\def\Sm{ \medspace\Big\backslash}
\title[Dimension theory for multimodal maps]{Dimension theory for multimodal maps}
\date{\today}
\subjclass[2000]{37E05, %Interval maps
37D25, %NUH
37D35, %Thermodynamic formalism, variational principles, equilibrium states
37C45, %Dimension theory of dynamical systems
}
\keywords{Multifractal spectra, thermodynamic formalism, interval maps, non-uniformly hyperbolicity, Lyapunov exponents}
\author{Godofredo Iommi} \address{Facultad de Matem\'aticas,
Pontificia Universidad Cat\'olica de Chile (PUC), Avenida Vicu\~na Mackenna 4860, Santiago, Chile}
\email{giommi@mat.puc.cl}
\urladdr{http://www.mat.puc.cl/\textasciitilde giommi/}
\author{Mike Todd} \address{Centro de Matem\'atica da Universidade do Porto, Rua do Campo Alegre 687, 4169-007 Porto, Portugal \footnote{
{\bf Current address:}\\
Department of Mathematics and Statistics\\
Boston University\\
111 Cummington Street\\
Boston, MA 02215\\
USA }\\ }
\email{mtodd@math.bu.edu}
\urladdr{http://math.bu.edu/people/mtodd/}
\begin{document}

\begin{abstract}
This paper is devoted to the study of dimension theory, in particular  multifractal analysis, for multimodal maps. We describe the Lyapunov spectrum, generalising previous results by Todd. We also study the multifractal spectrum of pointwise dimension.  The lack of regularity of the thermodynamic formalism for this class of maps is reflected in the phase transitions of the spectra.
\end{abstract}

\maketitle
\section{Introduction}

The dimension theory of dynamical systems has flourished remarkably over the last fifteen years. The main goal of the field is to compute the \emph{size} of dynamically relevant subsets of the phase space.  For example, sets where the complicated dynamics is concentrated (\emph{repellers} or \emph{attractors}). Usually, the geometry of these sets is rather complicated. That is why there are several notions of \emph{size} that can be used. One could say that a set is \emph{large} if it contains a great deal of disorder on it. Formally, one would say that the dynamical system restricted to that subset has large \emph{entropy}. Another way of measuring the size of a set is using geometrical tools, namely Hausdorff dimension. There are  usually two conditions required on the dynamical system $(X,f)$ for the dimension theory to be carried out.  Firstly, a certain amount of hyperbolicity enables us to use Markov partitions and the thermodynamic formalism machinery associated to the Markov setting.  Secondly, the geometrical nature of Hausdorff dimension means that it is convenient to assume that the map $f$ is conformal. In this case, the elements of a Markov partition are \emph{almost} balls, hence can be used to compute Hausdorff dimension (see \cite{Bar,Pesbook}  and reference therein).

 In this paper we consider smooth one dimensional maps. This implies that the map is conformal. Nevertheless, we study dynamical systems for which the hyperbolicity is rather weak (these maps have critical points).
The class of maps we will consider is defined as follows. We say that an interval map $f$ is $C^{1+}$ if it is $C^1$ and the derivative $Df$ is $\alpha$-H\"older for some $\alpha>0$.  Let $\mathcal F$ be the collection of $C^{1+}$ multimodal interval maps $f:I \to I$, where $I=[0,1]$, satisfying:

\iffalse
\newcounter{Lcount}
\begin{list}{\alph{Lcount})}
{\usecounter{Lcount} \itemsep 1.0mm \topsep 0.0mm \leftmargin=7mm}
\item the critical set $\crit = \crit(f)$ consists of finitely many critical point $c$ with critical order $1 < \ell_c < \infty$, i.e., $f(x) = f(c) + (g(x-c))^{\ell_c}$ for some diffeomorphisms $g:\R \to \R$ with $g(0) = 0$ and $x$ close to $c$;
\item $f$ has negative Schwarzian derivative, i.e., $1/\sqrt{|Df|}$ is convex;
\item the non-wandering set $\Omega$ (the set of points $x\in I$ \st for arbitrarily small neighbourhoods $U$ of $x$ there exists $n(U)\ge 1$ \st $f^n(U)\cap U\neq \es$) consists of a single interval;
\item $f^n(\crit)  \neq f^m(\crit)$ for $m \neq n$.
\end{list}
\fi

\newcounter{Lcount}
\begin{list}{\alph{Lcount})}
{\usecounter{Lcount} \itemsep 1.0mm \topsep 0.0mm \leftmargin=7mm}

\item the critical set $\crit = \crit(f)$ consists of finitely many critical points $c$ with critical order $1 < \ell_c < \infty$, i.e., there exists a neighbourhood $U_c$ of $c$ and a diffeomorphism $g_c:U_c \to g_c(U_c)$ with $g_c(c) = 0$
     $f(x) = f(c) + g_c(x)^{\ell_c}$;
\item $f$ has negative Schwarzian derivative, i.e., $1/\sqrt{|Df|}$ is convex; \iffalse\textbf{GI: don't we need class $C^3$ for the Schwarzian derivative to be defined? maybe we can just ask for the convex part to be true. Maybe I already mentioned this.. MT: this convexity is enough for the distortion properties we require for the inducing schemes}

\item the non-wandering set $\Omega$ (the set of points $x\in I$ \st for arbitrarily small neighbourhoods $U$ of $x$ there exists $n(U)\ge 1$ \st $f^n(U)\cap U\neq \es$) consists of a single interval;\fi
\item $f$ is topologically transitive on $I$;
\item $f^n(\crit)  \cap f^m(\crit)=\es$ for $m \neq n$.
\end{list}

Note that maps in this class do not have parabolic fixed points since the negative Schwarzian condition rules out repelling parabolic cycles and topological transitivity rules out attracting parabolic cycles. Indeed, this follows from Singer's Theorem \cite[Theorem II.6.1]{MSbook} plus the negative Schwarzian condition (the $C^3$ condition can be removed by for example \cite{Ced}).  We refer to \cite[Remarks 1.1 and 1.2]{IT} for more information on this choice of family of maps.
The thermodynamic formalism for these maps was studied in \cite{IT}.  We proved that in an interval of the form $(-\infty, t^+)$ for some $t^+>0$, the pressure function $t \to P(-t \log |Df|)$ is strictly concave, $C^1$ and the  `natural/geometric' potential $x\mapsto - t \log |Df(x)|$ has a unique
equilibrium state (see Section \ref{sec:therm form} for precise definitions and statements). In particular in the interval $(-\infty, t^+)$ the thermodynamic formalism has similar properties as in the uniformly hyperbolic case. In the interval $(t^+, \infty)$ the pressure function is linear. In particular, at the point  $t=t^+$ it exhibits a so-called \emph{first order phase transition}, that is a point where it is not smooth. This lack of regularity is closely related to the different modes of recurrence of the system (see \cite{Sartherm, Sarphase}).

If $f$ has an absolutely continuous invariant probability measure (acip) $\mu$, we will often denote this measure by $\mu_{ac}$.  We define $\F_{ac}\subset \F$ to be the class of maps with an acip $\mu_{ac}$ and with $p>1$ such that $\frac{d\mu_{ac}}{dm}\in L^p(m)$ where $m$ denotes Lebesgue measure.

This paper is devoted to the study of the dimension theory for maps in  $\F_{ac}$. In particular, we are interested in its  multifractal analysis (see Section \ref{sec:prelim dim} for precise definitions).
Our first goal is to study the \emph{Lyapunov spectrum} (see Section \ref{sec:prelim dim} for precise definitions). Making use of the thermodynamic formalism we are able to describe the size (Hausdorff dimension) of the level sets determined by the Lyapunov exponent of these maps. We denote by $J(\lambda)$ the set of points having $\lambda$ as Lyapunov exponent. Dynamical and geometrical features are captured in this decomposition. We next state our first main theorem: the set $A$ is defined in Section~\ref{sec:Lyap spec}.

\begin{maintheorem} \label{thm:main Lyap}
 Suppose that $f\in \F_{ac}$.   Then for all $\lambda \in \R\sm A$,
\begin{equation*}
L(\lambda):= \dim_H(J(\lambda)) = \frac{1}{\lambda} \inf_{t \in \mathbb{R}}\left(P(-t \log |Df|) + t \lambda\right).
\end{equation*}
\end{maintheorem}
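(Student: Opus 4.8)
The plan is to realise the Lyapunov level sets $J(\lambda)$ through a family of inducing schemes (first-return or Hofbauer-type maps to a suitable interval away from the critical set), so that on each induced system the dynamics is uniformly expanding with a countable Markov structure, and then to transfer the countable-Markov multifractal formalism back to the original system. The two sides of the claimed formula will be obtained separately: the upper bound for $\dim_H J(\lambda)$ from a covering argument using the pressure function, and the lower bound by constructing, for each $\lambda\notin A$, an invariant measure $\mu_\lambda$ supported on $J(\lambda)$ (or on its closure in the induced system) whose Lyapunov exponent is $\lambda$ and whose dimension $h(\mu_\lambda)/\lambda$ realises the infimum. Throughout I will use the results of \cite{IT} quoted above: that $t\mapsto P(-t\log|Df|)$ is $C^1$ and strictly convex on $(-\infty,t^+)$ with unique equilibrium states there, and linear on $(t^+,\infty)$; this dictates exactly for which $\lambda$ the Legendre-type transform $\inf_t(P(-t\log|Df|)+t\lambda)$ is attained at an interior point, and the exceptional set $A$ should consist precisely of the slopes not seen by equilibrium states (including $\lambda$ below the exponent of $\mu_{ac}$ or the minimal exponent, and possibly $\lambda=\lambda_{\inf}$ and the degenerate endpoint behaviour).

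Concretely, first I would fix $\lambda$ with $\lambda\notin A$ and set $T(\lambda):=\inf_{t\in\R}\left(P(-t\log|Df|)+t\lambda\right)$. Since the pressure is $C^1$ and strictly convex on $(-\infty,t^+)$, for $\lambda$ in the relevant range there is $t_\lambda$ with $\frac{d}{dt}P(-t\log|Df|)\big|_{t_\lambda}=-\lambda$, and the equilibrium state $\mu_{t_\lambda}$ for $-t_\lambda\log|Df|$ satisfies $\int \log|Df|\,d\mu_{t_\lambda}=\lambda$ and, by the variational principle, $h(\mu_{t_\lambda})-t_\lambda\lambda = P(-t_\lambda\log|Df|)$, whence $h(\mu_{t_\lambda})/\lambda = T(\lambda)/\lambda = L(\lambda)$. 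By the ergodicity of $\mu_{t_\lambda}$ and Birkhoff's theorem, $\mu_{t_\lambda}(J(\lambda))=1$, and by the Hofbauer--Raith / Ledrappier-type formula $\dim_H\mu_{t_\lambda}=h(\mu_{t_\lambda})/\lambda$ (valid here because $f$ is conformal and the measure has positive exponent — this uses the acip hypothesis and the pressure regularity from \cite{IT} to guarantee $\mu_{t_\lambda}$ is a genuine equilibrium state with the right dimension). This gives the lower bound $L(\lambda)\ge h(\mu_{t_\lambda})/\lambda$.

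For the upper bound I would pass to an induced (uniformly hyperbolic, countable-branch) map $F=f^\tau$ on an interval $Y$ not containing critical points, chosen so that $F$ captures the recurrence to a neighbourhood where $\log|Df|$ is controlled; the key tool is that $P(-t\log|Df|)$ for $f$ agrees, in the relevant $t$-range, with the (Gurevich/Sarig) pressure of the induced potential, as established in \cite{IT}. On $Y$ one covers $J(\lambda)\cap Y$ by cylinders of the induced map along which the Birkhoff average of $\log|DF|$ is close to $\lambda\cdot(\text{average return time})$; a standard Besicovitch/Moran-cover estimate then bounds the $s$-dimensional Hausdorff measure by a subexponentially-growing sum provided $s>T(\lambda)/\lambda$, using the definition of pressure as the critical exponent for which $\sum_{\text{cylinders}} |DF|^{-t}e^{s\tau}$-type sums converge. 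Points of $J(\lambda)$ whose orbit does not recur to $Y$ (or recurs too slowly) form a set of dimension $\le\dim_H J(\lambda_{\inf})$ or are handled by the linear part of the pressure; this is where the condition $\lambda\notin A$ is used a second time.

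The main obstacle I anticipate is the bookkeeping at the level of the inducing scheme: making precise that the pressure of the induced system equals the original pressure on the whole interval $(-\infty,t^+)$ of interest (not just where an acip exists), controlling distortion of $F$ near the boundary of $Y$ and near the critical orbits using only the negative-Schwarzian/$C^{1+}$ hypotheses, and showing that the ``bad'' set of points that either fail to return to $Y$ or have anomalously small returns does not contribute to the dimension. Equivalently, the delicate point is identifying $A$ exactly — the set of $\lambda$ for which either the infimum in the formula is attained only in the limit $t\to t^+$ (phase transition) or for which no equilibrium measure has exponent $\lambda$ — and checking that for all other $\lambda$ the variational characterisation and the covering bound match. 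Once the induced thermodynamic formalism of \cite{IT} is in hand, the rest is the now-standard conditional variational principle argument (cf. the countable-Markov multifractal analysis of Iommi, and Todd's earlier work referenced in the abstract), so I expect the proof to reduce to verifying its hypotheses in this setting.
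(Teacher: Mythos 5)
Your overall architecture (inducing schemes, variational lower bound via equilibrium states and the Hofbauer dimension formula, covering argument for the upper bound through the Markov structure with $\F_{ac}$ controlling the ``unseen'' set) is the same as the paper's, and your treatment of the range $\lambda\in(-D^-p(t^+),\lambda_M)$ is essentially correct. However, there is a genuine gap: your reading of the set $A$ is wrong, and as a result you have no lower-bound argument for the most delicate part of the theorem. You take $A$ to be ``the slopes not seen by equilibrium states,'' which would suggest excluding the whole interval $[\lambda_m, -D^-p(t^+))$. But the paper's definition sets $A=\{0\}$ when $t^+=1$, so the theorem is asserted for \emph{all} $\lambda\in(0,-D^-p(1))$, even though no equilibrium state for $-t\log|Df|$ has Lyapunov exponent in that range (every $\mu_t$ with $t<1$ has $\lambda(\mu_t)\ge -D^-p(1)$, and for $t\ge 1$ the pressure is flat). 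For these $\lambda$ the Legendre infimum is attained at $t=1$ and equals $\lambda$, so the claim is $L(\lambda)=1$ — but your equilibrium-state-plus-Hofbauer lower bound simply does not apply.

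The paper handles this via Lemma~\ref{lem:small LE big dim}: starting from a periodic point with small exponent $\alpha'<\alpha$ (Lemma~\ref{lem:per pt small LE}), it builds an inducing scheme around it, truncates to finitely many branches to get a uniformly hyperbolic subsystem, and shows that the truncated pressure $p_N(t)$ converges to $p(t)$ in a quantitative way near $t=1$; this produces ergodic measures $\mu_\alpha$ with $\lambda(\mu_\alpha)=\alpha$ and $\dim_H(\mu_\alpha)\ge 1-\eps$. A parallel argument, using that $t^+>1$ forces $\lambda(\mu)>\lambda_m$ for all $\mu\in\M$, gives the inequality $L(\lambda)\ge t^+ + p(t^+)/\lambda$ on $A$ itself (where the paper only claims a one-sided bound). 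Without this approximation-by-truncated-schemes idea, your proposal cannot establish the equality for $\lambda$ in the interval $(0,-D^-p(1))$ in the non-Collet-Eckmann case, which is precisely where the phase transition makes the thermodynamic-formalism argument break down. You should add this construction to complete the lower bound, and adjust your description of $A$ so it is not characterised as ``exponents missed by equilibrium states'' but rather as the specific interval $[\lambda_m,-D^-p(t^+))$ (resp.\ $\{0\}$) where only a one-sided bound is obtained (resp.\ the trivial exclusion).
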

In particular, the function $\lambda \mapsto \lambda \dim_H(J(\lambda)) $ is the Legendre-Fenchel transform of the pressure function $t \mapsto P(-t \log |Df|)$. The lack of hyperbolicity of $f \in \F_{ac}$ is reflected in the lack of regularity of the pressure functions (\emph{phase transitions}). Therefore, the  Lyapunov spectrum keeps track of all the changes in the recurrence modes of the system (see \cite{Sarphase}).

\iffalse
This Theorem generalises previous results by Todd \cite{T} and greatly improves on other results in the area where non-uniformly hyperbolic maps having a parabolic fixed point have been studied (see for example \cite{BarIo,GelRa,Iom,JoRa,KeS,Nak,PolWe}). In the complex setting, Gelfert, Przytycki and Rams \cite {GelPrRa}  studied the Lyapunov spectrum for rational maps.

The following Theorem has been proved in different settings with different assumptions on the hyperbolicity of the system. For example, for the Gauss map it was proved by Kesseb\"ohmer  and  Stratmann \cite{KeS}, for maps with parabolic fixed points by
Gelfert and Rams \cite{GelRa}, for maps with countably many branches and parabolic fixed points  by Iommi \cite{Iom} and for rational maps by Gelfert,  Przytycki and Rams \cite{GelPrRa}.
\fi

Theorem~\ref{thm:main Lyap} has been proved in different settings with different assumptions on the hyperbolicity of the system. For example, for the Gauss map it was proved by Kesseb\"ohmer  and  Stratmann \cite{KeS}; for maps with parabolic fixed points related results were shown in \cite{BarIo,GelRa,JoRa,KeS,Nak,PolWe}; and for maps with countably many branches and parabolic fixed points this was shown by Iommi \cite{Iom}.   For rational maps on the complex plane a similar result was recently shown by Gelfert,  Przytycki and Rams \cite{GelPrRa}.

We also study the multifractal spectrum of the pointwise dimension of equilibrium measures for H\"older potentials. The first thing that needs to be proved is that in this non-uniformly hyperbolic setting, H\"older potentials have unique equilibrium states.  We first need to define the class of maps we can apply our results to: $\F_D\subset \F$  is the class of $C^3$ maps with $|Df^n(f(c))|\to \infty$ for any $c\in \crit$.  The main reason we impose these conditions is to ensure that `induced potentials' we deal with in this paper are sufficiently smooth, an application of \cite[Lemma 4]{BTeqgen}.  Moreover, as in \cite{BRSS}, $\F_D\subset \F_{ac}$.  The increased smoothness of the map allows us to study the pointwise dimension for equilibrium states for $\phi$ through the study of potentials of the form $-t\log|Df|+s\phi$.  See \cite{Pesbook} for a general account of this approach.  So for example, as shown in Section~\ref{sec:temp}, using \cite{IT}, we obtain:

\begin{teo}\label{thm:eq cts pots}
Suppose $f\in \F_D$ and $\phi: I \to \R$ is a H\"older potential  with $\phi<P(\phi)$.  Then there exists $\eps>0$ such that for each $t\in (-\eps, \eps)$ there is a unique equilibrium state $\mu_{\phi+t\log|Df|}$ for $(I, f,\phi+t\log|Df|)$.  Moreover $h(\mu_{\phi+t\log|Df|})>0$.
\end{teo}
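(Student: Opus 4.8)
The plan is to construct the equilibrium state from an inducing scheme, exactly in the spirit of \cite{IT}, treating $\psi_t:=\phi+t\log|Df|$ as a small perturbation of $\phi$, for which the hypothesis $\phi<P(\phi)$ keeps us in the ``good'' (non-uniformly hyperbolic but non-degenerate) regime. Recall from \cite{IT} that for $f\in\F$ there is an inducing scheme $(X,F,\tau)$, where $X$ is a small interval disjoint from $\crit$ and its forward orbit and $F=f^\tau:\bigcup_i X_i\to X$ is a full-branch, uniformly expanding Markov map with bounded distortion (from the $C^{1+}$ and negative Schwarzian hypotheses) and good control on the tails $\{\tau>n\}$. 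Writing $\bar g=\sum_{k=0}^{\tau-1}g\circ f^k$ for the induced version of a potential $g$, I would first check that the induced potentials $\overline{\psi_t}-P(\psi_t)\tau=\bar\phi-P(\psi_t)\tau+t\log|DF|$ are locally H\"older on the associated countable Markov shift: $\log|DF|$ is locally H\"older by bounded distortion, $\bar\phi$ is locally H\"older since $\phi$ is H\"older and the cylinders of $F$ contract exponentially, and this is exactly where $f\in\F_D$ is needed --- the $C^3$ hypothesis and $|Df^n(f(c))|\to\infty$ deliver, via \cite[Lemma~4]{BTeqgen}, the regularity (and uniform control) required to run the countable Markov shift formalism. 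Recall also that $\F_D\subset\F_{ac}$ by \cite{BRSS}, so $f$ lies in the class to which the results of \cite{IT} apply.

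Next I would produce the equilibrium state and its positive entropy. Since $\phi<P(\phi)=P(\psi_0)$ and $t\mapsto P(\psi_t)$ is finite and continuous near $0$ (by convexity of the pressure and the estimates of \cite{IT}; note $\log|Df|$ is bounded above), one can fix $\eps>0$ so that $\sup\phi<P(\psi_t)$ for all $|t|<\eps$. Together with the tail estimates for the inducing scheme, this should force the induced potential $\overline{\psi_t}-P(\psi_t)\tau$ to have finite induced pressure equal to $0$, to be positive recurrent, and to carry a unique Gibbs equilibrium measure $\mu_t^F$ with $\int\tau\,d\mu_t^F<\infty$, all as in \cite{IT}. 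The Abramov--Kac construction then produces the $f$-invariant probability measure $\mu_t:=\big(\int\tau\,d\mu_t^F\big)^{-1}\sum_{k\ge0}f_*^k\big(\mu_t^F|_{\{\tau>k\}}\big)$, which by the standard identity relating induced and non-induced pressure is an equilibrium state for $\psi_t$; since $\mu_t^F$ is a fully supported Gibbs measure on a countable full shift on infinitely many symbols, $h(\mu_t^F)>0$, and Abramov's formula $h(\mu_t)=h(\mu_t^F)/\int\tau\,d\mu_t^F$ gives $h(\mu_t)>0$.

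For uniqueness, I would take an arbitrary equilibrium state $\nu$ for $\psi_t$, so $h(\nu)+\int\phi\,d\nu+t\int\log|Df|\,d\nu=P(\psi_t)$. Every $f$-invariant measure has $\int\log|Df|\,d\nu\ge0$ (maps in $\F$ have no attracting or neutral periodic orbits) and $\log|Df|\le M:=\log\|Df\|_\infty$, so after shrinking $\eps$ further so that $P(\psi_t)-\sup\phi>|t|\max\{M,0\}$ for $|t|<\eps$, one gets $h(\nu)\ge P(\psi_t)-\sup\phi-t\int\log|Df|\,d\nu>0$. A positive-entropy measure has positive Lyapunov exponent (Ruelle's inequality), hence $\nu$ lifts to an $F$-invariant probability $\nu^F$ with $\int\tau\,d\nu^F<\infty$ by the liftability results used in \cite{IT}; then $\nu^F$ is an equilibrium state for the induced potential and so $\nu^F=\mu_t^F$ by uniqueness on the countable Markov shift, whence $\nu=\mu_t$.

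The main obstacle will be the control of the inducing scheme \emph{uniformly in} $t$ near $0$ --- in particular the positive recurrence of $\overline{\psi_t}-P(\psi_t)\tau$ and the finiteness of the expected return time $\int\tau\,d\mu_t^F$ --- since these are precisely the features that the non-uniform hyperbolicity of $f$ (the first-order phase transition in the geometric pressure at $t^+$) threatens. The hypothesis $\phi<P(\phi)$ is what keeps $t=0$, and hence a whole interval $(-\eps,\eps)$, away from that phase transition, and makes the perturbative argument above possible.
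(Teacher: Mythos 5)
Your proposal is correct and follows essentially the same route the paper takes: the paper defers the proof to the inducing-scheme machinery of \cite[Section~5]{IT} and to the proof of Theorem~\ref{thm:unique mu_q}, which is exactly what you implement — using $\phi < P(\phi)$ to keep nearly-optimal measures at uniformly positive entropy (hence positive Lyapunov exponent, hence liftable), constructing the Gibbs measure on the induced full shift (with summable variations coming from the H\"older hypothesis and $f\in\F_D$ via \cite[Lemma~4]{BTeqgen}), projecting via Abramov--Kac, and deducing uniqueness by lifting any competitor. The few places you gesture at rather than prove (continuity of $t\mapsto P(\psi_t)$, positive recurrence and integrability of $\tau$, and the lifting of an arbitrary positive-entropy equilibrium state) are exactly the points the paper also handles by citation to \cite{IT}.
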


\iffalse
\begin{rem}\label{rmk:BTeqgen}
Note that in \cite{BTeqgen} the range of potentials for which existence and uniqueness of equilibrium states was proved was narrower than in the above theorem, but the statistical properties of those equilibrium states and the relevant properties of the pressure function were stronger.
\end{rem}

This result is an important improvement on results by Bruin and Todd \cite{BTeqgen}, where it was proved that H\"older potentials close the constant function have unique equilibrium states.

\fi

This theorem was proved by Bruin and Todd \cite{BTeqgen} for a narrower range of potentials $\phi$: potentials not too far from the constant function.  So in some ways the above theorem is an improvement on Bruin and Todd's results.  However, we note that the statistical properties of the equilibrium states in \cite{BTeqgen} and the relevant properties of the pressure function are stronger.

\begin{rem}\label{rmk:conf}
Existence of a conformal measure for the H\"older potential $\phi$ as in Theorem~\ref{thm:eq cts pots} follows from \cite{DenUrb} and uniqueness follows as in \cite[Theorem 8]{Dob_comp} and \cite[Theorem 35]{Dobcusp}.  For a discussion of the different classes of smoothness of potentials required to guarantee the existence of equilibrium states see \cite[Section 1]{BTeqgen}.
\end{rem}

We describe the multifractal decomposition induced by the pointwise dimension of equilibrium measures for H\"older potentials (see Section \ref{sec:prelim dim} for precise definitions). When considering uniformly  hyperbolic dynamical systems and H\"older potentials the multifractal spectrum of pointwise dimension is very regular, indeed it has bounded domain, it is strictly concave and real analytic (see \cite[Chapter 7]{Pesbook}).  In our setting the multifractal spectrum can exhibit different behaviour. Not only can it have unbounded domain, but it can also have points where it is not analytic and sub-intervals where it is not strictly concave. This is a consequence of the lack of hyperbolicity of our dynamical systems and of the  following result (see Section \ref{sec:dspec proof}).
Given $(I,f, \phi)$ we let $\mu_{\phi}$ be the equilibrium state and denote
 \[T_\phi(q):= \inf \{ t \in \mathbb{R} : P(-t \log |Df| +q \phi) =0\}. \]
We show in Section~\ref{sec:temp} that this function is $C^1$ and strictly convex in an interval $(q_\phi^-, q_\phi^+)$.  The size of this interval is discussed in Section~\ref{sec:temp} and in Remark~\ref{rem:lack of dim upper bd}.  We let $\dimspec_{\mu_\phi}(\alpha)$ be the multifractal spectrum of pointwise dimension %and $\tilde \dimspec_{\mu_\phi}(\alpha)$ be the multifractal spectrum of pointwise dimension restricted to points with positive Lyapunov exponent
(see Section~\ref{sec:dim spec intro} for definitions). Note that in \cite[Theorem A]{T} it was necessary to restrict the results to points with positive pointwise upper Lyapunov exponent.  In the next result we are able to remove this restriction by results of \cite{rivshen}. For further discussion of the condition that $\mu_\phi$ is not absolutely continuous with respect to the Lebesgue measure see Section~\ref{sec:dspec proof}.

 \begin{maintheorem} \label{thm:main multi}
   Suppose that $f\in \F_D$ and $\phi:I \to \R$ is a H\"older potential with $\phi<P(\phi)=0$.  If $\mu_\phi$ is not absolutely continuous with respect to the Lebesgue measure then the dimension spectrum satisfies
\begin{equation*}
 \dimspec_{\mu_\phi}(\alpha)= \inf_{q \in \mathbb{R}} \left(T_\phi(q) +q \alpha \right)
\end{equation*}
for all $\alpha \in (-DT_\phi(q_\phi^+), -D^+T_\phi(q_\phi^-))$.
\end{maintheorem}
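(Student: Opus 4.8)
The plan is to reduce the dimension spectrum to a Lyapunov-type spectrum via the standard "potential-weighted" approach (as in \cite{Pesbook, T}) and then to apply the variational/thermodynamic machinery of \cite{IT} together with the conformality of $f$. The starting point is the identity, valid at $\mu_\phi$-typical points and more generally on the level sets, relating the pointwise dimension of $\mu_\phi$ at $x$ to the ratio of the Birkhoff averages of $\phi$ and of $-\log|Df|$: since $P(\phi)=0$, for a point $x$ with well-defined Lyapunov exponent $\lambda(x)=\lim \frac1n\log|Df^n(x)|>0$ and well-defined average $\frac1n S_n\phi(x)\to\phi^*(x)$, the Gibbs-type property of $\mu_\phi$ on induced dynamics gives $d_{\mu_\phi}(x) = \frac{-\phi^*(x)}{\lambda(x)}$. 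Thus $\{x : d_{\mu_\phi}(x)=\alpha\}$ is, up to the exceptional set of points with zero or undefined lower/upper Lyapunov exponent, the set of $x$ with $-\phi^*(x) = \alpha\,\lambda(x)$, i.e. a level set for the generalized Birkhoff average of the pair of potentials $(\phi, -\log|Df|)$.

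First I would make precise the role of the hypothesis that $\mu_\phi$ is not absolutely continuous with respect to Lebesgue: by \cite{rivshen} (invoked exactly as announced in the text), for $f\in\F_D$ the set of points with zero upper Lyapunov exponent is negligible for the purposes of computing the Hausdorff dimension of the level sets in the stated range, so one may restrict attention to points with positive pointwise Lyapunov exponent and genuinely apply the conformal estimate above. This is what removes the restriction present in \cite[Theorem A]{T}. Then I would set up the conditional variational principle: for $\alpha$ in the interior of the relevant interval,
\begin{equation*}
\dimspec_{\mu_\phi}(\alpha) = \sup\left\{ \frac{h(\nu)}{\lambda(\nu)} : \nu\in\M_f,\ \lambda(\nu)>0,\ -\int\phi\,d\nu = \alpha\,\lambda(\nu) \right\},
\end{equation*}
where $\lambda(\nu)=\int\log|Df|\,d\nu$. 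The lower bound for $\dimspec$ comes from constructing, for each such $\nu$ (in particular equilibrium states for $-t\log|Df|+q\phi$), a subset of the level set carrying the right dimension — this is done via the inducing schemes of \cite{IT}, lifting $\nu$ to the induced (full-branch Markov) system where the Bowen-type formula and the mass distribution principle apply cleanly because $f$ is conformal and distortion is controlled by the negative Schwarzian condition. The upper bound follows from a covering argument using the same induced structure together with the fact that, on the range of $\alpha$ considered, the infimum defining $T_\phi$ is attained at an interior point $q$ with $\alpha=-DT_\phi(q)$.

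The final step is to identify the variational expression with the Legendre--Fenchel transform $\inf_{q\in\R}(T_\phi(q)+q\alpha)$. Here I would use that $T_\phi$ is, by the implicit function theorem applied to $P(-t\log|Df|+q\phi)=0$, a $C^1$ strictly convex function on $(q_\phi^-,q_\phi^+)$ — this is the content of what is proved in Section~\ref{sec:temp} — so its derivative sweeps out exactly the interval $(-DT_\phi(q_\phi^+), -D^+T_\phi(q_\phi^-))$, and for $\alpha$ in this interval the infimum $\inf_q (T_\phi(q)+q\alpha)$ is attained at the unique $q=q(\alpha)$ with $-DT_\phi(q)=\alpha$. Unwinding the definition of $T_\phi$ at that point, $t=T_\phi(q)$ satisfies $P(-t\log|Df|+q\phi)=0$; letting $\mu$ be the equilibrium state for this potential (which exists and has positive entropy by \cite{IT}, the analogue of Theorem~\ref{thm:eq cts pots}), the variational principle gives $h(\mu) = t\lambda(\mu) - q\int\phi\,d\mu$, and differentiating the defining relation in $q$ shows $-\int\phi\,d\mu = \alpha\lambda(\mu)$, so $\mu$ is admissible in the conditional variational principle and achieves $h(\mu)/\lambda(\mu) = t = T_\phi(q)+q\alpha$. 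Combined with the two inequalities this gives equality.

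I expect the main obstacle to be the control of the exceptional (non-hyperbolic) part of the spectrum: ensuring that points with zero lower Lyapunov exponent — or with pointwise averages of $\phi$ or $\log|Df|$ failing to exist — do not contribute more Hausdorff dimension than the stated formula on the range $(-DT_\phi(q_\phi^+),-D^+T_\phi(q_\phi^-))$. This is precisely where the non-absolute-continuity of $\mu_\phi$ and the estimates of \cite{rivshen} must be combined carefully, and where the boundary behaviour of $T_\phi$ (finiteness of one-sided derivatives at $q_\phi^\pm$, linked to the first-order phase transition of the pressure at $t^+$ discussed in the introduction) determines exactly how far the formula can be pushed. The inducing-scheme bookkeeping for the lower bound — choosing inducing domains adapted to $\nu$ and verifying the tail/integrability conditions needed to transfer entropy and Lyapunov exponent without loss — is technical but follows the template already established in \cite{IT} and \cite{T}.
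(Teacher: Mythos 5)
Your overall strategy coincides with the paper's: reduce the dimension spectrum to a Birkhoff-type spectrum via the identity $d_{\mu_\phi}(x)=-\phi^*(x)/\lambda(x)$, produce the lower bound by exhibiting the equilibrium state $\mu_{\phi_q}$ for $-T_\phi(q)\log|Df|+q\phi$ as a measure of full dimension on the level set, invoke \cite{rivshen} together with Theorem~\ref{thm:schemes}(c) to dispose of points with vanishing upper Lyapunov exponent, and obtain the upper bound through inducing schemes as in \cite[Theorem A]{T}. The Legendre--Fenchel identification at $q=q(\alpha)$ with $\alpha=-DT_\phi(q)$, using differentiability and strict convexity of $T_\phi$ and the variational characterisation of $T_\phi(q)$, is exactly how the paper closes the argument via Theorem~\ref{thm:Tphi properties}(c). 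Your packaging of the lower and upper bounds into a single conditional variational principle is an equivalent reorganisation, not a genuinely different route.

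One misattribution worth correcting. You write that the hypothesis $\mu_\phi\neq\mu_{ac}$ is what makes the set of zero upper Lyapunov exponent negligible. That is not where this hypothesis enters: the estimate $\dim_H\{x:\overline\lambda_f(x)=0\}=0$ is unconditional in \cite{rivshen} and needs nothing about $\mu_\phi$. The hypothesis $\mu_\phi\neq\mu_{ac}$ is used in Theorem~\ref{thm:Tphi properties}(e) to prove that $T_\phi$ is \emph{strictly} convex on $(q_\phi^-,q_\phi^+)$. Without it, $T_\phi$ can be affine on an interval (the argument there shows that an affine piece of $T_\phi$ forces $\mu_\phi=\mu_{ac}$), and then your final step — that $\inf_q(T_\phi(q)+q\alpha)$ is attained at a unique interior $q(\alpha)$ with $-DT_\phi(q(\alpha))=\alpha$, so that the derivative sweeps out the full interval $(-DT_\phi(q_\phi^+),-D^+T_\phi(q_\phi^-))$ — breaks down: the spectrum degenerates to a single point. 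Since you do invoke "what is proved in Section~\ref{sec:temp}" for strict convexity, the proof would still go through, but the stated motivation for the non-absolute-continuity hypothesis should be relocated from the exceptional-set argument to the strict convexity of $T_\phi$.
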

\iffalse
This formula for the dimension spectrum was first rigorously  proved by   Olsen \cite{Ols}  and by Pesin and Weiss \cite{PesWei_mult}  for uniformly hyperbolic maps and for Gibbs measures. Ever since, there has been an effort to describe the dimension spectrum for maps having weaker hyperbolicity assumptions. One of the major difficulties is the lack of regularity of the thermodynamic formalism. In the context of interval multimodal maps  thus study began with the work of  Todd \cite{T} and this Theorem generalises his results.
\fi

This formula for the dimension spectrum was first rigorously  proved by   Olsen \cite{Ols}  and by Pesin and Weiss \cite{PesWei_mult} for uniformly hyperbolic maps and for Gibbs measures.  The case of the Manneville Pomeau map (non-uniformly hyperbolic map) was studied by Nakaishi \cite{Nak},  by Pollicott and Wiess \cite{PolWe} and by Jordan and Rams \cite{JoRa}.
The case of Horseshoes with a parabolic fixed point was considered in Barreira and Iommi \cite{BarIo}. Multifractal analysis of pointwise dimension was also considered in  the countable Markov shift setting by Hanus, Mauldin and  Urba\'nski and \cite{HaMauUr} and by Iommi \cite{io1}.   For general piecewise continuous maps, analysis of this type was addressed in \cite{HoRaSt}.  For multimodal maps the multifractal analysis of pointwise dimension study began with the work of Todd \cite{T}.

\iffalse
\begin{rem}
The condition $\mu_\phi\neq\mu_{ac}$ required in the above theorem is to prevent the dimension spectrum being trivial.  One way that  $\mu_\phi$ and $\mu_{ac}$ could be equal would be if $\phi$ were cohomologous to $-\log|Df|-\int\log|Df|~d\mu_{ac}$, see for example \cite{BrHoNi} for definitions.  Arguing as in Corollary 3 of that paper, any solution $\psi$ to the cohomological equation $-\log|Df|-\int\log|Df|~d\mu_{ac}=\phi+ \psi-\psi\circ f$ must be H\"older, which, in the case of transitive unimodal maps, can only occur when the critical point is preperiodic.
\label{rmk:Livsic}
\end{rem}
\fi

As in \cite{T}, the main tool we use to prove our main results is a family of so-called \emph{inducing schemes}
(see Section~\ref{subsec:ind_sch}). These are dynamical systems associated to $f$ which on the one hand have better expansion and hyperbolicity properties but on the other are defined on a non-compact space.  We translate our problems to this setting, solve it there and then push the results back into the original system.  We use the fact that $f\in \F_{ac}$ to ensure that this process does not miss too many points.

\emph{The structure of the paper.} In Section~\ref{sec:prelim dim} we define the notions we will use from dimension theory.  In Section~\ref{sec:prelim erg} we define the ideas we need from thermodynamic formalism, introduce our inducing schemes and then discuss thermodynamic formalism for inducing schemes. In Section~\ref{sec:Lyap spec} we prove Theorem~\ref{thm:main Lyap}.  We give some basic ideas for the dimension spectrum in Section~\ref{sec:dim spec intro}.  We  set up the proof of Theorem~\ref{thm:main multi} in Section~\ref{sec:temp} and then prove the theorem in Section~\ref{sec:dspec proof}.

\emph{Acknowledgements.}  MT would like to thank the maths department of Pontificia Universidad Cat\'olica de Chile, where some of this work was carried out, for their hospitality.  Both authors would like to thank J. Rivera-Letelier for useful conversations and to Henk Briun and Neil Dobbs for useful remarks.

 %----------------------
\section{Preliminaries: dimension theory}\label{sec:prelim dim}
%----------------------
Here we recall  basic definitions and results from dimension theory (see \cite{Pesbook} and \cite{PrzUrb_book} for details). A countable collection of sets $\{U_i \}_{i\in N}$ is called a $\delta$-cover of $F \subset\R$ if $F\subset\bigcup_{i\in\N} U_i$, and $U_i$ has diameter $|U_i|$ at most $\delta$ for every $i\in\N$. Let $s>0$, we define
\[
\mathcal{H}^s(F) := \lim_{\delta \to 0}\inf \left\{ \sum_{i=1}^{\infty} |U_i|^s : \{U_i \}_i \text{ a } \delta\text{-cover of } F \right\}.
\]
The \emph{Hausdorff dimension} of the set $F$ is defined by
\[
{\dim_H}(F) := \inf \left\{ s>0 : \mathcal{H}^s(F) =0 \right\}.
\]
Given a finite Borel measure $\mu$ in $F$, the \emph{pointwise dimension} of $\mu$ at the point $x$ is defined by
\[
d_{\mu}(x) := \lim_{r \to 0} \frac{\log \mu (B(x,r))}{\log r},
\]
whenever the limit exists, where $B(x,r)$ is the ball at $x$ of radius~$r$.  This function describes the power law behaviour of
$\mu(B(x,r))$ as $r \rightarrow 0$, that is
\[\mu(B(x,r)) \sim r^{d_{\mu}(x)}.\]
The pointwise dimension quantifies how concentrated a measure is around a point: the larger it is  the less concentrated the measure is around that point.
Note that if $\mu$ is an atomic measure supported at the point $x_{0}$ then $d_{\mu}(x_{0})=0$ and if $x_{1} \neq x_{0}$ then $d_{\mu}(x_{1}) = \infty $.

The following propositions relating the pointwise dimension with the Hausdorff dimension can be found in \cite[Chapter 2 p.42]{Pesbook}.

\begin{prop}\label{prop:upper}
Given a finite Borel measure $\mu$, if $d_{\mu}(x) \le d$ for every $x \in F$, then ${\dim_H}(F)\le d$.
\end{prop}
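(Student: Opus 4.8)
The plan is to prove the standard mass distribution-type bound: if the pointwise dimension of a finite Borel measure $\mu$ is bounded above by $d$ on a set $F$, then $\dim_H(F) \le d$. Fix any $s > d$; I will show $\mathcal{H}^s(F) = 0$, which gives $\dim_H(F) \le s$, and since $s > d$ was arbitrary, $\dim_H(F) \le d$.

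First I would decompose $F$ according to the scale at which the pointwise-dimension estimate kicks in. For $x \in F$, since $d_\mu(x) \le d < s$, we have $\liminf_{r \to 0} \frac{\log \mu(B(x,r))}{\log r} \le d$, so there are arbitrarily small radii $r$ with $\log \mu(B(x,r)) \ge s \log r$, i.e. $\mu(B(x,r)) \ge r^s$ — wait, one must be careful with the direction of the inequality since $\log r < 0$. The correct reading: $d_\mu(x) \le d$ means that for infinitely many small $r$, $\frac{\log \mu(B(x,r))}{\log r}$ is close to (or below) $d$, hence less than $s$; multiplying by $\log r < 0$ reverses the inequality to give $\log \mu(B(x,r)) > s \log r$, i.e. $\mu(B(x,r)) > r^s$. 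So for each $x \in F$ there exist arbitrarily small $r > 0$ with $\mu(B(x,r)) > r^s$. Given $\delta > 0$, the collection $\mathcal{B}$ of all balls $B(x,r)$ with $x \in F$, $r < \delta$, and $\mu(B(x,r)) > r^s$ is then a cover of $F$ by sets of diameter at most $2\delta$.

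Next I would apply the Vitali covering lemma (in the $5r$-form, valid in $\R$) to extract a countable disjoint subfamily $\{B(x_i, r_i)\}_{i}$ such that $F \subset \bigcup_i B(x_i, 5 r_i)$. This disjointness is the crucial point: it lets us control $\sum_i r_i^s$ by $\mu$-mass, since $\sum_i r_i^s < \sum_i \mu(B(x_i, r_i)) \le \mu(\R) < \infty$ by disjointness. Then $\{B(x_i, 5r_i)\}_i$ is a $(10\delta)$-cover of $F$, and
\[
\sum_i |B(x_i, 5r_i)|^s \le \sum_i (10 r_i)^s = 10^s \sum_i r_i^s \le 10^s \mu(\R).
\]
This is a uniform bound, not yet zero, so a refinement is needed: instead one works inside $F \cap E$ for a sufficiently small piece, or better, one reruns the argument noting that one may take the radii so small that the balls $B(x_i,r_i)$ all lie in an open set $U \supset F$ with $\mu(U)$ as small as we like (using outer regularity of $\mu$ and that we may intersect $F$ with bounded pieces). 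Then $\sum_i r_i^s \le \mu(U)$, so $\inf\{\sum |U_i|^s : (10\delta)\text{-covers}\} \le 10^s \mu(U)$, and letting $\delta \to 0$ then shrinking $U$ gives $\mathcal{H}^s(F) \le 10^s \inf_U \mu(U)$. If $F$ has finite $\mu$-measure one can in fact take $U$ with $\mu(U)$ arbitrarily close to $\mu(F)$, which is not enough; the clean fix is to first reduce to the case $\mu(F) = 0$ or to localize — actually the simplest correct route is: the Vitali argument shows $\mathcal{H}^s_{2\delta}(F) \le 10^s \mu(F_\delta)$ where $F_\delta$ is the $\delta$-neighbourhood of $F$, and $\mu(F_\delta) \to \mu(\overline F)$, which still is not zero in general.

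The main obstacle, then, is getting the bound down to exactly zero rather than merely finite. The honest resolution is that this is a known lemma (cited here from \cite[Chapter 2 p.42]{Pesbook}), and the standard proof in fact proves the slightly stronger and cleaner statement via the following sharper local estimate: for each $x \in F$ one chooses, for every $\eps > 0$, a radius $r < \eps$ with $\mu(B(x,r)) \ge r^s$ AND with $B(x,r)$ contained in a fixed open $U \supset F$; running Vitali then yields $\mathcal{H}^s(F) \le 10^s \mu(U)$ for every open $U \supseteq F$, and by outer regularity $\inf_{U \supseteq F \text{ open}} \mu(U) = \mu(F) $ — so in fact one gets $\mathcal{H}^s(F) \le 10^s \mu(F)$, which combined with $s$ arbitrarily close to $d$ from above still only gives finiteness. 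Since the quoted statement asserts $\dim_H(F) \le d$, and $\dim_H(F) \le s$ whenever $\mathcal{H}^s(F) < \infty$ is false in general but $\mathcal{H}^s(F) < \infty$ does imply $\mathcal{H}^{s'}(F) = 0$ for all $s' > s$, we conclude: for every $s > d$ we have $\mathcal{H}^s(F) < \infty$, hence $\mathcal{H}^{s'}(F) = 0$ for all $s' > s > d$, hence $\dim_H(F) \le s'$ for all $s' > d$, hence $\dim_H(F) \le d$. This last logical step — upgrading finiteness at every level above $d$ to the dimension bound — is exactly what closes the argument, and I would present the proof in that order: localize via Vitali to get finiteness of $\mathcal{H}^s$ for all $s > d$, then invoke the elementary fact that finite $\mathcal{H}^s$ forces vanishing $\mathcal{H}^{s'}$ for $s' > s$.
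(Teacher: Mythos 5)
The paper does not supply a proof of this proposition; it simply cites Pesin's book (Chapter 2, p.\ 42). So there is no in-paper argument to compare against. Your Vitali-covering proof is a correct and entirely standard route, and it is essentially what one finds in the cited reference.

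That said, a large portion of your write-up is spent worrying about an obstacle that is not there. Once you have, for each $s>d$, a $\delta$-cover of $F$ by balls $B(x,r)$ with $\mu(B(x,r))>r^s$, and extract a disjoint Vitali subfamily $\{B(x_i,r_i)\}_i$ with $F\subset\bigcup_i B(x_i,5r_i)$, the estimate
\[
\mathcal{H}^s_{10\delta}(F)\ \le\ \sum_i (10r_i)^s\ \le\ 10^s\sum_i\mu(B(x_i,r_i))\ \le\ 10^s\mu(\R)
\]
already gives $\mathcal{H}^s(F)<\infty$, and that \emph{is enough}: finiteness of $\mathcal{H}^s(F)$ directly implies $\dim_H(F)\le s$, because $\mathcal{H}^s(F)<\infty$ forces $\mathcal{H}^{s'}(F)=0$ for every $s'>s$, whence $\dim_H(F)=\inf\{u:\mathcal{H}^u(F)=0\}\le s$. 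Your parenthetical claim that ``$\dim_H(F)\le s$ whenever $\mathcal{H}^s(F)<\infty$ is false in general'' is itself incorrect; you may have had in mind the (true) fact that $\dim_H(F)\le s$ does not imply $\mathcal{H}^s(F)<\infty$. The localization detour via open sets $U\supset F$ and $\mu(F_\delta)$ is therefore unnecessary. Your concluding two-step argument (finiteness for all $s>d$ $\Rightarrow$ vanishing for all $s'>s$ $\Rightarrow$ $\dim_H(F)\le d$) does arrive at the right place, so the proof as written is ultimately valid, just more tortuous than it needs to be.
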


The \emph{Hausdorff dimension} of the measure $\mu$ is defined by
\[
{\dim_H}(\mu) := \inf \left\{ {\dim_H}(Z): \mu(Z)=1 \right\}.
\]

\begin{prop}\label{prop:lower}
Given a finite Borel measure $\mu$, if $d_{\mu}(x) =d$ for $\mu$-almost every $x \in F$, then ${\dim_H}(\mu)=d$.
\end{prop}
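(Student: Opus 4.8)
The plan is to prove the two inequalities $\dim_H(\mu)\le d$ and $\dim_H(\mu)\ge d$ separately. We may assume $\mu$ is a probability measure with $\mu(F)=1$; setting $F':=\{x\in F:d_\mu(x)=d\}$, the hypothesis gives $\mu(F')=1$.

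For the upper bound, fix $\eps>0$. Every $x\in F'$ satisfies $d_\mu(x)=d\le d+\eps$, so Proposition~\ref{prop:upper}, applied to the set $F'$ with the number $d+\eps$, gives $\dim_H(F')\le d+\eps$. Since $\mu(F')=1$, the set $F'$ is admissible in the infimum defining $\dim_H(\mu)$, and therefore $\dim_H(\mu)\le\dim_H(F')\le d+\eps$. Letting $\eps\to0$ gives $\dim_H(\mu)\le d$.

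For the lower bound it suffices to prove that $\dim_H(Z)\ge d$ for every Borel set $Z$ with $\mu(Z)>0$, since any set of full $\mu$-measure has positive measure. Fix such a $Z$ and an $\eps\in(0,d)$. For $x\in F'$ we have $\liminf_{r\to0}\frac{\log\mu(B(x,r))}{\log r}=d>d-\eps$, so there is $n(x)\in\N$ with $\mu(B(x,r))\le r^{d-\eps}$ for all $0<r<1/n(x)$. Defining
\[
Z_n:=\left\{x\in Z\cap F':\ \mu(B(x,r))\le r^{d-\eps}\ \text{for all}\ 0<r<1/n\right\},
\]
the sets $Z_n$ increase with $n$ and $\bigcup_n Z_n=Z\cap F'$, hence $\mu(Z_n)\uparrow\mu(Z\cap F')=\mu(Z)>0$ and we may fix $n$ with $\mu(Z_n)>0$. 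I would then apply the mass distribution principle to $Z_n$: given any $\delta$-cover $\{U_i\}_i$ of $Z_n$ with $2\delta<1/n$, discard the $U_i$ disjoint from $Z_n$, pick $x_i\in U_i\cap Z_n$, and note $U_i\subset B(x_i,2|U_i|)$; then
\[
0<\mu(Z_n)\le\sum_i\mu\big(B(x_i,2|U_i|)\big)\le\sum_i(2|U_i|)^{d-\eps}=2^{d-\eps}\sum_i|U_i|^{d-\eps}.
\]
Taking the infimum over such covers and letting $\delta\to0$ yields $\mathcal{H}^{d-\eps}(Z_n)\ge 2^{-(d-\eps)}\mu(Z_n)>0$, so $\dim_H(Z)\ge\dim_H(Z_n)\ge d-\eps$. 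Letting $\eps\to0$ completes the lower bound, and hence the proof.

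The only substantive point is the mass distribution principle invoked in the last step, namely converting the local upper bound $\mu(B(x,r))\le r^{d-\eps}$ valid on $Z_n$ into a positive lower bound for $\mathcal{H}^{d-\eps}(Z_n)$; on the real line this is precisely the elementary covering estimate displayed above, resting on the fact that a set of diameter at most $\delta$ sits inside a ball of radius $2\delta$. The remaining ingredients — normalisation, the reduction to sets of positive measure, the exhaustion $Z_n\uparrow Z\cap F'$ together with continuity of $\mu$, and the appeal to Proposition~\ref{prop:upper} for the upper bound — are routine. Both halves of the argument are classical and are also recorded in \cite[Chapter 2]{Pesbook}.
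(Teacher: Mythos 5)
Your proof is correct and is precisely the standard argument: Proposition~\ref{prop:upper} for the upper bound, and the elementary covering (mass‑distribution) estimate combined with the exhaustion $Z_n\uparrow Z\cap F'$ for the lower bound. The paper itself gives no proof for this proposition, simply citing \cite[Chapter 2 p.42]{Pesbook}, and the argument recorded there is essentially the one you have written out.
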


In this paper we will be interested in several types of multifractal spectra. In order to give a unified definition of the objects and of the problem we will present the general concept of multifractal analysis as developed by Barreira, Pesin and Schmeling \cite{BarPeSc} (see also \cite[Chapter 7]{Bar}).

Consider a function $g:Y \to [-\infty, +\infty]$, where $Y$ is a subset of the space $X$.
The level sets induced by the function $g$ are defined by
\[K_g(\alpha) = \left\{ x \in Y : g(x)= \alpha \right\}. \]
Since they are pairwise disjoint they induce the \emph{multifractal decomposition}
\[X = \left( X \setminus Y \right) \cup \bigcup_{\alpha \in [-\infty, +\infty]} K_g(\alpha) . \]
Let $G$ be a real function defined on the set of subsets of $X$. The \emph{multifractal spectrum} $\mathcal{S}:[-\infty, + \infty] \to \mathbb{R}$ is the function that encodes the decomposition given by $g$ by means of the function $G$, that is
\[\mathcal{S}(\alpha) = G(K_g(\alpha) ). \]
We stress that in this definition no dynamical system is involved. The functions $g$ that we will consider are related to the dynamics of a certain systems and are, in general, only measurable functions. Hence, the multifractal decomposition is rather complicated.
Given a multimodal map $f:I \to I$ (our dynamical system) the functions $g$ that we will consider in this paper are:

\begin{enumerate}
\item The \emph{Lyapunov exponent}, that is the function defined by
\[\lambda(x)= \lim_{n \to \infty} \frac{1}{n} \log |Df^n(x)|,\]
whenever the limit exits.

\item The \emph{pointwise dimension} of an equilibrium state $\mu$.
\end{enumerate}

The function $G$ we will consider here is the Hausdorff dimension.
%\emph{Hausdorff dimension} of the set $A$, that we denote by $\dim_H(A)$.
Note that we could also use entropy as a way of measuring the size of sets.

\iffalse
We will also consider a function $G$ on the set of subsets of $X$:
\begin{enumerate}
\item The \emph{Hausdorff dimension} of the set $A$, that we denote by $\dim_H(A)$.
\end{enumerate}
The aim of this note is to describe different types of multifractal spectra. To determine the regularity of the spectra and to construct invariant measures supporting the level sets. In order to do so, we will relate the multifractal spectrum with the thermodynamic formalism.
\fi

%----------------------
\section{Preliminaries: thermodynamic formalism and inducing schemes}\label{sec:prelim erg}
%----------------------

In this section we will introduce some ideas from thermodynamic formalism.  Then we will discuss inducing schemes, and finally we bring these together in thermodynamic formalism for countable Markov shifts.

\subsection{Thermodynamic formalism} \label{sec:therm form}
Let $f$ be a  map of a metric space, denote by $\mathcal{M}_f$ the set of $f-$invariant probability measures. Let $\phi: I \to [-\infty, \infty]$ be a  \emph{potential}. The \emph{topological pressure} of $\phi$ with respect to $f$ is defined by
\begin{equation*}
P_f(\phi)=P(\phi) = \sup \left\{ h(\mu) + \int \phi \ d\mu :  \mu \in \mathcal{M}_f \textrm{ and } - \int \phi \ d\mu < \infty\right\},
\end{equation*}
where $ h(\mu)$ denotes the measure theoretic entropy of $f$ with respect to $\mu$.
The pressure function $t \to P(t \phi)$ is convex (see \cite[Chapter 9]{Walbook} and \cite{Kellbook} for this and other properties of the pressure).

A measure $\mu_{\phi} \in  \mathcal{M}_f$ is called an \emph{equilibrium state} for $\phi$ if it satisfies:
\[ P(\phi) = h(\mu_{\phi}) + \int \phi \ d\mu_{\phi}. \]

The following results regarding existence and uniqueness of equilibrium states and the regularity of the pressure function were proved in \cite{IT}.

\begin{teo} \label{thm:ITeq_exist_unique}
Let $f\in \F$.  Then there exists $t^+ \in (0, + \infty]$ such that if $t \in (-\infty , t^+)$ there exists a unique equilibrium measure $\mu_t$ for the potential $-t \log |Df|$. Moreover, the measure $\mu_t$ has positive Lyapunov exponent.
\end{teo}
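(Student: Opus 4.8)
\textbf{Proof plan for Theorem~\ref{thm:ITeq_exist_unique}.}

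The plan is to transfer the problem to the inducing schemes introduced in Section~\ref{subsec:ind_sch} and to invoke the thermodynamic formalism for countable Markov shifts. First I would recall that for maps $f\in\F$ there is a family of inducing schemes $(X,F,\tau)$, where $F=f^\tau$ is a full-branched (or Markov) map over a countable partition of a subinterval $X\subset I$, and such that $F$ is uniformly expanding with bounded distortion. The key point is that, by the general theory of liftable measures and Abramov's formula, for a suitable choice of the inducing scheme every $f$-invariant measure $\mu$ with $\lambda(\mu)>0$ and $h(\mu)>0$ lifts to an $F$-invariant measure $\bar\mu$ with $\int\tau\,d\bar\mu<\infty$, and conversely projections of such $\bar\mu$ are $f$-invariant; moreover $h(\mu)+\int(-t\log|Df|)\,d\mu = \frac{1}{\int\tau\,d\bar\mu}\left(h(\bar\mu)+\int(-t\log|DF|)\,d\bar\mu\right)$, where $\log|DF|=\sum_{k=0}^{\tau-1}\log|Df|\circ f^k$ is the induced potential. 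Hence an equilibrium state for $-t\log|Df|$ with positive entropy and positive Lyapunov exponent corresponds to an equilibrium state for the induced potential $\Phi_t:=-t\log|DF|$ in the Markov shift, relative to the \emph{induced pressure}, and the relevant condition is $P_f(-t\log|Df|)=0$ being replaced by the condition that the induced pressure of $\Phi_t - P_f(-t\log|Df|)\tau$ vanishes.

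The core of the argument is then to show two things for $t<t^+$. First, that the induced potential $\Phi_t$ (suitably normalized by the return time) has an equilibrium state in the countable Markov shift: here I would check that $\Phi_t$ is locally H\"older (this uses the bounded distortion of $F$, which follows from the negative Schwarzian condition and the $C^{1+}$ hypothesis), has summable variations, and satisfies the finiteness condition $P(\Phi_t-P_f(-t\log|Df|)\tau)<\infty$ together with positive recurrence; the Sarig/Mauldin--Urba\'nski theory then gives a unique Ruelle--Perron--Frobenius measure, which is the unique equilibrium state in the induced system. The threshold $t^+$ is precisely the place where this summability/recurrence breaks down — this is exactly the first order phase transition described in the introduction and established in \cite{IT}. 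Second, that this unique induced equilibrium state projects to an $f$-invariant probability measure with $\int\tau\,d\bar\mu<\infty$ (finite expected return time), which is what makes the projection a probability measure and not just $\sigma$-finite; this finiteness is again where $t<t^+$ is used.

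For uniqueness in the original system I would argue that any equilibrium state $\mu_t$ for $-t\log|Df|$ must have positive Lyapunov exponent — since $t<t^+$, by the results of \cite{IT} the pressure is strictly above the linear part $t\mapsto -t\chi_{\inf}$ determined by measures of zero exponent, so any measure of zero exponent is not an equilibrium state — and positive entropy, hence it is liftable to the inducing scheme (using $f\in\F$ and the fact that these inducing schemes see a full-measure set for such measures), so it corresponds to the unique induced equilibrium state, and uniqueness follows. The statement that $\mu_t$ has positive Lyapunov exponent is then automatic. I expect the main obstacle to be the liftability step together with verifying that the inducing scheme is rich enough that \emph{every} equilibrium state (and not merely some distinguished one) projects from the induced system; this is precisely the kind of argument carried out in \cite{IT} and in \cite{BTeqgen, PesSen}, and I would cite those results rather than reprove the combinatorial construction of the inducing schemes. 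The distortion estimates and the identification of the threshold $t^+$ are technical but standard once the inducing scheme is in place.
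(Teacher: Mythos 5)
Note that this theorem is not proved in this paper: it is quoted as a result from \cite{IT}, the authors' companion work, and the paper's later sketch of Theorem~\ref{thm:unique mu_q} explicitly says that argument ``goes along the same lines as the proof of Theorem~\ref{thm:ITeq_exist_unique} given in \cite{IT}.'' Your plan --- pass to an inducing scheme, apply Sarig/Mauldin--Urba\'nski to the normalized induced potential $\Phi_t - P(-t\log|Df|)\tau$ to obtain the unique Gibbs (hence equilibrium) measure on the countable full shift, verify integrability of the inducing time so that the projection is an $f$-invariant probability, and use $t<t^+$ to force any equilibrium state to have positive entropy and Lyapunov exponent and therefore to be liftable --- is exactly that route, so your approach matches the one the paper relies on.
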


We define the pressure function
$$p(t):=P(-t\log|Df|).$$

\begin{teo}
Let $f\in \F$.  Then  for $t^+$ as in Theorem~\ref{thm:ITeq_exist_unique}, if $t \in (-\infty , t^+)$ then the pressure function $t\mapsto p(t)$ is strictly convex and $C^1$.
\label{thm:IT press conv}
\end{teo}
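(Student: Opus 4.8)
The plan is to deduce strict convexity and $C^1$ regularity of $p(t)=P(-t\log|Df|)$ on $(-\infty,t^+)$ from the existence and uniqueness of equilibrium states (Theorem~\ref{thm:ITeq_exist_unique}) together with standard convex-analysis facts about the pressure. First recall that $t\mapsto p(t)$ is convex, being a supremum of the affine functions $t\mapsto h(\mu)-t\int\log|Df|\,d\mu$ over $\mu\in\M_f$ with $\int\log|Df|\,d\mu<\infty$. A convex function on an interval is automatically differentiable outside a countable set, and at any point $t$ the one-sided derivatives $D^-p(t)$ and $D^+p(t)$ exist with $D^-p(t)\le D^+p(t)$; moreover the slopes $-\int\log|Df|\,d\mu_t$ of the affine functions realised by equilibrium states at $t$ must lie in the interval $[D^-p(t),D^+p(t)]$.

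The key step is then the following standard dictionary between equilibrium states and supporting lines: if $\mu$ is an equilibrium state for $-t\log|Df|$, then $t'\mapsto h(\mu)-t'\int\log|Df|\,d\mu$ is an affine function lying below $p$ and touching it at $t$, hence $-\int\log|Df|\,d\mu\in[D^-p(t),D^+p(t)]$; conversely any element of $[D^-p(t),D^+p(t)]$ that is realised as such a slope comes from an equilibrium state (by taking a sequence of measures whose free energies approach the pressure and whose Lyapunov exponents approach the prescribed slope, and using upper semicontinuity/compactness arguments from \cite{IT}). Given this, $C^1$-ness on $(-\infty,t^+)$ follows: at a hypothetical point of non-differentiability $t_0<t^+$, every slope in the nondegenerate interval $(D^-p(t_0),D^+p(t_0))$ would be realised by an equilibrium state for $-t_0\log|Df|$, contradicting the uniqueness asserted in Theorem~\ref{thm:ITeq_exist_unique}. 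Strict convexity follows similarly: if $p$ were affine on some subinterval $[t_1,t_2]\subset(-\infty,t^+)$, then the common equilibrium states $\mu_{t_1}$ and $\mu_{t_2}$ would both realise the same supporting line over $[t_1,t_2]$; but $\mu_{t_1}$ is an equilibrium state for $-t\log|Df|$ for every $t$ in that subinterval (since it attains the affine upper bound which equals $p$ there), and likewise $\mu_{t_2}$, so by uniqueness $\mu_{t_1}=\mu_{t_2}=\mu_t$ throughout; but then $\int\log|Df|\,d\mu_{t_1}=\int\log|Df|\,d\mu_{t_2}$ forces, via the entropy identity $p(t)=h(\mu_t)-t\int\log|Df|\,d\mu_t$ differentiated appropriately, that the two measures differ — more directly, a non-strictly-convex stretch would require two distinct ergodic equilibrium states at some interior point, again contradicting uniqueness.

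The main obstacle I expect is the converse direction of the dictionary, namely that every slope strictly between the one-sided derivatives is actually attained by a genuine equilibrium state rather than only approached by a sequence of measures that may escape to the boundary of the space of measures (the class $\F_{ac}$ is non-uniformly hyperbolic, and measures with Lyapunov exponent close to $0$ may fail to be equilibrium states). This is where the real work of \cite{IT} enters: one needs the inducing-scheme machinery to guarantee that on $(-\infty,t^+)$ the relevant equilibrium states exist, have positive Lyapunov exponent, and vary in such a way that the pressure is governed by them. So the proof is essentially a citation-plus-convex-analysis argument: combine the abstract convexity of $p$, the structure of supporting lines, and Theorem~\ref{thm:ITeq_exist_unique}'s existence and uniqueness to rule out both corners (non-$C^1$ points) and flat pieces (non-strict convexity) in the range $(-\infty,t^+)$.

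A cleaner alternative, which I would actually prefer to present, is to transfer the question to the induced system: on the inducing scheme the induced pressure function is real-analytic and strictly convex in the relevant parameter range by the thermodynamic formalism for countable Markov shifts (Sarig's theory), and $p(t)$ is recovered as the value making the induced pressure vanish, i.e. implicitly via $P_{\mathrm{ind}}(t)=0$; the implicit function theorem and the strict convexity/analyticity of $P_{\mathrm{ind}}$ then yield that $p$ is $C^1$ (indeed better) and strictly convex on $(-\infty,t^+)$, with the endpoint $t^+$ being precisely where this mechanism breaks down. I would cite \cite{IT} for the precise statements about the induced pressure and merely indicate this implicit-function-theorem step, since the detailed estimates are carried out there.
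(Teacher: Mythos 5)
This theorem is not proved in the present paper: the text states explicitly that it, together with Theorem~\ref{thm:ITeq_exist_unique}, was established in \cite{IT}, and the statement is quoted without proof. So you were in effect asked to reconstruct the argument of \cite{IT}; your second, preferred route --- transfer to an inducing scheme, invoke real analyticity of the Gurevich pressure on the countable full shift (Sarig), recover $p(t)$ implicitly from $P_{\mathrm{ind}}(-t\log|DF|-p(t)\tau)=0$, and apply the implicit function theorem --- is indeed the strategy of \cite{IT}, and it is the correct one.

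Your first route, however, contains a genuine error in the strict-convexity step. Uniqueness of the equilibrium state at each $t$ does \emph{not} rule out an affine stretch of $p$: if $p(t)=a-bt$ on $[t_1,t_2]$, then a single ergodic measure $\mu$ with $h(\mu)=a$ and $\lambda(\mu)=b$ can perfectly well be the unique equilibrium state for every $t$ in that interval simultaneously, and no contradiction with Theorem~\ref{thm:ITeq_exist_unique} arises. Indeed your own chain of reasoning deduces $\mu_{t_1}=\mu_{t_2}$ and then asserts that equal Lyapunov exponents ``force the two measures to differ,'' which is a non sequitur; the fallback claim that an affine stretch requires two distinct ergodic equilibrium states at an interior point is simply false. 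Strict convexity has to be obtained from something sharper --- in the inducing-scheme picture, from positivity of the variance of $\log|DF|$ with respect to the induced Gibbs state (equivalently, the induced geometric potential not being cohomologous to a constant plus a multiple of $\tau$), a fact you do not mention but which is the missing ingredient and which plugs naturally into your second route. The $C^1$ part of the first route is in the right spirit (a corner would produce two distinct equilibrium states as weak-$^*$ limits from the left and from the right), but, as you correctly flag, turning those limits into genuine equilibrium states at $t_0$ requires upper semicontinuity of entropy and control of $\lambda(\cdot)$ under weak-$^*$ convergence, which are not free for maps with critical points; so you end up needing the inducing machinery of \cite{IT} anyway, and might as well use it from the start.
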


For future use, for $\mu\in \M_f$ we define the Lyapunov exponent of $\mu$ as
$$\lambda(\mu)=\lambda_f(\mu):=\int\log|Df|~d\mu.$$

\iffalse
A similar result for a large family of potentials, that includes the Holder class, was also proven.  Consider the class of potentials that have good liftability properties,
\begin{equation}
\mathcal{P}:= \left\{ \phi :I \to \mathbb{R}: \phi \text{ is upper-semi continuous }, P(\phi)=0  \text{ and }   \Phi \text{ is such that } \sum_nV_n(\Phi)<\infty \text{ for all inducing schemes as in Theorem~\ref{thm:schemes}} \right\},
\end{equation}
As in \cite[Lemma 3]{BTeqgen}, all H\"older potentials with $P(\phi)=0$ lie in this class.

Let $\phi\in \mathcal{P}$ and denote by
\begin{eqnarray*}
 \lambda_m(\phi) = \inf \left\{ \int \phi \ d \mu : \mu \in \mathcal{M} \right\} ;
\lambda_M(\phi) = \sup \left\{ \int \phi \ d \mu : \mu \in \mathcal{M} \right\}.
 \end{eqnarray*}
The following result was proven in \cite{IT},
\begin{teo} \label{thm:eq cts pots}
Let $\phi\in \mathcal{P}$   and
$\lambda \in (\lambda_m(\phi), \lambda_M(\phi))$ then there exists $t_{\lambda} \in (t^-, t^+)$  such that the potential $t_{\lambda} \phi$ has a unique equilibrium measure $\mu$. Moreover,  $\int \phi \ d \mu = \lambda$.
 \end{teo}
\fi

\subsection{Inducing schemes} \label{subsec:ind_sch}
A strategy used to study multimodal maps $f \in \mathcal{F}$, considering that they lack Markov structure and expansiveness, is to consider a generalisation of the first return map. These maps are expanding and are Markov (although over a countable alphabet). The price one has to pay is to loss compactness. The idea is to study the inducing scheme and then to translate the results into the original system.

We say that $(X,F,\tau)$ is an \emph{inducing scheme} for $(I,f)$ if
\begin{list}{$\bullet$}{\itemsep 0.2mm \topsep 0.2mm \itemindent -0mm \leftmargin=5mm}
\item $X$ is an interval containing a finite or countable
collection of disjoint intervals $X_i$ \st $F$ maps each $X_i$
diffeomorphically onto $X$, with bounded distortion (i.e. there
exists $K>0$ so that for all $i$ and $x,y\in X_i$, $1/K\le DF(x)/DF(y) \le K$);
\item $\tau|_{X_i} = \tau_i$ for some $\tau_i \in \N$ and $F|_{X_i} = f^{\tau_i}$.  If $x \notin \cup_iX_i$ then $\tau(x)=\infty$.
\end{list}
The function $\tau:\cup_i X_i \to \N$ is called the {\em inducing time}. It may
happen that $\tau(x)$ is the first return time of $x$ to $X$, but
that is certainly not the general case.  For ease of notation, we will frequently write $(X,F)=(X,F,\tau)$.  We denote the set of points $x\in I$ for which there exists $k\in \N$ such that $\tau(F^n(f^k(x)))<\infty$ for all $n\in \N$ by $(X,F)^\infty$.

Given $(I, f)$ and a potential $\phi$, the next definition gives us the relevant potentials for an inducing scheme for $f$.

\begin{defi}
Let $(X, F, \tau)$ be an inducing scheme for the map $f$.  Then for a potential $\phi:I\to \R$,  the induced potential $\Phi$ for $(X,F, \tau)$ is given by $$\Phi(x)=\Phi^F(x):=\phi(x)+\cdots +\phi\circ f^{\tau(x)-1}(x).$$
\end{defi}

Note that in particular for the  geometric potential $\log|Df|$, the induced potential for a scheme $(X,F)$ is $\log|DF|$.

Given an inducing scheme $(X,F, \tau)$, we say that a measure $\mu_F$ is a \emph{lift} of $\mu$ if for all $\mu$-measurable subsets $A\subset I$,
\begin{equation} \mu(A) = \frac1{\int_X \tau \ d\mu_F} \sum_i \sum_{k = 0}^{\tau_i-1} \mu_F( X_i \cap f^{-k}(A)). \label{eq:lift}
\end{equation}
Conversely, given a measure $\mu_F$ for $(X,F)$, we say that
$\mu_F$ \emph{projects} to $\mu$ if \eqref{eq:lift} holds.
We call a measure
$\mu$  \emph{compatible to} the inducing scheme $(X,F,\tau)$ if

\begin{list}{$\bullet$}{\itemsep 1.0mm \topsep 0.0mm \leftmargin=5mm}
\item $\mu(X)> 0$ and $\mu\left(X \setminus (X,F)^\infty\right) = 0$; and
\item there exists a measure $\mu_F$ which projects to $\mu$ by
\eqref{eq:lift}: in particular $\int_X \tau \ d\mu_F <
\infty$.
\end{list}

If the $f-$invariant measure $\mu$ has positive Lyapunov exponent then it can be lifted to an $F-$invariant measure $\mu_F$.

The following result, proved in \cite{T} (see also \cite{BTeqnat}) which we use frequently in the rest of the paper, provides our family of inducing schemes. We require the definition $\overline\lambda(x):= \limsup_{n \to \infty} \frac{1}{n} \log |Df^n(x)|$.

\begin{teo}\label{thm:schemes}
Let $f\in \F$.  There exist a countable collection $\{(X^n,F_n)\}_n$ of inducing schemes with $\bd X^n \notin (X^n,F_n)^\infty$ such that:
\newcounter{Mcount}
\begin{list}{\alph{Mcount})}{\usecounter{Mcount} \itemsep 1.0mm \topsep 0.0mm \leftmargin=5mm}
\item any ergodic invariant probability measure $\mu$ with $\lambda(\mu)>0$ is compatible with one of the inducing schemes $(X^n, F_n)$.  In particular there exists and ergodic $F_n$-invariant probability measure $\mu_{F_n}$ which projects to $\mu$ as in \eqref{eq:lift};
\item any equilibrium state for $-t\log|Df|$ where $t\in \R$ with $\lambda(\mu)>0$ is compatible with all inducing schemes $(X^n, F_n)$.
\item if $f\in \F_{ac}$ then for all $\eps>0$ there exists $N\in \N$ such that $$dim_H\Big(\left\{x\in I:\overline\lambda(x)>0\right\}\sm\left(\cup_{n=1}^N(X^n, F_n)^\infty\right) \Big)<\eps.$$
\end{list}
\end{teo}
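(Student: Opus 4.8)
The plan is to construct the inducing schemes via the standard Hofbauer-tower / first-return constructions developed for $C^{1+}$ multimodal maps with negative Schwarzian derivative, and then verify the three properties in turn.

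First I would recall the construction of a single inducing scheme over a well-chosen base interval. Fix a small interval $Y$ compactly contained in $I\sm\orb(\Crit)$ and use the topological transitivity of $f$ together with the negative Schwarzian condition (which, via the Koebe principle, gives uniformly bounded distortion for branches with definite Koebe space) to build the first-return map, or more precisely a ``first-return-like'' map: one takes the connected components $Y_i$ of the domain of the first return to $Y$ on which the return branch extends, with Koebe space, to a definite neighbourhood of $Y$. The negative Schwarzian/Koebe machinery yields the bounded-distortion constant $K$ uniformly in $i$, and the fact that $f$ has no parabolic points (noted after the definition of $\F$) is what makes these branches uniformly expanding at the level of the induced map $F=f^\tau$. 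To get a countable \emph{collection} $\{(X^n,F_n)\}_n$ rather than a single scheme, one lets the base intervals $X^n$ range over a sequence of intervals shrinking toward, and suitably positioned relative to, the points of $\orb(\Crit)$ and the dynamics, so that every measure of positive Lyapunov exponent eventually has a typical point whose orbit enters and returns infinitely often to some $X^n$ with the good Koebe property; arranging $\bd X^n\notin(X^n,F_n)^\infty$ is a matter of choosing the endpoints generically (e.g.\ non-precritical, not in the orbit of the base). This is the part of the argument that is essentially quoted from \cite{T} and \cite{BTeqnat}, so I would cite those constructions rather than reproduce them.

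For part~(a): given an ergodic $\mu$ with $\lambda(\mu)>0$, by the ergodic theorem $\mu$-a.e.\ $x$ has $\lambda(x)=\lambda(\mu)>0$, and Pliss-type / hyperbolic-times arguments (again available because of negative Schwarzian and the Hölder derivative) show that a positive-measure set of points makes infinitely many ``good'' returns to one of the base intervals $X^n$ with uniformly bounded distortion along the return. One then checks $\mu(X^n)>0$ and $\mu(X^n\sm(X^n,F_n)^\infty)=0$, and that the induced map admits an $F_n$-invariant $\mu_{F_n}$ projecting to $\mu$ via \eqref{eq:lift} with $\int\tau\,d\mu_{F_n}<\infty$ (this last integrability is exactly Kac's formula, finite because $\mu(X^n)>0$ and $\mu$ is a probability measure); this is the content of ``$\mu$ is compatible with $(X^n,F_n)$''. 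Part~(b) is stronger: an equilibrium state $\mu_t$ for $-t\log|Df|$ with $\lambda(\mu_t)>0$ must be compatible with \emph{all} the schemes. Here I would invoke the fact (from \cite{IT}, Theorem~\ref{thm:ITeq_exist_unique}) that such $\mu_t$ is the unique equilibrium state and in particular is an acip-like measure with full topological support / exact dimension and positive entropy, so its typical points visit every base interval $X^n$ infinitely often with good returns; alternatively one quotes that $\mu_t$ is liftable to every inducing scheme of this family, which is precisely how it is used in \cite{IT, T}.

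Part~(c) is where the hypothesis $f\in\F_{ac}$ enters and is, I expect, the main obstacle. The set $\{x:\overline\lambda(x)>0\}$ is not exhausted by finitely many $(X^n,F_n)^\infty$ on the nose; one must show the \emph{dimensional} leftover is arbitrarily small. The strategy: decompose $\{x:\overline\lambda(x)>0\}$ according to the size of the upper Lyapunov exponent and the frequency of good returns. Points with $\overline\lambda(x)>0$ have, along a subsequence, exponential expansion, hence infinitely many hyperbolic times; the standard argument shows that outside a set that is eventually captured by some finite union of the $(X^n,F_n)^\infty$, a point must spend an asymptotically vanishing (or at least small) fraction of time making good returns, which forces the orbit to linger near $\orb(\Crit)$. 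One then uses the acip $\mu_{ac}$ with $\frac{d\mu_{ac}}{dm}\in L^p(m)$, $p>1$: the $L^p$-bound on the density gives, by Hölder's inequality, a uniform modulus of absolute continuity of $\mu_{ac}$ with respect to Lebesgue ($m(A)$ small $\Rightarrow \mu_{ac}(A)$ small with a power $1-1/p$), so Lebesgue-null/small sets are $\mu_{ac}$-small and conversely sets of full $\mu_{ac}$-measure have full Lebesgue measure; combined with the fact that $\mu_{ac}$ lifts to (is compatible with) the inducing schemes — so $\mu_{ac}\big(\cup_{n=1}^N(X^n,F_n)^\infty\big)\to 1$ as $N\to\infty$ — this upgrades a measure statement into the Hausdorff-dimension statement, because a set carrying no mass for a measure equivalent (with controlled densities) to Lebesgue, and lying in an interval, has dimension strictly less than $1$, with the deficiency controlled by how much $\mu_{ac}$-mass is missed. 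Quantifying ``$\dim_H<\eps$'' rather than merely ``$\dim_H<1$'' requires the $L^p$ integrability: one covers the leftover set efficiently using the Koebe distortion of the (finitely many, hence uniformly good) schemes $(X^1,F_1),\dots,(X^N,F_N)$ to produce a cover by cylinder intervals whose $s$-dimensional sum is controlled by a pressure-type quantity that tends to $0$ as $N\to\infty$ and $s\to 0^+$; the $L^p$ hypothesis is what makes this pressure estimate summable. I would carry this out by first fixing $\eps$, choosing $s=\eps$, then choosing $N$ so large that the $\mu_{ac}$-mass (equivalently, by the $L^p$ modulus of continuity, the Lebesgue measure) of the complement of $\cup_{n\le N}(X^n,F_n)^\infty$ inside $\{\overline\lambda>0\}$ is tiny, and finally converting that into an $\mathcal H^\eps$-estimate via the bounded-distortion covers. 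The delicate point throughout is handling points whose orbits approach $\Crit$ infinitely often yet still have $\overline\lambda>0$: these are exactly the points not seen by any fixed finite family, and controlling their dimension is the crux of~(c).
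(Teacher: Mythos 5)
The paper does not prove Theorem~\ref{thm:schemes}; it quotes it directly from \cite{T} and \cite{BTeqnat}, so the ``paper's own proof'' is a citation to the Hofbauer-tower constructions in those papers.  Your sketch is therefore really a reconstruction of that external proof, and it departs from it in a way worth noting: you build schemes from first-return maps with Koebe space and justify liftability via Pliss/hyperbolic times, whereas \cite{BTeqnat} builds the schemes as projections of a canonical Markov structure on the Hofbauer tower $(\hat I,\hat f)$ and obtains compatibility by lifting measures of positive Lyapunov exponent to the tower (a Keller-type statement), then inducing on carefully chosen tower pieces.  Your route is plausible in spirit and would likely deliver (a) and (b) with some work, but it is not the construction the cited theorem actually rests on, and the claim in (b) that an equilibrium state is compatible with \emph{all} the schemes is tied quite specifically to how the base intervals $X^n$ are chosen in the tower; a generic first-return construction would not automatically give this.

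For part (c) there is a genuine gap. You write that ``a set carrying no mass for a measure equivalent (with controlled densities) to Lebesgue, and lying in an interval, has dimension strictly less than $1$, with the deficiency controlled by how much $\mu_{ac}$-mass is missed.''  This is false as stated: Lebesgue-null sets can have full Hausdorff dimension, and more generally small $m$- or $\mu_{ac}$-measure gives no bound on dimension by itself.  You do recognise that a covering estimate is needed, but the passage ``the $s$-dimensional sum is controlled by a pressure-type quantity that tends to $0$ \dots the $L^p$ hypothesis is what makes this pressure estimate summable'' is exactly the step that has to be proved, and you leave it as a black box.  The actual mechanism in \cite{T}/\cite{BTeqnat} is that $\frac{d\mu_{ac}}{dm}\in L^p$, $p>1$, forces a polynomial tail $\mu_{ac}\{\tau>n\}\lesssim n^{-p}$ for the inducing time (via \cite{BRSS}), and this tail bound is what makes the sum $\sum_i |X_i|^s$ over deep cylinders small once finitely many branches are retained; the $L^p$ hypothesis enters through the return-time statistics, not merely as a modulus of absolute continuity of $\mu_{ac}$ with respect to $m$.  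Without that quantitative link from the density exponent to the cylinder-size/pressure estimate, the reduction of $\dim_H$ below $\eps$ does not follow from what you have written.
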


\iffalse
\begin{rem}
The condition $f\in \F_{ac}$ is very general.  Indeed it holds for any map $f\in \F$ such that $|Df^n(c)|\to \infty$ as $n\to \infty$ for all $c\in \crit$, see \cite{BRSS}.
\end{rem}
\fi

\begin{rem}
In principle $\dim_H\left\{x\in I:\overline\lambda(x)\le0\right\}$ could be positive.  However, as in \cite[Corollary B]{T}, if $f$ is a map with exponential growth along critical orbits then $\overline\lambda(x)>0$ for all $x\in I$.
\end{rem}

If $(X, F, \tau)$ is an inducing scheme for the map $f$ with $\bd X\notin (X,F)^\infty$, then the system $F:(X,F)^\infty \to (X,F)^\infty$ is topologically conjugated to the full-shift on a countable alphabet. Hence we can transfer our study to those shifts.  We explain this in the next subsection.

\subsection{Countable Markov shifts} \label{countable}
Let $\sigma \colon \Sigma \to \Sigma$ be a one-sided Markov shift
with a countable alphabet $S$. We equip $\Sigma$ with the
topology generated by the cylinder sets
\[ C_{i_0 \cdots i_n}= \{x \in
\Sigma : x_j=i_j \text{ for } 0 \le j \le n \}.\] Given a function
$\phi\colon \Sigma \to\R$, for each $n \geq 1$ we set
\[
V_{n}(\phi) = \sup \left\{|\phi(x)-\phi(y)| : x,y \in \Sigma,\
x_{i}=y_{i} \text{ for } 0 \le i \le n-1 \right\}.
\]
We say that $\phi$ has \emph{summable variation} if
$\sum_{n=2}^{\infty} V_n(\phi)<\infty$. Clearly, if $\phi$ has
summable variation then it is continuous.
The so-called \emph{Gurevich pressure} of $\phi$ was defined by Sarig \cite{Sartherm}
as
\[
 P_G(\phi) := \lim_{n \to
\infty} \frac{1}{n} \log \sum_{x:\sigma^{n}x=x} \exp \left(
\sum_{i=0}^{n-1} \phi(\sigma^{i}x)\right) \chi_{C_{i_{0}}}(x),
\]
where $\chi_{C_{i_{0}}}(x)$ is the characteristic function of the
cylinder $C_{i_{0}} \subset \Sigma$.
We consider a special class of invariant measures. We say that $\mu\in \mathcal{M}_\sigma$ is a \emph{Gibbs measure} for the
function $\phi \colon \Sigma \to \R$ if for some constants $P$,
$C>0$ and every $n\in \N$ and $x\in C_{i_0 \cdots i_n}$ we have
\begin{equation} \label{eq:gibbs}
\frac{1}C \le \frac{\mu(C_{i_0\cdots i_n})}{\exp (-nP + \sum_{i=0}^n
\phi(\sigma^k x))} \le C.
\end{equation}
It was proved by Mauldin and Urba\'nski \cite{muGIBBS} and by Sarig in \cite{SarBIP} that if $(\Sigma, \sigma)$ is a full-shift and the
function $\phi$ is of summable variations with finite Gurevich pressure, then it has an invariant Gibbs measure. Moreover, if $(\Sigma, \sigma)$ is the full-shift and
$\phi$ has summable variations and finite Gurevich pressure then the function
\[t \mapsto P_G(t \phi)\]
is real analytic for every $t \geq 1$ (see \cite{Sarphase}).

\begin{rem}
Recall that the system $F:(X,F)^\infty \to (X,F)^\infty$ is topologically conjugated to the full-shift on a countable alphabet. In particular, every potential $\Phi: X \to \R$ has a symbolic version, $\overline{\Phi}: \Sigma \to \R$.  In all the cases of induced systems we consider in this paper we have, by the Variational Principle \cite[Theorem 3]{Sartherm}, $P(\Phi)= P_G(\overline{\Phi})$.  Therefore in order to simplify the notation we will denote the pressure by $P(\Phi)$ when the underlying system is the induced system and when it is the full-shift on a countable alphabet.
\end{rem}

\section{The Lyapunov spectrum}

\label{sec:Lyap spec}

In this section we consider the multifractal decomposition of the interval obtained by studying the level sets associated to the Lyapunov exponent for maps $f\in \F$.   In recent years a great deal of attention has been paid to this decomposition.  This is partly due to the fact that the Lyapunov exponent is dynamical characteristic that captures important features of the dynamics.  It is closely related to the existence of absolutely continuous (with respect to Lebesgue) invariant measures.

The \emph{lower/upper pointwise Lyapunov exponent} are defined by  $$\underline\lambda_f(x):=\liminf_{n\to\infty} \frac{1}{n} \sum_{j=0}^{n-1} \log|Df(f^j(x))|, \text{ and } \overline\lambda_f(x):=\limsup_{n\to\infty} \frac{1}{n} \sum_{j=0}^{n-1} \log|Df(f^j(x))|$$ respectively.  If $\underline\lambda_f(x)=\overline\lambda_f(x)$, then the \emph{Lyapunov exponent} of the map $f$ at $x$ is defined by
$\lambda(x)=\lambda_f(x)= \underline\lambda_f(x)=\overline\lambda_f(x)$.

The associated level sets for $\alpha \geq 0$ are defined by,
\begin{equation*}
J(\alpha)= \Big{\{} x \in I :  \lim_{n \to \infty} \frac{1}{n} \log |(f^n)'(x)| = \alpha \Big{\}}.
\end{equation*}
 Note that for some values of $\alpha$ we have  $J(\alpha)=\emptyset$, a trivial example being for $\alpha>\log(\sup_{x\in I}|Df(x)|)$. Let
\begin{equation*}
J'= \Big{\{} x \in I : \textrm{the limit} \lim_{n \to \infty} \frac{1}{n} \log |(f^n)'(x)| \textrm{ does not exist}  \Big{\}}.
\end{equation*}
The unit interval can be decomposed in the following way (the
\emph{multifractal decomposition}),
\begin{equation*}
 [0,1]= J' \cup \left( \cup_{\alpha} J(\alpha) \right).
\end{equation*}
The function that encodes this decomposition is called \emph{multifractal spectrum of the Lyapunov exponents} and is defined by
\begin{equation*}
L(\alpha):= \dim_H(J(\alpha)),
\end{equation*}

%where $\dim_H$ denotes the Hausdorff dimension of a set.
This function was first studied by Weiss \cite{Wei}, in the context of  axiom A maps.  The study of the multifractal spectrum of the Lyapunov exponent for multimodal maps began with the work of Todd \cite{T}.

\iffalse
For $\mu\in \M$ we let $\lambda(\mu):=\lambda(\mu)$, and denote
\begin{eqnarray}
\lambda_m= \lambda_m(\log|Df|) := \inf \left\{ \int \log |Df| \ d \mu : \mu \in \mathcal{M} \right\} ;\\
 \lambda_M= \lambda_M(\log|Df|) := \sup \left\{ \int \log|Df| \ d \mu : \mu \in \mathcal{M} \right\}.
\end{eqnarray}
\fi

We define
\begin{eqnarray*}
\lambda_m:= \inf \left\{\lambda(\mu) : \mu \in \mathcal{M} \right\} \text{ and } \lambda_M:= \sup \left\{\lambda(\mu) : \mu \in \mathcal{M} \right\}.
\end{eqnarray*}
We next show that the range of values that the Lyapunov exponent can attain is an interval contained in $[\lambda_m, \lambda_M]$.
 \iffalse
 Here we prove  the easy fact that this holds for the larger Lyapunov exponents.  For the smaller ones, this can be shown using an argument communicated to us by J.\ Rivera-Letelier.  We omit the proof here for brevity.  \textbf{MT: better suggestion here? GI: we should at least quote something, is there anything in Feliks paper? let us deal with that later.}
\fi

We define
$$\lambda_{\inf}:=\inf\{\lambda(x):x\in I \text{ and this value is defined}\}$$
and
$$\lambda_{\sup}:=\sup\{\lambda(x):x\in I \text{ and this value is defined}\}.$$
The next lemma follows as in \cite[Lemma 6]{GelPrRa}.

\begin{lema}\label{lem:per pt small LE}
For all $x\in (\lambda_{\inf}, \lambda_{\sup})$ there exists a periodic point $p$ with $\lambda(p)\le \lambda(x)$.
\end{lema}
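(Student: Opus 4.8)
The plan is to follow the strategy of \cite[Lemma 6]{GelPrRa}, adapted to the multimodal interval setting. Fix $x$ with $\lambda(x)\in(\lambda_{\inf},\lambda_{\sup})$ defined; the goal is to find a periodic point $p$ with $\lambda(p)\le\lambda(x)$. The first step is to observe that since $\lambda(x)>\lambda_{\inf}$, there is some point $y\in I$ (or a sequence of points) whose Lyapunov exponent is defined and strictly smaller than $\lambda(x)$; in particular, along the orbit of such a point the Birkhoff averages $\frac1n\sum_{j=0}^{n-1}\log|Df(f^j(y))|$ dip below $\lambda(x)$ for arbitrarily large $n$. If for \emph{some} $n$ we actually have $f^n(y)=y$ we are already done, so assume otherwise and work with long pieces of orbit $y,f(y),\dots,f^{n-1}(y)$ along which the average of $\log|Df|$ is at most $\lambda(x)$.

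The second step is to close up such an orbit segment into a genuine periodic orbit without increasing the average of $\log|Df|$ by more than a vanishing amount. Here I would use the shadowing/specification-type properties available for $f\in\F$: topological transitivity (condition (b) in the definition of $\F$) together with the absence of parabolic points gives enough expansion away from $\Crit$ to concatenate orbit segments. Concretely, one chooses the segment to begin and end in a small neighbourhood $V$ of a fixed reference point in the hyperbolic part of $I$, applies a closing argument (using bounded distortion on the relevant branches, as in the inducing-scheme construction of Section~\ref{subsec:ind_sch}, or directly via the Koebe/negative Schwarzian distortion bounds) to produce a periodic point $p_n$ of period $n+O(1)$ shadowing the segment, and estimates
\[
\lambda(p_n)\;=\;\frac{1}{\mathrm{per}(p_n)}\log|Df^{\mathrm{per}(p_n)}(p_n)|\;\le\;\frac{1}{n}\sum_{j=0}^{n-1}\log|Df(f^j(y))|+\frac{C}{n}\;\le\;\lambda(x)+\frac{C}{n},
\]
where $C$ absorbs the bounded number of extra iterates and the bounded distortion constant. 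Letting $n\to\infty$ produces periodic points with Lyapunov exponent arbitrarily close to $\le\lambda(x)$; a further short argument (or compactness of the set of periodic orbits' exponents, or simply re-running the closing lemma with the target $\lambda(x)-\delta$ in place of $\lambda(x)$ for the point $y$ with $\lambda(y)<\lambda(x)$) upgrades this to an actual periodic point $p$ with $\lambda(p)\le\lambda(x)$.

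The main obstacle is the closing step: the orbit segments one wants to close may pass near critical points, where $|Df|$ is small and distortion control is delicate, so one must be careful that the ``closing'' modification is performed in the expanding region and that the correction term really is $O(1/n)$ rather than something that could push the exponent above $\lambda(x)$. This is exactly the point where the hypotheses on $\F$ (finite critical order, negative Schwarzian, transitivity, hence no parabolic cycles) are used, and where invoking the distortion estimates underlying Theorem~\ref{thm:schemes} — passing to an inducing scheme $(X^n,F_n)$ on which $F_n$ is uniformly expanding with bounded distortion, closing up there, and projecting back — gives the cleanest route. Since the statement only asserts existence of \emph{one} such periodic point and the lemma is cited as following ``as in \cite[Lemma 6]{GelPrRa}'', I would keep the argument brief, doing the closing via the inducing schemes and referring to \cite{GelPrRa} for the structure of the estimate.
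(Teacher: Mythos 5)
The paper does not actually give a proof of this lemma; it simply asserts that it ``follows as in \cite[Lemma 6]{GelPrRa}'', so there is no argument in the text to compare against line by line. Your sketch has the right general shape — exploit the slack $\lambda(x)>\lambda_{\inf}$ to find $y$ with $\lambda(y)<\lambda(x)$ defined, take long orbit segments of $y$ along which the Birkhoff average of $\log|Df|$ is below $\lambda(x)$, close these into periodic orbits, and let the buffer $\lambda(x)-\lambda(y)$ absorb the correction — and you correctly flag the closing step as the crux. But that step is left genuinely open, in ways a grader of this lemma would want resolved.

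First, ``shadowing/specification-type properties'' cannot simply be invoked for maps in $\F$: in the presence of critical points there is no general specification or shadowing, and this is precisely why the lemma is non-trivial. The closing argument in \cite{GelPrRa} (and in the related rational-maps literature of Przytycki and Rivera-Letelier) is a hand-built telescoping/pull-back argument: one selects good times along the orbit of $y$ (a Pliss-type combinatorial lemma gives sub-windows over which the Birkhoff sum stays controlled), pulls back a fixed interval along the orbit using bounded distortion on monotone branches, and shows the pulled-back interval is compactly contained in the original, forcing a fixed point of $f^n$ with derivative comparable to $|Df^n(y)|$. None of this is automatic, and it is where the hypotheses on $\F$ (negative Schwarzian for distortion, transitivity to rule out attracting and parabolic cycles, finite critical order) really enter. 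Second, the fallback route of ``pass to an inducing scheme, close there, and project back'' has its own gap: Theorem~\ref{thm:schemes} guarantees that ergodic \emph{measures} with positive Lyapunov exponent are compatible with some scheme and that the set of points with $\overline\lambda>0$ missed by finitely many schemes has small Hausdorff dimension, but neither statement guarantees that the orbit of the \emph{specific} point $y$ you chose is captured by any scheme, and even if it is, the inducing times seen along that orbit need not be uniformly bounded, so the error term is not obviously $O(1/n)$. Finally, any expansion-based closing requires $\lambda(y)>0$ (so that pull-backs shrink); you should note that this can be arranged from $\lambda(x)>\lambda_{\inf}\ge 0$, but it is not automatic for an arbitrary $y$ with $\lambda(y)<\lambda(x)$. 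In short: the skeleton matches what one would expect from \cite[Lemma 6]{GelPrRa}, but the closing mechanism — the entire content of the lemma — is described only by analogy and, as written, the inducing-scheme shortcut does not fill it.
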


This leads to:

\begin{lema}\label{lem:LE range}
$\lambda_m=\lambda_{\inf}$ and $\lambda_M=\lambda_{\sup}$.
\end{lema}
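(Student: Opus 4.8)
The plan is to prove the two equalities by establishing both inequalities in each case, using that invariant measures can be approximated by periodic orbits (and conversely). First I would record the easy inequalities. Since every periodic point $p$ carries an ergodic invariant measure $\mu_p$ equidistributed on its orbit with $\lambda(\mu_p) = \lambda(p)$, and since for a measure $\mu \in \mathcal M$ almost every point $x$ has $\lambda(x) = \lambda(\mu)$ by Birkhoff's ergodic theorem (after an ergodic decomposition, so we may assume $\mu$ ergodic), we get that the set of well-defined pointwise Lyapunov values contains $\{\lambda(\mu_p) : p \text{ periodic}\}$ and is contained in the closure of $\{\lambda(\mu): \mu \in \mathcal M\}$-reachable values. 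This immediately yields $\lambda_{\inf} \ge \lambda_m$ and $\lambda_{\sup} \le \lambda_M$: indeed if $x$ is a point where $\lambda(x)$ is defined, I want to produce an invariant measure with exponent $\le \lambda(x)$ (resp. $\ge \lambda(x)$); the cleanest route is via weak-$*$ limits of the empirical measures $\frac1n\sum_{j=0}^{n-1}\delta_{f^j(x)}$, whose limits are invariant and, because $\log|Df|$ may fail to be continuous at critical points, one must be slightly careful — but for the \emph{lower} bound $\lambda_m \le \lambda_{\inf}$ one uses that $\log|Df|$ is bounded above and upper semicontinuity-type arguments give $\int \log|Df|\,d\mu \le \lambda(x)$ for a subsequential limit $\mu$; symmetrically, for $\lambda_M \ge \lambda_{\sup}$ I would instead invoke Lemma 1.2 in its "mirror" form or argue directly that high pointwise exponents force an invariant measure of large exponent.

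For the reverse inequalities I would use Lemma~\ref{lem:per pt small LE} together with its evident symmetric counterpart. Lemma~\ref{lem:per pt small LE} says every $x \in (\lambda_{\inf},\lambda_{\sup})$ is $\ge \lambda(p)$ for some periodic $p$; applied as the infimum is approached, this shows $\lambda_m \le \inf_p \lambda(p) \le \lambda_{\inf}$, since periodic orbit measures lie in $\mathcal M$. For the supremum side, the analogous statement (which follows by the same argument of \cite[Lemma 6]{GelPrRa}, approximating from below rather than above, or simply because periodic orbits with exponent close to $\lambda_{\sup}$ exist by the specification-type / shadowing properties available on the topologically transitive non-uniformly hyperbolic system) gives $\lambda_M \ge \lambda_{\sup}$. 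Combining: $\lambda_m \le \lambda_{\inf} \le \lambda(\mu)$-values $\le \lambda_M$ and the same chain reversed forces $\lambda_m = \lambda_{\inf}$ and $\lambda_M = \lambda_{\sup}$.

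The main obstacle I anticipate is the behaviour near the critical set: $\log|Df|$ is $-\infty$ at critical points and merely upper semicontinuous, so weak-$*$ limits of empirical measures can \emph{lose} mass in the exponent (the functional $\mu \mapsto \int \log|Df|\,d\mu$ is upper semicontinuous but not continuous). This is exactly why one cannot naively say "the empirical measures converge to a measure with the same exponent". The resolution is that for the infimum direction upper semicontinuity works \emph{in our favour} (any limit measure has exponent no larger than $\liminf$ of the averages, hence $\le \lambda(x)$), while for the supremum direction one avoids the issue entirely by going through periodic points as in Lemma~\ref{lem:per pt small LE}, where the orbit measures are explicit and no limiting procedure degrades the exponent. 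I would also note that $\lambda_m \ge 0$ is not claimed here (measures supported near critical orbits can have small or zero exponent), so no positivity needs to be invoked, keeping the argument purely about the order structure of the set of exponents.
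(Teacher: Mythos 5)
There is a genuine error in the semicontinuity step, and it causes you to assign the two tools to the wrong inequalities. Since $\log|Df|$ is upper semicontinuous and bounded above, for a weak$^*$ limit $\mu$ of the empirical measures $\mu_n = \frac1n\sum_{k<n}\delta_{f^k(x)}$ one gets $\limsup_n \int\log|Df|\,d\mu_n \le \int\log|Df|\,d\mu$, i.e.\ $\lambda(\mu) \ge \lambda(x)$ (the limit measure's exponent can only go \emph{up}, never down). You claim the opposite in two places: that the u.s.c.\ argument yields $\int\log|Df|\,d\mu \le \lambda(x)$, and at the end that "any limit measure has exponent no larger than $\liminf$ of the averages." Both statements are backwards. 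Consequently, the empirical-measure route produces a measure of \emph{large} exponent and so proves $\lambda_M \ge \lambda_{\sup}$; it cannot produce a measure of small exponent and so does \emph{not} prove $\lambda_m \le \lambda_{\inf}$. That is precisely why the inf direction cannot avoid Lemma~\ref{lem:per pt small LE}.

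The paper's arrangement is exactly the transpose of yours: the easy directions $\lambda_m \ge \lambda_{\inf}$ and $\lambda_M \le \lambda_{\sup}$ follow from Birkhoff (a typical point for an ergodic $\mu$ realizes $\lambda(\mu)$ as a pointwise exponent); the hard direction $\lambda_m \le \lambda_{\inf}$ uses Lemma~\ref{lem:per pt small LE} to get a periodic $p$ with $\lambda(p) \le \lambda(x)$, whose orbit measure lies in $\mathcal M$; and the hard direction $\lambda_M \ge \lambda_{\sup}$ uses the empirical-measure/u.s.c.\ argument. Your proposed "mirror" of Lemma~\ref{lem:per pt small LE} (a periodic point with exponent $\ge \lambda(x)$) is not used in the paper and is not needed, since the empirical-measure argument already handles the sup side without any extra lemma. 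Also note that your rendering of Birkhoff ("the set of pointwise Lyapunov values is contained in the closure of $\{\lambda(\mu)\}$") has the containment reversed: Birkhoff gives that each ergodic $\lambda(\mu)$ is itself a realized pointwise value, not that every pointwise value is approximated by measure exponents — the latter is exactly the nontrivial content of the lemma.
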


\begin{proof}
First note that $\lambda_m\ge \lambda_{\inf}$ is clear from the definition.  To show the opposite inequality, suppose that $\lambda(x)$ is defined, in which case by definition $\lambda(x)\ge\lambda_{\inf}$.  Then by Lemma~\ref{lem:per pt small LE}, there exists a periodic point $p$ with $\lambda(p)\le \lambda(x)$.  The Dirac measure $\mu_p$ equidistributed on the orbit of $p$ must have $\lambda(\mu_p)\in [\lambda_m, \lambda_M]$, so $\lambda(x)\ge \lambda_m$.  Hence $\lambda_m\le \lambda_{\inf}$, as required.

For the second part, again suppose that $\lambda(x)$ is well defined.  Then let $$\mu_n:=\frac1n\sum_{k=0}^{n-1}\delta_{f^k(x)}$$ where $\delta_y$ is the Dirac mass on $y$.  Let $\mu$ be a weak$^*$ limit of this sequence.  Since $\log|Df|$ is upper semicontinuous, $$\lambda_M\ge \lambda(\mu)\ge \lim_{n\to \infty}\lambda(\mu_n)=\lambda(x)$$ as required.
\end{proof}

 Note that one way of reading the previous lemma is that if we understand the sets  $J(\alpha)$ for $\alpha\in [\lambda_m, \lambda_M]$ then we understand all non-empty sets $J(\alpha)$.  That is, the spectrum is complete in the sense of Schmeling (see \cite{sch})

\iffalse
We stress that the extreme points of the interval $[\lambda_m, \lambda_M]$ might not be attained.  As in Theorem~\ref{thm:ITeq_exist_unique}, for every $\lambda \in (\lambda_m, \lambda_M]$ there  exists a unique parameter $t_{\lambda} \in \mathbb{R}$ such that there exists a unique equilibrium measure $\mu_{\lambda}$ corresponding to $-t_{\lambda} \log |Df|$ such that
\[ \lambda(\mu_{\lambda}) = \lambda. \]
\fi
It can be shown by Theorem~\ref{thm:ITeq_exist_unique} that for every $\lambda \in (\lambda_m, \lambda_M]$ there exists a unique parameter $t_{\lambda} \in \mathbb{R}$ with a unique equilibrium measure $\mu_{\lambda}$ corresponding to $-t_{\lambda} \log |Df|$ such that
\[ \lambda(\mu_{\lambda}) = \lambda. \]
However, as in \cite[Lemma 5.5]{BrKell} there are maps $f\in\F$ with no measure $\mu\in \M_f$ with $\lambda(\mu)=\lambda_m=0$.

\iffalse
The following Theorem has been proved in different settings with different assumptions on the hyperbolicity of the system. For example, for the Gauss map it was proved by Kesseb\"ohmer  and  Stratmann \cite{KeS}, for maps with parabolic fixed points by
Gelfert and Rams \cite{GelRa}, for maps with countably many branches and parabolic fixed points  by Iommi \cite{Iom} and for rational maps by Gelfert,  Przytycki and Rams \cite{GelPrRa}.
\fi

\iffalse
We define $\lambda^*:=-D^-p(t^+)$

\[
\lambda^*:=
\begin{cases}
-D^-p(t^+) & \text{ if } t^+> 1,\\
0 & \text{ if } t^+= 1.
\end{cases}
\]
\fi

We define
\[
A:=
\begin{cases}
\left[\lambda_m,-D^-p(t^+)\right) & \text{ if } t^+\neq 1,\\
\{0\} & \text{ if } t^+=1.
\end{cases}
\]

\begin{rem}
If we assumed that $f\in \F_{ac}$ was $C^2$, then combining the arguments of \cite[Exercise V.1.4]{MSbook} and \cite[Proposition 7]{SVarg}, the acip $\mu_{ac}$ must have positive entropy and Lyapunov exponent.   Since in this case $Dp^-(1)=-\lambda(\mu_{ac})$, $p(t)$ is strictly decreasing on $(-\infty, 1)$.  In this case, an equivalent definition of $A$ would be
\[
A:=
\begin{cases}
\left[\lambda_m,-D^-p(t^+)\right) & \text{ if } \lambda_m>0,\\
\{0\} & \text{ if } \lambda_m=0.
\end{cases}
\]
\label{rmk:acip pos ent}
\end{rem}

\begin{teo}  Suppose that $f\in \F_{ac}$.  Let $\lambda \in \R\sm A$. The Lyapunov spectrum satisfies the following relation
\begin{equation} \label{eq:LE leg}
L(\lambda) = \frac{1}{\lambda} \inf_{t \in \mathbb{R}}\left(p(t) + t \lambda\right) =
 \frac{1}{\lambda} \left(p(t) +t_{\lambda} \lambda \right).
 \end{equation}
If $\lambda \in (-D^-p(t^+), \lambda_M)$ then we also have
\begin{equation}
L(\lambda)= \frac{h(\mu_{\lambda})}{\lambda}.
\end{equation}
If $t^+>1$ and $\lambda\in A$ then $$L(\lambda)\ge \frac{1}{\lambda} \inf_{t \in \mathbb{R}}\left(p(t) + t \lambda\right).$$
Moreover, the irregular set $J'$ has full Hausdorff dimension.
\label{thm:Lyap detail}
\end{teo}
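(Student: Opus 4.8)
The strategy is the standard one for multifractal analysis via inducing schemes, following \cite{T} but exploiting the family of inducing schemes from Theorem~\ref{thm:schemes} and the thermodynamic results of \cite{IT}. The proof splits into a lower bound and an upper bound for $L(\lambda)$, plus the treatment of the irregular set.

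*Upper bound.* For $\lambda$ in the relevant range and any $t\in\R$ one has, for $x\in J(\lambda)$, $\tfrac{1}{n}\log|Df^n(x)|\to\lambda$, hence by a covering argument (using that $f$ is conformal, so cylinders of the natural refinement are comparable to balls) the $s$-dimensional Hausdorff measure of $J(\lambda)$ vanishes whenever $s>\tfrac1\lambda(p(t)+t\lambda)$. Concretely, cover $J(\lambda)$ by preimages of a partition, estimate $\sum|U_i|^s$ against $\exp(-n(P(-t\log|Df|))+\text{error})$ using the definition of pressure, and optimise over $t$. This gives $L(\lambda)\le \tfrac1\lambda\inf_t(p(t)+t\lambda)$ for all $\lambda\in\R\sm A$. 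One must be slightly careful near $\lambda=0$ and for $\lambda$ below $-D^-p(t^+)$, which is precisely why $A$ is excised: there the infimum is attained "at infinity'' (the pressure function is linear on $(t^+,\infty)$) and the naive bound can fail. For $\lambda\in A$ with $t^+>1$ no upper bound is claimed.

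*Lower bound.* For $\lambda\in(\lambda_m,\lambda_M)$ one has by Theorem~\ref{thm:ITeq_exist_unique} the equilibrium state $\mu_\lambda$ for $-t_\lambda\log|Df|$ with $\lambda(\mu_\lambda)=\lambda$; by Theorems~\ref{thm:ITeq_exist_unique}--\ref{thm:IT press conv} it has positive entropy and positive Lyapunov exponent, and $t_\lambda$ realises the infimum, so $p(t_\lambda)+t_\lambda\lambda=h(\mu_\lambda)$. By the variational/Legendre duality and $D p(t_\lambda)=-\lambda$ this yields the candidate value $L(\lambda)=\tfrac{h(\mu_\lambda)}{\lambda}$. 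The measure $\mu_\lambda$ has positive Lyapunov exponent so lifts to some inducing scheme $(X^n,F_n)$ (Theorem~\ref{thm:schemes}(a)); on the induced (countable full-shift) system it is a Gibbs/equilibrium state, and a standard Frostman/Volume Lemma argument there (as in \cite[Chapter~7]{Pesbook} or \cite{T}) shows $d_{\mu_\lambda}(x)=h(\mu_\lambda)/\lambda$ for $\mu_\lambda$-a.e.\ $x$, all of which lie in $J(\lambda)$; hence $\dim_H J(\lambda)\ge \dim_H\mu_\lambda=h(\mu_\lambda)/\lambda$ by Proposition~\ref{prop:lower}. This proves the second displayed equality on $(-D^-p(t^+),\lambda_M)$, and combined with the upper bound gives \eqref{eq:LE leg} there. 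For $\lambda\in\R\sm A$ with $\lambda\le -D^-p(t^+)$ (so $t^+<\infty$, linear regime) one still needs the lower bound matching $\tfrac1\lambda\inf_t(p(t)+t\lambda)$; here I would approximate $\lambda$ from above by $\lambda'$ where the equilibrium measure exists, take measures supported deeper and deeper in the inducing schemes, and use the $\F_{ac}$ hypothesis via Theorem~\ref{thm:schemes}(c) to guarantee that these approximating measures/sets carry essentially all the dimension — i.e.\ the exhaustion $\cup_{n=1}^N(X^n,F_n)^\infty$ captures all but an $\eps$-dimensional set of points with $\overline\lambda>0$. For $\lambda\in A$ with $t^+>1$, the same approximation-from-inside/scheme-exhaustion argument gives only the inequality $L(\lambda)\ge \tfrac1\lambda\inf_t(p(t)+t\lambda)$, which is all that is asserted.

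*Irregular set.* That $\dim_H J'=\dim_H I$ follows from a now-standard construction (Barreira--Schmeling type, cf.\ \cite{sch}): whenever two ergodic measures $\mu_1,\mu_2$ with $\lambda(\mu_1)\ne\lambda(\mu_2)$ and $\dim_H\mu_i$ close to $\dim_H I$ exist (which they do here, taking e.g.\ $\mu_{ac}$ — available since $f\in\F_{ac}$ — and a measure of slightly different Lyapunov exponent, both of full or near-full dimension by the results above), one builds a Moran-type Cantor subset of $J'$ by alternating long blocks shadowing $\mu_1$ and $\mu_2$ on some fixed inducing scheme, forcing the Birkhoff averages of $\log|Df|$ to oscillate while keeping the dimension arbitrarily close to $\dim_H\mu_1\wedge\dim_H\mu_2$. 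Letting the two measures tend to full dimension gives $\dim_H J'=1$.

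*Main obstacle.* The technical heart is the lower bound on the linear part $\lambda\le -D^-p(t^+)$ and on $A$: there is no equilibrium state of the right Lyapunov exponent in the original system, so one must build approximating invariant measures inside the non-compact inducing schemes with Lyapunov exponent close to $\lambda$, control their entropy via Gurevich pressure and the real-analyticity of $t\mapsto P_G(t\overline\Phi)$, and — crucially — invoke $f\in\F_{ac}$ through Theorem~\ref{thm:schemes}(c) to ensure the dimension is not lost when passing between the induced systems and $I$. Keeping the distortion constants, the variational identity $h=p(t_\lambda)+t_\lambda\lambda$, and the exhaustion estimate uniform as $\eps\to0$ is where the real work lies.
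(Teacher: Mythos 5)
Your overall architecture (lower bound via equilibrium/Gibbs measures and Hofbauer-type pointwise-dimension computations, upper bound via pressure, irregular set via oscillation) is the right shape, and the treatment of $\lambda\in(-D^-p(t^+),\lambda_M)$ matches the paper. However, there are two genuine gaps where your sketch departs from what actually works.

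First, the upper bound. You propose covering $J(\lambda)$ by preimages of a partition of $I$ and estimating $\sum|U_i|^s$ against $e^{-nP(-t\log|Df|)}$. For a multimodal map this cannot be run directly on the original system: there is no Markov structure on $I$, and the distortion of $f^n$ on branch domains is uncontrolled near the critical orbits, so ``cylinders of the natural refinement are comparable to balls'' is precisely the assertion that fails here. The paper instead does the covering on each inducing scheme $(X^n,F_n)$: it computes the Markov pointwise dimension $\delta_{\mu_{F_n,t}}$ of the lifted Gibbs measure at every $x$ in the induced level set $J_n(\lambda)$, invokes a lemma of Pollicott--Weiss to identify $\delta_{\mu_{F_n,t}}$ with the genuine pointwise dimension $d_{\mu_{F_n,t}}$, applies Proposition~\ref{prop:upper} there, pushes forward through the bilipschitz projection $\pi_n$, and then \emph{crucially} uses Theorem~\ref{thm:schemes}(c) (this is where $f\in\F_{ac}$ enters) to assemble the estimate into an upper bound for $\dim_H J(\lambda)$: the finitely many schemes see all but an $\eps$-dimensional set of points with $\overline\lambda>0$. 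Your sketch does not account for the distortion obstruction, nor does it use Theorem~\ref{thm:schemes}(c) where it is actually needed.

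Second, the lower bound for small $\lambda$ (that is, $\lambda$ below $-D^-p(t^+)$ and the inequality on $A$ when $t^+>1$). You wave at ``measures supported deeper and deeper in the inducing schemes'' and then invoke Theorem~\ref{thm:schemes}(c) to say the dimension is captured; but for a lower bound one needs to exhibit an invariant measure concentrated on $J(\lambda)$ whose Hausdorff dimension is close to the target — Theorem~\ref{thm:schemes}(c) is an upper-bound tool and does nothing here. The paper's actual construction (Lemma~\ref{lem:small LE big dim}) is quite specific: by Lemma~\ref{lem:per pt small LE} choose a periodic point $x_p$ with Lyapunov exponent $\alpha'<\alpha$, take an inducing scheme $(X,F,\tau)$ in which $\tau\ge p$ for all branches (so that projecting divides the induced quantities by at least $p$), truncate to finitely many branches $(\tilde X^N,\tilde F_N)$, establish the Claim bounding the truncated pressure $p_N(t)$ above $-\delta(N)$ on $(1-\eps',1]$ and above $-t(\alpha'+\delta(N))$ on $(1,1+\eps')$, and use convergence of $Dp_N$ to locate $t\in[1,1+\eps')$ with $-Dp_N(t)=\alpha$ and $p_N(t)>-\eps'$; the equilibrium state for the truncated scheme then projects to $\mu_\alpha$ with $h(\mu_\alpha)-\alpha\ge-\eps'$, hence $\dim_H\mu_\alpha\ge 1-\eps'/\alpha$. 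When $t^+>1$ the argument is adapted using that $\lambda(\mu)\neq\lambda_m$ for all $\mu\in\M$, yielding the one-sided bound $L(\alpha)\ge t^++p(t^+)/\alpha$. Your sketch leaves all of this implicit and misallocates the role of $\F_{ac}$.

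On the irregular set your Moran-type construction from scratch is reasonable, but the paper simply cites Barreira--Schmeling \cite{BarSc} for the statement that $J'$ has full Hausdorff dimension; no new construction is required.
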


Theorem~\ref{thm:main Lyap} follows immediately from this.

\begin{rem}
 Theorem~\ref{thm:Lyap detail} implies that if $f\in \F_{ac}$ is such that $t^+=1$ then for every $\lambda \in (0,-D^-p(1))$ we have that $L(\lambda)=1$.  Recall that in the unimodal case, as in \cite{NoSa}, $\lambda_m=0$ implies that the Collet-Eckmann condition fails.  In the case that $t^+\neq 1$ (for example when $\lambda_m>0$) we expect that the formula $L(\lambda)= \frac{1}{\lambda} \inf_{t \in \mathbb{R}}\left(p(t) + t \lambda\right)$ still holds for $\lambda \in [\lambda_m,-D^-p(t^+))$, but we do not find an upper bound on this value in this paper.
%On the other hand, if the map $f\in \F_{ac}$
%is Collet-Eckmann then for every $\lambda \in (-Dp(t^+), \lambda_M)$ we have that $L(\lambda)<1$.
\label{rem:lack of LE upper bd}
\end{rem}

\begin{rem}
We stress that the above formula \eqref{eq:LE leg} for $L(\lambda)$ does not imply that the Lyapunov spectrum is concave.  For a discussion on that issue see the work of Iommi and Kiwi \cite{IomKi}.
\end{rem}

\begin{proof}[Proof of the lower bound]
Let $\lambda \in (-D^-p(t^+), \lambda_M)$. In order to prove the lower bound on the formula \eqref{eq:LE leg}, consider the equilibrium measure $\mu_{\lambda}$ corresponding to $-t_{\lambda} \log |Df|$ such that $ \lambda(\mu_{\lambda}) = \lambda. $ We have
\begin{enumerate}
\item $\mu_{\lambda}(I\sm J(\lambda))= 0$;
\item the measure $\mu_{\lambda}$ is ergodic;
  \item  by \cite{Hofdim}, the pointwise dimension is $\mu_{\lambda}$-almost everywhere equal to
\[\lim_{r \to 0} \frac{\log \mu_{\lambda}(B(x,r))}{\log r}   = \frac{h(\mu_{\lambda})}{\lambda},\]
where $B(x,r)$ is the ball of radius $r>0$ centred at $x \in [0,1]$.
\end{enumerate}
Therefore, Proposition~\ref{prop:lower} implies
 %standard arguments in dimension theory (see Section \ref{sec:prelim dim} or \cite[Theorem $7.2$]{Pesbook}) yield
\[ \dim_H(J(\lambda)) \geq  \frac{h(\mu_{\lambda})}{\lambda}.\]
 We next consider $\lambda \in (\lambda_m , -D^-p(t^+) )$.  The following lemma applies when $\lambda_m=0$.

\begin{lema} \label{lem:small LE big dim}
Suppose that $f\in \F_{ac}$ has $t^+=1$.  Then for any  $\alpha\in (0, -D^-p(1))$ and $\eps>0$ there exists an ergodic measure $\mu\in \M$ with $\lambda(\mu)=\alpha$ and $\dim_H(\mu)\ge 1-\eps$.
\end{lema}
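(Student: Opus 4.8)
The plan is to reduce the statement, via Hofbauer's pointwise dimension formula, to a purely ergodic assertion, and then to glue together two auxiliary ergodic measures. Write $\lambda_0:=-D^-p(1)$, so that by hypothesis $0<\alpha<\lambda_0$; since $t^+=1$ one has $\lambda_m=0$ (see \cite{IT} and Remark~\ref{rmk:acip pos ent}), hence $\lambda_m<\alpha$. By \cite{Hofdim} and Proposition~\ref{prop:lower}, every ergodic $\mu\in\M$ with $\lambda(\mu)>0$ satisfies $\dim_H(\mu)=h(\mu)/\lambda(\mu)$, so it is enough to produce an \emph{ergodic} $\mu\in\M$ with $\lambda(\mu)=\alpha$ and $h(\mu)\ge(1-\eps)\alpha$.

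For the ingredients I would use two measures. First, a nearly one-dimensional one: by \cite{IT}, $p(1)=P(-\log|Df|)=0$, and by Theorem~\ref{thm:IT press conv} $p$ is $C^1$ and strictly convex on $(-\infty,1)$ with $p'(t)\to-\lambda_0$ as $t\to1^-$; so if $\mu_t$ denotes the equilibrium state of $-t\log|Df|$ (Theorem~\ref{thm:ITeq_exist_unique}), then $\lambda(\mu_t)=-p'(t)\to\lambda_0$ and $h(\mu_t)=p(t)+t\lambda(\mu_t)\to\lambda_0$, whence $h(\mu_t)/\lambda(\mu_t)\to1$ as $t\to1^-$. I fix $t_*<1$ close enough to $1$ that $\mu_1:=\mu_{t_*}$ has $\lambda(\mu_1)>\alpha$ and $h(\mu_1)/\lambda(\mu_1)>1-\eps$ (in particular $h(\mu_1)>0$), and set $\kappa:=\lambda(\mu_1)-\alpha>0$. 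Secondly, a slow periodic orbit: since $\lambda_{\inf}=\lambda_m=0$ (Lemma~\ref{lem:LE range}) and $f\in\F$ has no parabolic cycles, Lemma~\ref{lem:per pt small LE} provides, for each $\delta>0$, a periodic point $p$ with $0<\lambda(p)<\delta$; put $\mu_2:=\mu_p$, so $\lambda(\mu_2)=\lambda(p)=:\beta\in(0,\delta)$ and $h(\mu_2)=0$.

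The main work is to combine these into a single ergodic measure with exponent \emph{exactly} $\alpha$. The non-ergodic model is $\rho\mu_1+(1-\rho)\mu_2$ with $\rho:=\frac{\alpha-\beta}{\lambda(\mu_1)-\beta}\in(0,1)$: its exponent is $\alpha$ and, entropy being affine, its entropy is $\rho\,h(\mu_1)=\rho\lambda(\mu_1)\cdot\frac{h(\mu_1)}{\lambda(\mu_1)}$, which can be made $>(1-\eps)\alpha$ for $\delta$ small because $\rho\lambda(\mu_1)\to\alpha$ as $\beta\to0$. To make it ergodic with the precise exponent I would use a horseshoe argument in the spirit of \cite{GelPrRa}: since $f$ is $C^{1+}$ and $\mu_1$ is ergodic with $h(\mu_1),\lambda(\mu_1)>0$, a Katok-type construction yields a compact hyperbolic set $\Gamma$, with $f|_\Gamma$ topologically conjugate to a subshift of finite type, such that $h_{top}(f|_\Gamma)$ is within $\eps\alpha$ of $h(\mu_1)$ and $|\lambda(\eta)-\lambda(\mu_1)|<\kappa/2$ for every $\eta\in\M(f|_\Gamma)$; by topological transitivity of $f$ on $I$, $\Gamma$ and $\orb(p)$ can be joined by finite orbit segments into one compact transitive hyperbolic set $\widehat\Gamma\supseteq\Gamma\cup\orb(p)$, on which $f$ is conjugate to a transitive subshift of finite type. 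On $\widehat\Gamma$ the equilibrium states $\mu_s$ of $-s\log|Df|$ are ergodic and $s\mapsto\lambda(\mu_s)$ is continuous, real-analytic and strictly decreasing (the two pieces of $\widehat\Gamma$ carry different exponents, so $\log|Df|$ is not cohomologous to a constant there), with range the open interval from $\inf_{\M(f|_{\widehat\Gamma})}\lambda\le\beta$ to $\sup_{\M(f|_{\widehat\Gamma})}\lambda\ge\lambda(\mu_1)-\kappa/2=\alpha+\kappa/2$; since $\beta<\alpha<\alpha+\kappa/2$ there is $s_0$ with $\lambda(\mu_{s_0})=\alpha$, and I take $\mu:=\mu_{s_0}$, ergodic with $\lambda(\mu)=\alpha$. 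Finally, from $h(\mu)-s_0\alpha=P_{\widehat\Gamma}(-s_0\log|Df|)\ge h(\eta)-s_0\lambda(\eta)$ for all $\eta\in\M(f|_{\widehat\Gamma})$, applied to an ergodic $\eta$ on $\widehat\Gamma$ obtained by interleaving long $\Gamma$-blocks of entropy rate $\approx h_{top}(f|_\Gamma)$ with excursions to $\orb(p)$ at frequency $1-\rho$ (so $\lambda(\eta)\approx\alpha$ and $h(\eta)\approx\rho\,h_{top}(f|_\Gamma)$), together with the a priori bound on $|s_0|$ in terms of $\|\log|Df|\|$ on $\widehat\Gamma$, $\kappa$ and $\alpha-\beta$, one gets $h(\mu)\ge(1-\eps)\alpha$ with the $\eps$-budget distributed in the routine way.

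I expect the reduction and the construction of $\mu_1$ and $\mu_2$ to be routine given the quoted results; the crux is the last step --- promoting the averaged data of $\mu_1$ and $\mu_2$ to a \emph{single ergodic} measure of Lyapunov exponent \emph{exactly} $\alpha$ while forfeiting only $O(\eps)$ of entropy. This is exactly where topological transitivity of $f$ (to wire the horseshoe to the slow periodic orbit) and the continuity and strict monotonicity of thermodynamic quantities on a transitive subshift of finite type (to hit the value $\alpha$ on the nose and to control the entropy by a Legendre-type inequality) are used.
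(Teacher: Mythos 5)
Your proposal takes a genuinely different route from the paper. The paper builds an inducing scheme $(X,F,\tau)$ with $X$ a small neighbourhood of the slow periodic point $x_p$ (so $x_p$ is automatically one of the branches, with $\tau(x_p)=p$), truncates it to $N$ branches $(\tilde X^N,\tilde F_N)$, and studies the pressure $p_N(t)$ of the truncated scheme: the key claim is that $p_N(t)\ge -\delta(N)$ for $t\le 1$ and $p_N(t)\ge -t(\alpha'+\delta(N))$ for $t\in(1,1+\eps')$, which pins down a $t\in[1,1+\eps')$ with $-Dp_N(t)=\alpha$ and $p_N(t)>-\eps'$, whose equilibrium state (projected back to $I$) is the desired measure. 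You instead use Katok's theorem to produce a hyperbolic Cantor set $\Gamma$ near $\mu_{t_*}$, wire $\Gamma$ to the slow periodic orbit $\orb(p)$ by a Bowen bridge to get a transitive hyperbolic set $\widehat\Gamma$ conjugate to an SFT, and run thermodynamics for $-s\log|Df|$ on $\widehat\Gamma$ to hit exponent $\alpha$. Both constructions manufacture a compact hyperbolic subsystem containing $\orb(p)$ and approximating the reference equilibrium state; the paper's route stays entirely inside its own inducing-scheme machinery (Theorem~\ref{thm:schemes} and \cite{IT}), whereas yours imports the Katok theorem for $C^{1+}$ multimodal maps and a specification-type bridging lemma, tools available in the literature (\cite{GelPrRa} for rational maps, with interval analogues) but not developed in this paper.

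Two places in your argument need tightening. First, the bridging step is stated as if it were a formality of topological transitivity, but for multimodal maps it is not: the finite connecting orbit segments must avoid the critical set $\crit$ so that $\widehat\Gamma$ stays uniformly expanding, and one must verify that the enlarged set carries a Markov/SFT structure with $\log|Df|$ not cohomologous to a constant (you do note the latter). This is precisely the content of the bridge lemma in \cite{GelPrRa} and should be invoked precisely, not asserted. Second, your final entropy estimate is more complicated than necessary and relies on an unjustified ``a priori bound on $|s_0|$.'' The variational principle on $\widehat\Gamma$ holds for \emph{all} invariant measures, not just ergodic ones; so taking $\eta:=\rho\,\nu_\Gamma+(1-\rho)\mu_p$ with $\nu_\Gamma$ the measure of maximal entropy on $\Gamma$ and $\rho$ chosen so that $\lambda(\eta)=\alpha$ \emph{exactly}, the inequality $h(\mu_{s_0})-s_0\alpha\ge h(\eta)-s_0\lambda(\eta)$ collapses to $h(\mu_{s_0})\ge h(\eta)=\rho\,h_{top}(f|_\Gamma)$ with no error term and no need to bound $s_0$ at all. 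With that substitution the arithmetic in your proposal closes cleanly.
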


 The proof follows by approximating $(I,f)$ by hyperbolic sets on which we have equilibrium states with small Lyapunov exponent and large Hausdorff dimension.  The hyperbolic sets are invariant sets for truncated inducing schemes.

\begin{proof}    We may assume that $D^-p(t^+)<0$, otherwise there is nothing to prove.
Let $\eps'>0$.  By Lemma~\ref{lem:per pt small LE}, we can choose $\alpha'\in (0, \alpha)$, depending on $\eps'>0$, with a periodic point $x_p\in I$ of period $p$ such that $\lambda(x)=\alpha'$.  Then there is an inducing scheme $(X,F, \tau)$ as in Theorem~\ref{thm:schemes} such that $\tau(x_p)= p$ and $\tau(x)\ge p$ for all $x\in X$.   This follows from the fact that if $X$ is a small neighbourhood of $x_p$ then all points in $X$ must shadow the orbit of $x_p$ for at least the first $p$ iterates.  We can truncate $(X, F, \tau)$ to a scheme with $N$ branches $(\tilde X^N, \tilde F_N, \tilde \tau_N)$ and define
$$p_N(t):=\sup\{h(\mu)-t\lambda(\mu)-p(t):\mu\in\M \text{ and } \mu \text{ is compatible with } (\tilde X^N,\tilde F_N)\}.$$

\begin{claim}
There exist $\delta(N)>0$ where $\delta(N) \to 0$ as $N\to \infty$ such that for $t\in(1-\eps', 1]$, $p_N(t)\ge -\delta(N)$ and for $t\in (1, 1+\eps')$, $p_N(t)\ge -t(\alpha'+\delta(N))$.
\end{claim}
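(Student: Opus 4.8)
The plan is to treat the ranges $t>1$ and $t\le1$ separately. First, two preliminary observations. Enumerate the branches of $(X,F,\tau)$ so that the branch $X_1$ containing $x_p$ comes first, and let $(\tilde X^N,\tilde F_N,\tilde\tau_N)$ denote the truncation to $X_1,\dots,X_N$. Also, since $f\in\F$ has neither attracting nor neutral cycles, every $\mu\in\M$ has $\lambda(\mu)\ge0$, so Ruelle's inequality gives, for $t\ge1$, $h(\mu)-t\lambda(\mu)\le h(\mu)-\lambda(\mu)\le0$, whence $p(t)\le0$ for all $t\ge1$. Now for $t\in(1,1+\eps')$ I would use the periodic orbit: let $\mu_p\in\M$ be the measure equidistributed on the orbit of $x_p$. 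Since $F(x_p)=f^{\tau(x_p)}(x_p)=f^{p}(x_p)=x_p$, the point $x_p$ is a fixed point of $F$ inside $X_1$, so $\delta_{x_p}$ is $\tilde F_N$-invariant and, by \eqref{eq:lift} with $\int\tau\,d\delta_{x_p}=p<\infty$, projects to $\mu_p$; hence $\mu_p$ is compatible with $(\tilde X^N,\tilde F_N)$ for every $N$. As $h(\mu_p)=0$ and $\lambda(\mu_p)=\lambda(x_p)=\alpha'$, we get
\[
p_N(t)\ \ge\ h(\mu_p)-t\lambda(\mu_p)-p(t)\ =\ -t\alpha'-p(t)\ \ge\ -t\alpha'\ \ge\ -t(\alpha'+\delta(N))
\]
for any choice of $\delta(N)\ge0$, so this range imposes no condition on $\delta(N)$.

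The remaining range $t\in(1-\eps',1]$ is where $\delta(N)$ is actually needed. Set $\Psi_t:=-t\log|DF|-p(t)\tau$, the induced potential of $-t\log|Df|-p(t)$; on each finite truncation this is bounded (bounded distortion of $F$ together with $\tau$ locally constant) and of summable variation, so $(\tilde X^N,\tilde F_N,\Psi_t)$ has a unique equilibrium state $\nu_N$ and $P_{\tilde F_N}(\Psi_t)$ depends continuously on $t$. For any $\mu$ compatible with $(\tilde X^N,\tilde F_N)$ with lift $\mu_F$, Abramov's and Kac's formulas give
\[
h(\mu)-t\lambda(\mu)-p(t)=\frac{h(\mu_F)+\int\Psi_t\,d\mu_F}{\int\tau\,d\mu_F},
\]
so taking $\mu_F=\nu_N$ and using $\int\tau\,d\nu_N\ge1$ together with $P_{\tilde F_N}(\Psi_t)\le P_F(\Psi_t)$ (approximation of Gurevich pressure by finite subsystems) yields $p_N(t)\ge\min\{0,P_{\tilde F_N}(\Psi_t)\}$. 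The key point is then that $P_F(\Psi_t)=0$ for every $t\in(1-\eps',1]$. The bound $P_F(\Psi_t)\ge0$ follows by lifting the equilibrium state $\mu_t$ of $-t\log|Df|$ — which exists for $t<t^+=1$, equals $\mu_{ac}$ when $t=1$, has positive Lyapunov exponent and hence is compatible with $(X,F)$ by Theorem~\ref{thm:schemes} — since for its lift the numerator above vanishes. The reverse bound $P_F(\Psi_t)\le0$ is the inducing identity underlying the computation of $p(t)$ in \cite{IT}: an $F$-invariant measure with positive free energy for $\Psi_t$ would, after projecting, contradict the variational principle for $p(t)=P(-t\log|Df|)$. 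Consequently $P_{\tilde F_N}(\Psi_t)\nearrow0$, so $\delta(N):=\sup_{t\in[1-\eps',1]}\bigl(-P_{\tilde F_N}(\Psi_t)\bigr)\ge0$ satisfies $p_N(t)\ge-\delta(N)$ on $(1-\eps',1]$; and since $p$ is continuous on $[1-\eps',1]$, the functions $t\mapsto P_{\tilde F_N}(\Psi_t)$ are continuous and increasing in $N$ with continuous pointwise limit $0$, Dini's theorem gives $\delta(N)\to0$. This same $\delta(N)$ also serves the first range, which completes the proof.

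The step I expect to be the main obstacle is establishing $P_F(\Psi_t)=0$ uniformly up to and including the phase transition $t=t^+=1$. For $t<t^+$ this is standard from \cite{IT}, but at $t=t^+$ one must know that the acip genuinely lifts to the scheme — which it does, having positive Lyapunov exponent — and that the variational-principle half of the identity is not destroyed by $F$-invariant measures of infinite inducing time (here the sign $p(t)\ge0$ for $t\le1$, combined with $|DF|>1$ on the scheme, is what rescues the argument). The remaining technical points — boundedness and summable variation of the truncated potentials, continuity of $P_{\tilde F_N}(\Psi_t)$ in $t$, and hence the applicability of Dini's theorem — are routine consequences of the bounded distortion of $F$ and the fact that $\tau$ is constant on each branch.
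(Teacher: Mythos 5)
Your proof is correct, and it takes a genuinely different route from the paper's on both halves of the claim, in ways that are worth noting.

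For $t\in(1,1+\eps')$, the paper does \emph{not} invoke $\mu_p$ directly; instead it lower-bounds the induced pressure $P(-t\log|DF|)\ge -tp\alpha'$ via $\mu_p$, then invokes the approximation of that pressure by truncations (as in \cite[Proposition 2.2]{IT}) to find, for given $\delta>0$, an $N$ with $P(-t\log|D\tilde F_N|)\ge -tp(\alpha'+\delta)$, and finally projects the truncated equilibrium state using $\int\tau\,d\mu_{F,t,N}\ge p$. Your observation that $\mu_p$ itself is compatible with \emph{every} truncation (since $x_p$ is a fixed point of $F$ in the first branch) short-circuits all of this: you get the stronger, $N$-independent bound $p_N(t)\ge -t\alpha'$ immediately, and the claimed inequality for that range follows for any $\delta(N)\ge 0$. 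This is simpler and sharper. You do need $p(t)\le 0$ on $[1,\infty)$ there, which you derive from Ruelle's inequality plus $\lambda(\mu)\ge 0$; in the context of the lemma ($t^+=1$, $\lambda_m=0$) one in fact has $p(t)=0$ for $t\ge 1$, so either way is fine.

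For $t\in(1-\eps',1]$, the paper simply asserts that truncations approximate the pressure and that this gives the first inequality, without addressing uniformity in $t$. Your argument is more explicit: you package $p_N(t)\ge P_{\tilde F_N}(\Psi_t)$ via Abramov--Kac with the truncated equilibrium state $\nu_N$, establish $P_F(\Psi_t)=0$ on the whole interval (lifting the equilibrium state $\mu_t$, resp.\ $\mu_{ac}$ at $t=1$, for the lower bound; the projection/integrability argument for the upper bound), and then obtain uniform convergence by Dini using monotonicity in $N$ and continuity in $t$. This is a real improvement in rigour over the paper's one-line assertion. Two technical points are worth being explicit about, both of which you correctly flag as the crux: (i) the $P_F(\Psi_t)\le 0$ direction requires that any $F$-invariant measure of positive free energy has $\int\tau\,d\mu_F<\infty$ so that it projects — for $t<1$ this follows from $p(t)>0$; at $t=1$ it follows from Ruelle's inequality $h(\mu_F)\le\int\log|DF|\,d\mu_F$ directly, so no integrability is needed; and (ii) the $P_F(\Psi_1)\ge 0$ direction requires $\mu_{ac}$ to lift, i.e.\ $\lambda(\mu_{ac})>0$, which the paper addresses in Remark~\ref{rmk:acip pos ent}. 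Neither point is a gap, but both deserve a sentence. Your route buys uniformity in $t$ essentially for free, at the modest cost of re-deriving the identity $P_F(\Psi_t)=0$, which the paper outsources to \cite{IT} and Theorem~\ref{thm:schemes}.
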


\begin{proof}
By Theorem~\ref{thm:schemes}, $P(-t\log|Df|-\tau p(t))=0$ and indeed $p_N(t)\to p(t)$ for all $t\le 1$, so the first part of the claim follows.

We now suppose that $t\ge1$.  As in \cite[Proposition 2.2]{IT}, $P(-t\log|DF|)\le 0$.  Moreover, since the equidistribution on the orbit of $x_p$ lifts to $(X,F)$, the Variational Principle implies that $P(-t\log|DF|) \ge -tp\alpha'$.
For $\delta>0$, again as in \cite[Proposition 2.2]{IT} we can find $N$, so that the scheme $(\tilde X^N, \tilde F_N, \tilde \tau_N)$ has $P(-t\log|D\tilde F_N|)\ge -tp(\alpha'+\delta)$.  Therefore an equilibrium state $\mu_{F, t,N}$ for
$(\tilde X^N, \tilde F_N, \tilde \tau_N)$ and $-t\log|D\tilde F_N|$ will have
$$h(\mu_{F, t,N})-t\int\log|D\tilde F_N|~d\mu_{F, t,N} \ge -tp(\alpha'+\delta).$$
Furthermore, by the setup, $\int\tau~d\mu_{F, t,N} \ge p$.  So projecting to $\mu_{t, N}$, a measure on the original scheme, we have
$$h(\mu_{t,N})-t\int\log|Df|~d\mu_{t,N} \ge \frac{-tp(\alpha'+\delta)}{
\int \tau~d\mu_{F, t,N}} \ge -t(\alpha'+\delta),$$
proving $p_N(t) \ge -t(\alpha'+\delta)$.
\end{proof}

The claim implies that $Dp_N(t) \to Dp(t)$ for $t<1$ also.  Since for $t<1$  we have $-Dp(t)=\lambda(\mu)>\alpha$ for $\mu$ the equilibrium state for $-t\log|Df|$, the claim implies that for $\eps'>0$ as above there exists $N$ such that there is $t\in [1, 1+\eps')$ with $-Dp_N(t)=\alpha$ and also $p_N(t) >-\eps'$.  This implies that there is an equilibrium state $\mu_\alpha$ which is a projection of $(X, F, -t\log|D\tilde F_N|)$ such that
$$h(\mu_\alpha)-\alpha\ge -\eps'.$$
Hence $L(\alpha) \ge \dim_H(\mu_\alpha) =h(\mu_\alpha)/\alpha \ge 1-\eps'/\alpha$.  The proof of the lemma concludes by setting $\eps':=\eps\alpha$.
\end{proof}

For the case where $t^+\in (1,\infty)$ and $p$ is not $C^1$ at $t=t^+$ we can apply a similar argument.  We showed in \cite[Remark 9.2]{IT} that $t^+\in (1,\infty)$ implies that $\lambda(\mu) \neq \lambda_m$ for all $\mu\in \M$.  Therefore we can use the fact that for any $\eps'>0$ there exists $\mu\in \M$ such that $\lambda(\mu)\in (\lambda_m, \lambda_m+\eps')$.  In this case we obtain the lower bound:
$$L(\alpha) \ge t^++\frac{p(t^+)}\alpha,$$
as required.
 \end{proof}

\begin{proof}[Proof of the upper bound]

In the case $\lambda_m=0$ and $\alpha\in (0, -D^-p(1))$ we showed $L(\alpha)\ge 1$, so in fact $L(\alpha)=1$.  So to complete the proof of Theorem~\ref{thm:Lyap detail}, we will prove the upper bound for $L(\alpha)$ when $\alpha\in [-D^-p(1), \lambda_M]$ and $\lambda_m$ is any value.

Let $(X, F, \tau)$ be an inducing scheme for the map $f$.  Note that the $(X,F)$ is topologically conjugated to the full-shift on a countable alphabet.
Recall that (see Section \ref{countable}) every potential $\phi: X \to \mathbb{R}$ of summable variations  and finite pressure has a Gibbs measure \cite{SarBIP}.

%We consider the pressure of inducing scheme $F$ and let \[P_F(t,q)=P_F(-t \log|DF| -q \tau).\]

\begin{rem}
Note that if $\mu_t$ is the equilibrium measure for $-t \log |Df|$ then the lifted measure $\mu_{F,t}$ is the Gibbs measure corresponding to the potential $\Phi_{t}= -t \log|DF| -P(-t \log |Df|) \tau$.  Note that $\Phi_t$ has summable variations by, for example, \cite[Lemma 8]{BTeqnat}.
\end{rem}

For an inducing scheme $(X^n,F_n,\tau_n)$ constructed as in the
proof of Theorem~\ref{thm:schemes}, consider the level set
\[J_n(\lambda) := \left\{ x \in X^n : \lim_{k \to \infty} \frac{\sum_{j=0}^{k-1} \log |DF_n(F_n^j(x))|}{\sum_{j=0}^{k-1} \tau_n(F_n^jx)} = \lambda \right\}. \]

\begin{rem}
If $\mu_t$ is the equilibrium measure for $-t \log |Df|$ and $\lambda(\mu_t) = \lambda$ then the lifted measure $\mu_{F_n,t}$ is such that
\[\mu_{F_n,t} \left( I \sm J_n(\lambda)   \right)  = 0 .\]
\end{rem}
Denote by $I_k^n(x)$ the cylinder (with respect to the Markov dynamical system $(X^n,F_n)$ of length $k$ that contains the point $x \in X$ and by
$|I_k^n(x)|$ its Euclidean length. By definition there exists a positive constant $K>0$ such that for every $x \in X$ which is not the  preimage of a boundary point and every $k \in \mathbb{N}$ we have
\[ \frac{1}{K} \leq \frac{|I_k^n(x)|}{|(F_n^k)'(x)|} \leq K.        \]

\begin{defi}
For an inducing scheme $(X^n,F_n)$ and a point $x\in X^n$ not a preimage of a boundary point of $X^n$, we define the  \emph{Markov pointwise dimension of $\mu_{F_n,t}$ at the point $x$} as
$$\delta_{\mu_{F_n,t }}(x) := \lim_{k \to \infty} \frac{\log \mu_{F_n,t}(I_k^n(x))}{-\log |I_k^n(x)|}$$
if this limit exists.
\end{defi}

\begin{lema}
The Hausdorff dimension of $J_n(\lambda)$ is given by
\[\dim_H(J_n(\lambda) ) = \frac{h(\mu_t)}{\lambda}=\delta_{\mu_{F_n,t }}(x)\]
for $\mu_{F_n,t }$-a.e. $x\in X^n$.
 \end{lema}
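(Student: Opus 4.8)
The plan is to work entirely inside the induced (countable full-shift) system, where we have a genuine Gibbs measure, and to compute a Hausdorff dimension there via the usual two-sided estimate: lower bound by the mass distribution principle (Proposition~\ref{prop:lower}) applied to $\mu_{F_n,t}$, upper bound by a natural cover of $J_n(\lambda)$ by cylinders $I_k^n$. First I would record the Gibbs property of $\mu_{F_n,t}$ for the potential $\Phi_t=-t\log|DF_n|-p(t)\tau$ (already noted in the remark above), which gives, for $x$ not a preimage of $\bd X^n$ and $k$ large,
\[ \log\mu_{F_n,t}(I_k^n(x)) = -t\sum_{j=0}^{k-1}\log|DF_n(F_n^jx)| - p(t)\sum_{j=0}^{k-1}\tau_n(F_n^jx) + O(1). \]
Combining this with the bounded-distortion estimate $\frac1K\le |I_k^n(x)|/|(F_n^k)'(x)|\le K$, i.e. $-\log|I_k^n(x)| = \sum_{j=0}^{k-1}\log|DF_n(F_n^jx)| + O(1)$, one gets for $x\in J_n(\lambda)$ with $\lambda>0$ that both the Birkhoff ratio $\frac1k\sum\log|DF_n|\to\infty$ (since $x\in J_n(\lambda)$ forces $\frac1k\sum\log|DF_n| = \lambda\cdot\frac1k\sum\tau_n\to\infty$) and
\[ \delta_{\mu_{F_n,t}}(x) = \lim_{k\to\infty}\frac{\log\mu_{F_n,t}(I_k^n(x))}{-\log|I_k^n(x)|} = t + p(t)\lim_{k\to\infty}\frac{\sum_{j=0}^{k-1}\tau_n(F_n^jx)}{\sum_{j=0}^{k-1}\log|DF_n(F_n^jx)|} = t + \frac{p(t)}{\lambda}. \]
Here I use that on $J_n(\lambda)$ the quotient $\big(\sum\tau_n\big)/\big(\sum\log|DF_n|\big)\to 1/\lambda$ by the very definition of $J_n(\lambda)$. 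By the Kac-type formula $h(\mu_t)=\int(h(\mu_{F_n,t})-\dots)$, or more directly since $\mu_{F_n,t}$ is the equilibrium state and $p(t)=P(-t\log|Df|)$, the Abramov and Kac formulas give $p(t) = \frac{h(\mu_{F_n,t})-t\lambda\int\tau_n\,d\mu_{F_n,t}}{1}$ — more usefully, $\frac{h(\mu_t)}{\lambda}=t+\frac{p(t)}{\lambda}$ follows from $h(\mu_t)=h(\mu_{F_n,t})/\int\tau_n\,d\mu_{F_n,t}$, $\lambda(\mu_t)=\int\log|DF_n|\,d\mu_{F_n,t}/\int\tau_n\,d\mu_{F_n,t}$, and $P(-t\log|DF_n|-p(t)\tau_n)=0$. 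Thus $\delta_{\mu_{F_n,t}}(x)=h(\mu_t)/\lambda$ for $\mu_{F_n,t}$-a.e.\ $x$.

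With the pointwise (Markov) dimension identified, the lower bound $\dim_H(J_n(\lambda))\ge h(\mu_t)/\lambda$ follows from Proposition~\ref{prop:lower}: since $\mu_{F_n,t}(X^n\setminus J_n(\lambda))=0$ (the remark above) and $\mu_{F_n,t}$ is ergodic, $\mu_{F_n,t}$-a.e.\ point of $J_n(\lambda)$ has Markov pointwise dimension $h(\mu_t)/\lambda$; a standard comparison between cylinders $I_k^n(x)$ and Euclidean balls $B(x,r)$ (using bounded distortion and the fact that consecutive cylinder lengths are comparable, which holds because the inducing map has bounded distortion and $\bd X^n\notin(X^n,F_n)^\infty$) upgrades this to $d_{\mu_{F_n,t}}(x)=h(\mu_t)/\lambda$ a.e., whence $\dim_H(\mu_{F_n,t})=h(\mu_t)/\lambda$ and so $\dim_H(J_n(\lambda))\ge h(\mu_t)/\lambda$. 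For the upper bound, fix small $\eta>0$ and cover $J_n(\lambda)$ as follows: for each $x\in J_n(\lambda)$ and all large $k$, $\mu_{F_n,t}(I_k^n(x))\ge |I_k^n(x)|^{h(\mu_t)/\lambda+\eta}$ fails only finitely often, so by a Borel–Cantelli / exhaustion argument one covers $J_n(\lambda)$ (up to the boundary preimages, a countable set of dimension $0$) by cylinders on which $|I_k^n|^{s}\le \mu_{F_n,t}(I_k^n)$ with $s=h(\mu_t)/\lambda+\eta$; summing, $\sum|I_k^n|^s\le \sum\mu_{F_n,t}(I_k^n)\le 1$ over a disjointified subfamily, giving $\mathcal H^{s}(J_n(\lambda))<\infty$ and $\dim_H(J_n(\lambda))\le h(\mu_t)/\lambda+\eta$. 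Let $\eta\to0$.

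The main obstacle I anticipate is the non-compactness of the shift: the cylinders $I_k^n(x)$ can be arbitrarily small relative to their "generation" $k$, so passing between the symbolic/Markov pointwise dimension $\delta_{\mu_{F_n,t}}$ and the genuine Euclidean pointwise dimension $d_{\mu_{F_n,t}}$ (balls vs.\ cylinders), and more seriously running the covering argument for the upper bound uniformly, requires care — one must control how many cylinders of a given length are needed and ensure the "bad set" where the Gibbs/length comparison has not yet kicked in is negligible. This is handled by the bounded distortion constant $K$ being uniform in the branch, by restricting first to the truncated schemes with finitely many branches and exhausting, and by the hypothesis $\bd X^n\notin(X^n,F_n)^\infty$ which guarantees the problematic boundary preimages form a set we can discard. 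Everything else is the standard Legendre/thermodynamic bookkeeping already available from Theorems~\ref{thm:ITeq_exist_unique}–\ref{thm:IT press conv} and \ref{thm:schemes}.
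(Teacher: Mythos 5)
Your computation of $\delta_{\mu_{F_n,t}}$ from the Gibbs property and bounded distortion, and the algebraic reduction to $h(\mu_t)/\lambda$ via the equilibrium identity $p(t)=h(\mu_t)-t\lambda$, match the paper exactly. The final conclusion via Propositions~\ref{prop:upper}--\ref{prop:lower} is also the same in spirit, although you reprove the upper bound with an explicit covering rather than just invoking Proposition~\ref{prop:upper} once the pointwise dimension is known on all of $J_n(\lambda)$.

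The genuine gap is in your passage from the Markov pointwise dimension $\delta_{\mu_{F_n,t}}$ to the Euclidean pointwise dimension $d_{\mu_{F_n,t}}$. You justify it by asserting that ``consecutive cylinder lengths are comparable,'' attributing this to bounded distortion and $\bd X^n\notin(X^n,F_n)^\infty$. That assertion is false for a countable full shift: a $k$-cylinder contains countably many $(k+1)$-cylinders whose lengths have no lower bound relative to the parent, and bounded distortion (which only controls ratios of $DF_n$ within a single cylinder) and the boundary condition say nothing about this. The correct statement, and the one actually used, is a result of Pollicott and Weiss \cite{PolWe}: if both $\delta_{\mu_{F_n,t}}(x)$ and the Lyapunov exponent $\lambda(x)$ exist, then $d_{\mu_{F_n,t}}(x)=\delta_{\mu_{F_n,t}}(x)$. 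Their argument controls $\log|I_{k+1}^n(x)|/\log|I_k^n(x)|\to 1$ not through a ratio bound on neighbouring cylinder lengths, but through the finiteness of the Lyapunov exponent, which forces the single increment $\log|DF_n(F_n^k x)|$ to be $o\bigl(\sum_{j<k}\log|DF_n(F_n^j x)|\bigr)$. Citing or reproducing that argument is what is needed; as written, your step would not withstand scrutiny, precisely because it is the one point at which non-compactness really bites, which you yourself flagged as the ``main obstacle.''
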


\begin{proof}
Let $x \in J_n(\lambda)$ and $\mu_{F_n,t}$ be the Gibbs measure with respect to $\Phi_{t, n}:=-t \log|DF_n| -P(-t \log |Df|) \tau_n$. Since we have bounded distortion, the Markov pointwise dimension of $\mu_{F_n,t}$ at the point $x\in X^n$, if it exists, is
\begin{align*}
\delta_{\mu_{F_n,t}}(x) &= \lim_{k \to \infty} \frac{\log \mu_{F_n,t}(I_k^n(x))}{-\log |I_k^n(x)|} = \lim_{k\to \infty} \frac{\sum_{i=0}^{k-1}\Phi_{t,n}(F_n^i(x))}{-\log|DF_n^k(x)|}\\
&= \lim_{k \to \infty} \frac{-t \log |DF_n^k(x)|-P(-t \log |Df|)\sum_{i=0}^{k-1} \tau_n(F_n^i (x))}{-\log |DF_n^k(x)|} \\
&=t + P(-t \log |Df|) \lim_{k \to \infty} \frac{\sum_{i=0}^{k-1}  \tau_n(F_n^i (x))}{\log |DF_n^k(x)|}\\
&= t+ \left( h(\mu_t) -t  \int  \log |Df|  d\mu_t \right)  \lim_{k \to \infty} \frac{\sum_{i=0}^{k-1}  \tau_n(F_n^i (x))}{\log |DF_n^k(x)|}. \end{align*}
But since $x \in J_n(\lambda)$ we have that
\[  \lim_{k \to \infty} \frac{\sum_{i=0}^{k-1}  \tau_n(F_n^i (x))}{\log |DF_n^k(x)|} = \lambda.\]
Therefore,
\[\delta_{\mu_{F_n,t} }(x) = t + \frac{h(\mu_t) -t \lambda}{\lambda}= \frac{h(\mu_t)}{\lambda}.\]
The following result was proved by Pollicott and Weiss \cite{PolWe}. Suppose that $\delta_{\mu_{F_n,t}}(x)$ and $\lambda(x)$ exist, then
\[d_{\mu_{F_n,t}}(x) = \delta_{\mu_{F_n,t}}(x).\]
Therefore, we have that for every point $x \in J_n(\lambda)$ the pointwise dimension is given by
\[d_{{\mu_{F_n,t}}}(x) =  \frac{h(\mu_t)}{\lambda}.\]
Since  $\mu_{F_n,t}(X^n \sm J_n(\lambda))=1$ we have that
\[\dim_H(J_n(\lambda) )= \frac{h(\mu_t)}{\lambda},\]
as required.
\end{proof}

\iffalse
\begin{rem}[Inducing schemes see most points with positive Lyapunov exponent]
Todd (see \cite[Proposition 5]{T}) proved that inducing schemes contain all the multifractal information of the Lyapunov spectrum. In particular this is true for the set of inducing schemes $(X^n,F_n)$ constructed in the proof of Proposition~\ref{prop:unique}.   Indeed, the level set $J(\lambda)$
can be decomposed into two disjoint sets $Lift(\lambda)$ \textbf{MT: I changed $Li$ to $Lift$ to try to ensure there's no confusion about dependence on $i$} (the set of liftable points) and $NLift(\lambda)$ (the set of non-liftable points), such that
\[\dim_H(J(\lambda)) = \dim_H(Lift(\lambda) \cup NLift(\lambda)) = \dim_H(Lift(\lambda)).\]
Here the set $Lift(\lambda)$ contain all the points that can be lifted to one of the inducing schemes $(X^n,F_n)$.
\end{rem}
\fi

Note that the projection map $\pi_n : X^n \to I$ from each inducing scheme $(X^n, F_n)$ into the interval $I$ is a  bilipschitz map. Therefore,
\[\dim_H(\pi_n (J_n(\lambda))) = \frac{h(\mu_t)}{\lambda}.\]
By Theorem~\ref{thm:schemes} c),
we obtain the desired upper bound,
\begin{align*}
\dim_H(J(\lambda))
& \leq \sup_n\left\{\dim_H\left(\pi (J_n(\lambda))\right)\right\} \leq
\frac{h(\mu_t)}{\lambda}.\end{align*}
\end{proof}

\begin{rem}
It is a direct consequence of the results of Barreira and Schmeling \cite{BarSc} that the set $J'$ has full Hausdorff dimension.
\end{rem}

\section{The pointwise dimension spectrum}
\label{sec:dim spec intro}

 In this section we give details of the notions of multifractal spectrum of the pointwise dimension of equilibrium states.   As in \cite{T}, this can be seen as a generalisation of the results on the Lyapunov spectrum. The \emph{pointwise dimension} of the measure $\mu$ at the point $x \in I$ is defined by
\begin{equation} \label{puntual}
 d_{\mu}(x) := \lim_{r \rightarrow 0} \frac{ \log \mu((x-r,x+r))}{\log r},
 \end{equation}
provided the limit exists.  This function describes the power law behaviour of the measure of an interval,
\[\mu((x-r, x+r)) \sim r^{d_{\mu}(x)}.\]
The pointwise dimension induces a decomposition of the space  into level sets:
\[ K(\alpha) = \{ x \in \Sigma : d_{\mu}(x) = \alpha \} \textrm{ , }K' =\{ x \in \Sigma : \textrm{the limit } d_{\mu}(x)  \textrm{ does not exist} \}. \]
The set $K'$ is called the \emph{irregular set}. The decomposition:
\[ I = \Big( \bigcup_{\alpha}K(\alpha) \Big) \bigcup K' \]
is called the \emph{multifractal decomposition}. The \emph{multifractal spectrum of pointwise dimension} is defined by
\[ \dimspec_{\mu}(\alpha)= \dim_{H}(K(\alpha)).\]
 %where  $\dim_{H}$ denotes the Hausdorff dimension.

\iffalse
This function has been studied for a wide range of systems. The case in which the map is uniformly hyperbolic has been studied by several authors, for an account and references see the books of Barreira \cite{Bar} and of Pesin \cite{Pesbook} and also the work of Olsen \cite{Ols}. The case of the Manneville Pomeau map (non-uniformly hyperbolic map) was studied by Nakaishi \cite{Nak},  by Pollicott and Wiess \cite{PolWe} and by Jordan and Rams \cite{JoRa}.
The case of Horseshoes with a parabolic fixed point was considered in Barreira and Iommi \cite{BarIo}. Multifractal analysis of pointwise dimension was also considered in  the countable Markov shift setting by Hanus, Mauldin and  Urba\'nski and \cite{HaMauUr} and by Iommi \cite{Iom}.   For general piecewise continuous maps, analysis of this type was addressed in \cite{HoRaSt}.  For multimodal maps the multifractal analysis of pointwise dimension study began with the work of Todd \cite{T}.
\fi

Let us stress that for maps $f \in \mathcal{F}$, points which are not `seen' by inducing scheme are beyond our analysis.  However, as in \cite{T}, this set of points can often be shown to be negligible. This is indeed the case, Rivera-Letelier and Shen \cite{rivshen} have recently proved that
\[\dim_H \left\{ x \in [0,1] : \overline{\lambda}_f(x) =0 \right\} =0, \]
so that by Theorem~\ref{thm:schemes}(c), our inducing schemes capture all sets of positive Hausdorff dimension.

% but in case it is not, we define
% $$\tilde K(\alpha):=\{\overline\lambda(x)>0\}\cap K(\alpha) \text{ and } \tilde\dimspec_\mu(\alpha):= \dim_H(\tilde K(\alpha)).$$

In order to describe the function $\dimspec_{\mu}$ we will study an auxiliary function:  the so-called \emph{temperature function} is defined in terms of the thermodynamic formalism and shown to be the Legendre-Fenchel transform of the multifractal spectrum.

\section{The temperature function} \label{sec:temp}

In this Section we study the  \emph{temperature function} which allows us to describe the multifractal spectrum.  Firstly we need to establish the existence of the measures that we are going to analyse. The measures we will study will be equilibrium states.  The class of potentials we consider is
$$\P:=\big\{\phi:I \to [\phi_{\min}, \phi_{\max}] \text{ for some } \phi_{\min}, \phi_{\max}\in (-\infty, 0) \text{ and } P(\phi)=0\big\}.$$
Note that any bounded potential $\phi'$ with $\phi'<P(\phi')$ can be translated into this class by setting $\phi:=\phi'-P(\phi')$.  Any equilibrium state for $\phi'$ is an equilibrium state for $\phi$.

We let $\P_H\subset \P$ be the set of H\"older potentials on $I$.
It is well known  (see for example \cite[Section 4]{Kellbook}) that potentials in $\P$ have (potentially many) equilibrium states with positive entropy.  Theorem~\ref{thm:eq cts pots} shows this.

\iffalse
\begin{teo}
Suppose $f\in \F$ and $\phi\in \P_H$.  Then there is a unique equilibrium state $\mu_\phi$ for $(I, f,\phi)$.  Moreover $h(\mu_\phi)>0$.
\label{thm:eq for phi}
\end{teo}
\fi

We will not give the details of the proof of Theorem~\ref{thm:eq cts pots} since it can be proved as in Theorem~\ref{thm:unique mu_q}.  However, note that it follows using the inducing techniques as in \cite[Section 5]{IT}.  Note that by \cite[Lemma 3]{BTeqgen}, the H\"older condition on $\phi$ guarantees the summable variations for the inducing schemes.

As in the introduction, the \emph{temperature function with respect to $\phi$} is the function $T_\phi:\mathbb{R} \to \mathbb{R} \cup \{ \infty \}$ implicitly defined, for $q \in \mathbb{R}$, by the equation
\[T_\phi(q)= \inf \{ t \in \mathbb{R} : P(-t \log |Df| +q \phi) =0\}. \]
If for a fixed $q$ and for every $t \in \mathbb{R}$ we have that $P(-t \log |Df| + q \phi) >0$ then $T_\phi(q)= \infty$.
If there exists a finite number $$q_\infty:=\sup\{q\in \R:T_\phi(q)=\infty\},$$ then we say that $T_\phi$ has an \emph{infinite phase transition} at $q_\infty$.

\iffalse
\begin{rem}
Note that for $\phi\in \P$ we have $T_\phi(1)=0$, since by definition $P(\phi)=0$.  Moreover, $T_\phi(0)$ is the smallest root of the Bowen equation $P(-t \log |Df|)=0$.  In principle it could happen that $P(-t_0\log|Df|)=0$ for $t_0<1$, in which case $T_\phi(0)<1$. However, we would usually expect $T_\phi(0)=1$: for example when there exists an acip with positive Lyapunov exponent (see \cite[Theorem 3]{Led}).  Moreover, for any unimodal map with quadratic critical point (i.e. $\ell_c=2$), we have $T_\phi(0)=1$, see \cite[Proposition 1.2]{IT}. The assumption of quadratic critical point was made to rule out the possibility of a `wild attractor'.  It follows from the statement of \cite[Theorem 10.8(3)]{AvLyu} that there are maps in $\F$ with $\ell_c>2$ with a wild attractor for which $T_\phi(0)<1$.
\end{rem}
\fi

\begin{rem}
Note that for $\phi\in \P$ we have $T_\phi(1)=0$, since by definition $P(\phi)=0$.  Moreover, $T_\phi(0)$ is the smallest root of the Bowen equation $P(-t \log |Df|)=0$.  It follows from the statement of \cite[Theorem 10.8(3)]{AvLyu} that there are unimodal maps in $\F$ with critical order $\ell_c>2$ for which $T_\phi(0)<1$.  This phenomenon is associated to the presence of a `wild attractor'.  For any unimodal map with quadratic critical point (i.e. $\ell_c=2$), there is no wild attractor and we have $T_\phi(0)=1$.  This is also true for any map in $\F_{ac}$.
\end{rem}

\begin{rem}
Note that $q'\le q$ implies $T_\phi(q')\ge T_\phi(q)$.  Therefore if $T_\phi$ has a phase transition at $q_\infty$ then $T(q)=\infty$ for all $q<q_\infty$.
\end{rem}

\begin{eje}[Regular]
Let $f:I \to I$ be a Collet-Eckmann unimodal map. Then the pressure function $t \to p(t)$ is strictly decreasing as in Theorem~\ref{thm:IT press conv}.  Moreover, $p$ is $C^1$ in an interval $(-\infty, t^+)\supset [0,1]$.  Consider the potential $\phi=-\htop(f)$ (that is, minus the topological entropy of the map $f$). In this case the function $T_\phi(q)$ is obtained by the equation in the variable $t \in \mathbb{R}$ given by \[P(-t \log |Df|)= q \htop(f).\]
For every $q \in \mathbb{R}$ this equation has a unique solution. Moreover, for $q$ in a neighbourhood of $[0,1]$, by Theorem~\ref{thm:ITeq_exist_unique} there exists a unique equilibrium state $\mu_{\phi_q}$ corresponding to the potential $\phi_q=-T_\phi(q) \log |Df| -q \htop(f)$.
\end{eje}

\iffalse
If $T_\phi(q) < \infty$ we use the following notation $\phi_{q}:=-T_\phi(q) \log |Df| + q\phi$.
The temperature function can be of three different types.

\textbf{MT: remove from here:}
\begin{teo}
Let $f:I \to I$ be a multimodal map and $\phi : I \to \mathbb{R}$ be a potential belonging to the class $\mathcal{P}$. Then the temperature function, $T_\phi(q)$, is either
\begin{enumerate}
\item strictly concave for $q \in \mathbb{R}$, in which case we say that $T_\phi(q)$ is \emph{regular};
\item it is infinite for $q<0$ and finite for $q \geq 0$, in which case we say that $T_\phi(q)$ has an \emph{infinite phase transition};
\item there exists $q_\phi^{-}, q_\phi^{+}$ such that $T_\phi(q)$ is strictly concave in $(q_\phi^-, q_\phi^+)$ and linear in the complement. In this case we say that $T_\phi(q)$ has \emph{finite phase transitions}.
\end{enumerate}
\end{teo}

\textbf{MT:Serious problems in the next couple of pages.  Not gone through that yet.}

In order to prove this Theorem we will show that  the three cases are possible and that  they exhaust all possibilities.

\textbf{MT: to here}
\fi

\begin{eje}[Infinite phase transition] \label{infinite_phase}
If $f\in \F$ is a unimodal map which is not Collet-Eckmann, then results in \cite{NoSa} imply
\[
P(-t\log |Df|)=
\begin{cases}
\text{positive} & \text{ if } t < 1,\\
0 & \text{ if } t \ge 1.
\end{cases}
\]
If we consider the constant potential $\phi:= -\htop(f)$ then for every $q <0$ we have that
\[P(-t \log |Df| +|q| \htop(f) ) =P(-t \log |Df|) +|q| \htop(f) \geq |q| \htop(f) > 0.\]
That is,
\[
T_\phi(q)=
\begin{cases}
\text{infinite} & \text{ if } q<0,\\
finite & \text{ if } q \ge 0.
\end{cases}
\]
In this case the function $T_\phi(q)$ has an infinite phase transition.
\end{eje}
We stress that infinite phase transitions can only occur at $q=0$. This is contained in the following proposition where we collect some basic properties of $T_\phi$.

\begin{prop}\label{prop:phase trans at 0}
Suppose that $f\in \F$ and $\phi\in \P$.  Then
\begin{enumerate}[a)]
\item $T_\phi(q)\in \R$ for all $q\ge 0$;
\item the function $T_\phi(q)$ can only have an infinite phase transition at $q_\infty=0$;
\item $T_\phi$, when finite, is strictly decreasing.
\end{enumerate}
\end{prop}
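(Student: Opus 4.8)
The plan is to exploit the trivial-but-crucial facts that $P(0\cdot\log|Df|+q\phi)=P(q\phi)$ is finite for every $q\in\R$ (since $\phi$ is bounded, $P(q\phi)\le \htop(f)+|q|\max\{|\phi_{\min}|,|\phi_{\max}|\}<\infty$), that $\phi<0$ everywhere, and that the map $t\mapsto P(-t\log|Df|+q\phi)$ is convex, non-increasing, and — by Theorem~\ref{thm:IT press conv} applied to the potential $-t\log|Df|$ shifted by the bounded term $q\phi$ (which does not affect finiteness of the pressure) — strictly decreasing on the range where the relevant equilibrium states have positive Lyapunov exponent. First I would record these monotonicity and convexity properties of $t\mapsto P(-t\log|Df|+q\phi)$, together with the elementary bound $P(-t\log|Df|+q\phi)\le P(-t\log|Df|)+q\phi_{\min}$ for $q\ge0$ and $\le P(-t\log|Df|)+q\phi_{\max}$ for $q\le0$ coming from $\phi_{\min}\le\phi\le\phi_{\max}<0$.

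For part (a): fix $q\ge 0$. Since $\phi\le\phi_{\max}<0$, for any $\mu\in\M_f$ we have $\int q\phi~d\mu\le q\phi_{\max}\le 0$, hence $P(-t\log|Df|+q\phi)\le P(-t\log|Df|)+q\phi_{\max}$. Now take $t$ large enough that $P(-t\log|Df|)\le -q\phi_{\max}$ — this is possible because $p(t)=P(-t\log|Df|)$ is non-increasing and, by the description of the pressure function in the introduction (linear with negative slope $-\lambda_m$ past $t^+$, or already tending to $-\infty$, and in any case $p(t)\to-\infty$ or is bounded by a value we can undercut by pushing $q\phi_{\max}$; more carefully, $p(t)\le p(t^+)-(t-t^+)\lambda_{\inf}$ once $t>t^+$, and this $\to-\infty$ unless $\lambda_{\inf}=0$, the latter case needing the separate observation that then $t^+=1$ and $p\equiv 0$ on $[1,\infty)$, so one instead uses $q>0$ strictly; for $q=0$, $T_\phi(0)$ is finite by the Bowen-equation remark). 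Then $P(-t\log|Df|+q\phi)\le 0$, so by the definition of $T_\phi$ as an infimum, $T_\phi(q)\le t<\infty$.

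For part (b): suppose $T_\phi$ has an infinite phase transition at $q_\infty$, i.e.\ $T_\phi(q)=\infty$ for $q<q_\infty$ and finite for $q\ge q_\infty$ (using the monotonicity remark preceding the proposition). By part (a), $q_\infty\le 0$. Suppose for contradiction $q_\infty<0$; pick $q$ with $q_\infty<q<0$, so $T_\phi(q)<\infty$, meaning there is some $t_0$ with $P(-t_0\log|Df|+q\phi)\le 0$. For $q'<q$ (still negative), write $q'\phi=q\phi+(q'-q)\phi$; since $q'-q<0$ and $\phi<0$ we have $(q'-q)\phi>0$, but it is bounded above by $(q'-q)\phi_{\min}$, whence $P(-t_0\log|Df|+q'\phi)\le P(-t_0\log|Df|+q\phi)+(q'-q)\phi_{\min}<\infty$; then increase $t_0$ by the finite amount needed to push this pressure back down to $\le0$, using that increasing $t$ strictly decreases (or at worst does not increase, and eventually strictly decreases) the pressure. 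This gives $T_\phi(q')<\infty$ for all $q'<q_\infty$, contradicting the phase transition. Hence $q_\infty=0$.

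For part (c): this is where the real content lies, and I expect it to be the main obstacle. Fix $q$ with $T_\phi(q)$ finite; I want $T_\phi$ strictly decreasing near $q$. The infimum defining $T_\phi(q)$ is attained, $P(-T_\phi(q)\log|Df|+q\phi)=0$, by continuity of $t\mapsto P(-t\log|Df|+q\phi)$ (upper semicontinuity of pressure plus the convexity/finiteness giving continuity on the interior of the finiteness domain) — this needs a small argument that the pressure is not identically positive then jumps, which follows from convexity and part (a)-type bounds. Given the root $t^*=T_\phi(q)$, take $q'>q$. Then $-t^*\log|Df|+q'\phi = (-t^*\log|Df|+q\phi)+(q'-q)\phi$, and since $q'-q>0$, $\phi<\phi_{\max}<0$, we get $(q'-q)\phi\le(q'-q)\phi_{\max}<0$, so $P(-t^*\log|Df|+q'\phi)\le P(-t^*\log|Df|+q\phi)+(q'-q)\phi_{\max}=(q'-q)\phi_{\max}<0$. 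Since the pressure at $t^*$ is already negative and $t\mapsto P(-t\log|Df|+q'\phi)$ is non-increasing and continuous, the root $T_\phi(q')$ satisfies $T_\phi(q')\le t^*$; moreover the strict negativity $(q'-q)\phi_{\max}<0$ forces $T_\phi(q')<t^*$ strictly, because between $T_\phi(q')$ and $t^*$ the pressure must drop from $0$ to a strictly negative value, so these two values cannot coincide. Hence $T_\phi(q')<T_\phi(q)$, i.e.\ $T_\phi$ is strictly decreasing on its (interval) domain of finiteness. The one delicate point to nail down carefully is the attainment and continuity of the root, i.e.\ that $\{t:P(-t\log|Df|+q\phi)\le 0\}$ is a closed half-line $[T_\phi(q),\infty)$ with $P=0$ at the endpoint; this rests on convexity of $t\mapsto P(-t\log|Df|+q\phi)$ together with the fact, from Theorem~\ref{thm:ITeq_exist_unique} and the structure of $p(t)$, that the pressure is finite and decreasing for $t$ below the relevant threshold and strictly negative for $t$ large, so by the intermediate value theorem it has a unique zero.
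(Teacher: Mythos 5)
Your overall strategy — using the bounds $\phi_{\min}\le\phi\le\phi_{\max}<0$ together with convexity and monotonicity of $t\mapsto P(-t\log|Df|+q\phi)$ — is the right one, and parts (a) and (c) are essentially the paper's argument (though for (c) the paper uses the one-line computation
$P(-T_\phi(q)\log|Df|+(q+\delta)\phi)\le P(-T_\phi(q)\log|Df|+q\phi)+\delta\phi_{\max}<P(-T_\phi(q)\log|Df|+q\phi)$,
which is what you do with different bookkeeping). Two points, one small, one substantive.

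First, for part (a) the statement is $T_\phi(q)\in\R$, i.e.\ two bounds. You only argue $T_\phi(q)<\infty$; you never rule out $T_\phi(q)=-\infty$. The paper handles this by noting $\lim_{t\to-\infty}P(-t\log|Df|)=\infty$, and combining with $P(-t\log|Df|+q\phi)\ge P(-t\log|Df|)+q\phi_{\min}$ to find $t_0$ with the pressure strictly positive, so the infimum is taken in a bounded interval $(t_0,1]$. (The paper actually gets $T_\phi(q)\le 1$ for free because $q\phi\le 0$ implies $P(-t\log|Df|+q\phi)\le p(t)$ and $p(1)\le 0$; your $\phi_{\max}$ bound works too but is more convoluted than needed.)

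The substantive gap is in part (b). You suppose $q_\infty<0$ and, starting from a finite root for some $q\in(q_\infty,0)$, try to propagate finiteness of $T_\phi$ to all $q'<q$. The propagation step is: add the finite amount $(q'-q)\phi_{\min}$ to the pressure at $t_0$, then "increase $t_0$ by the finite amount needed to push this pressure back down to $\le 0$, using that increasing $t$ strictly decreases (or at worst does not increase, and eventually strictly decreases) the pressure." That last clause is not true in general, and it is precisely the crux of the proposition. When $\lambda_m=0$ the function $p(t)=P(-t\log|Df|)$ is bounded below (and in fact $p(t)\ge 0$ for all $t$), so $P(-t\log|Df|+q'\phi)\ge p(t)+q'\phi_{\max}\ge q'\phi_{\max}>0$ for all $t$ whenever $q'<0$; the pressure never reaches $\le 0$ and cannot be "pushed down." To make your contradiction argument work you would first have to show that the hypothesis $q_\infty<0$ (equivalently, $T_\phi(q)<\infty$ for some $q<0$) already forces $\lambda_m>0$ and hence $p(t)\to-\infty$; this is a genuine extra step that you omit. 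The paper avoids this loop by splitting directly on $\lim_{t\to+\infty}p(t)$: if it is $-\infty$ a separate lemma gives $T_\phi(q)<\infty$ for all $q$ (no phase transition); if it is finite one deduces $\lambda_m=0$, hence $p\ge 0$ everywhere, hence $T_\phi(q)=\infty$ for all $q<0$, so the transition is exactly at $0$. I would recommend restructuring (b) along those lines rather than patching the contradiction argument.
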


 We will use the following two Lemmas.
\begin{lema} \label{lem:T finite q pos}
Suppose that $f\in \F$ and $\phi\in \P$.  If $q \geq 0$ then the function $T_\phi(q)$ is finite.
\end{lema}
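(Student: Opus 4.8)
The plan is to show that for each fixed $q \ge 0$ there is some $t \in \R$ with $P(-t\log|Df| + q\phi) \le 0$; since the pressure function $t \mapsto P(-t\log|Df| + q\phi)$ is convex (hence continuous where finite) and decreasing in $t$ — larger $t$ makes $-t\log|Df|$ more negative, as $\log|Df|$ is essentially positive on the relevant part of the dynamics — this will force the infimum defining $T_\phi(q)$ to be a finite real number. First I would recall the variational expression
\[
P(-t\log|Df| + q\phi) = \sup\left\{h(\mu) - t\lambda(\mu) + q\int\phi~d\mu : \mu \in \M_f\right\},
\]
and use that $\phi \le \phi_{\max} < 0$ (from the definition of $\P$), so $q\int\phi~d\mu \le q\phi_{\max} \le 0$ for $q \ge 0$. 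Thus $P(-t\log|Df| + q\phi) \le P(-t\log|Df|) = p(t)$ for all $q \ge 0$.

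The key point is then that $p(t) \le 0$ for all sufficiently large $t$. Since $f \in \F$ has no parabolic points and $P(\phi) = 0$ with $\phi \in \P$ bounded, one knows (this is part of the thermodynamic formalism set up in \cite{IT}; see the discussion around Theorem~\ref{thm:IT press conv} and the fact that $p$ is linear with slope $-\lambda_m$ on $(t^+,\infty)$) that $p(t) \to -\infty$ as $t \to +\infty$ whenever $\lambda_m > 0$, and $p(t) = 0$ for $t \ge t^+ = 1$ in the borderline case $\lambda_m = 0$. In either case $p(t) \le 0$ for all $t \ge \max\{t^+, 1\}$. Hence for any $q \ge 0$ and any such $t$ we get $P(-t\log|Df| + q\phi) \le p(t) \le 0$, so
\[
T_\phi(q) = \inf\{t : P(-t\log|Df| + q\phi) = 0\} \le \max\{t^+, 1\} < \infty,
\]
using that the pressure is continuous in $t$ and non-negative for $t$ below the value where it first hits $0$ (so the infimum is attained as an honest root, or at worst the pressure is $0$ throughout an interval whose left endpoint is finite). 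The lower bound $T_\phi(q) > -\infty$ is immediate: for $t$ very negative, $-t\log|Df|$ dominates and $P(-t\log|Df|+q\phi) \ge -t\lambda_m + q\phi_{\min} > 0$ once $t$ is sufficiently negative (taking any measure, e.g. one of positive Lyapunov exponent), so the set in the infimum is bounded below.

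The main obstacle is making precise the behaviour of $p(t)$ for large $t$ and ensuring the comparison $P(-t\log|Df|+q\phi) \le p(t)$ genuinely gives a root rather than an asymptote — i.e., ruling out the possibility that the pressure stays strictly positive for all finite $t$ and only tends to $0$ at infinity. This is handled by the observation above that $\phi$ is bounded strictly below $0$, so in fact $P(-t\log|Df|+q\phi) \le p(t) + q\phi_{\max} \cdot 0$... more usefully, one can be slightly more careful and note $P(-t\log|Df| + q\phi) \le p(t)$ already suffices since $p(t)$ itself is known to vanish (not merely tend to zero) for $t \ge t^+$. So the finiteness of $t^+$ from Theorem~\ref{thm:ITeq_exist_unique}, together with monotonicity and continuity of the pressure, closes the argument.
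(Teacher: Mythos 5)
Your proof is essentially correct and follows the same route as the paper: use $q\phi\le 0$ to dominate the pressure by $p(t)=P(-t\log|Df|)$ for the upper bound, bound from below by $p(t)+q\phi_{\min}$ and let $t\to-\infty$, and conclude by continuity of $t\mapsto P(-t\log|Df|+q\phi)$ and the intermediate value theorem. Two small points worth noting: the paper gets the upper bound more directly from the single fact $p(1)=P(-\log|Df|)\le 0$ (which holds because $h(\mu)\le\lambda(\mu)$ for every $\mu\in\M_f$ by Ruelle's inequality together with $\lambda(\mu)\ge 0$) and monotonicity of $p$, rather than via $t^+$ and the linear asymptotics of $p$, which you do not actually need; and in your lower-bound step the expression $-t\lambda_m+q\phi_{\min}$ fails to be positive when $\lambda_m=0$, so one must do exactly what you say parenthetically, namely test against a fixed measure of positive Lyapunov exponent (e.g.\ a repelling periodic orbit) to see $p(t)\to\infty$ as $t\to-\infty$ — the paper simply invokes this limit outright.
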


\begin{proof}
If $q \geq 0$ then $q\phi \leq 0$. Therefore,
\[P(-t \log |Df| +q \phi) \leq P(-t \log |Df|).\]
Since $P(- \log |Df| ) \leq 0$ and $t\mapsto P(-t\log|Df|)$ is decreasing, this implies that $T_\phi(q)\le 1$.  It remains to check  $T_\phi(q)\neq -\infty$.

We have
\[P(-t \log |Df| +q \phi) \geq P(-t \log |Df| +q\phi_{\min}) = P(-t \log |Df|) +q \phi_{\min}.\]
Since
\[\lim_{t \to -\infty} P(-t \log |Df|) = \infty, \]
there exists $t_0 <0$ such that
\[P(-t_0 \log |Df|)- q \phi_{\min} > 0.\]
That is
\[ P(-t_0 \log |Df| + q \phi) >0.\]
Since the function $t \to P(-t \log |Df| + q \phi)$ is continuous, the intermediate value theorem implies that there exists $T_\phi(q) \in (t_0, 1]$ such that
\[T_\phi(q) = \inf \{ t \in \mathbb{R} : P(-t \log |Df| +q \phi) = 0 \}, \]
as required.
\end{proof}

\begin{lema} \label{lem:T finite}
Suppose that $f\in \F$ and $\phi\in \P$.
If $$\lim_{t \to +\infty} P(-t \log |Df|)= - \infty$$ then $T_\phi(q)$ is finite for every $q \in \mathbb{R}$.
\end{lema}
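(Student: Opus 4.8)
The plan is to reduce immediately to the case $q<0$, since Lemma~\ref{lem:T finite q pos} already establishes that $T_\phi(q)$ is finite for all $q\ge 0$ (and uses none of the extra hypothesis). So fix $q<0$. Because $\phi\in\P$ is bounded with $\phi_{\min}\le \phi\le \phi_{\max}<0$, multiplying through by the negative number $q$ reverses the inequalities, giving $q\phi_{\max}\le q\phi\le q\phi_{\min}$, where now both of the extreme terms are \emph{positive} constants. Using monotonicity of the pressure in the potential together with the identity $P(\psi+c)=P(\psi)+c$ for constants $c$, this sandwiches the relevant pressure function between two affine translates of $p$:
\[ p(t)+q\phi_{\max}\ \le\ P(-t\log|Df|+q\phi)\ \le\ p(t)+q\phi_{\min}. \]

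Next I would exploit the two asymptotics of $p$. From the hypothesis $\lim_{t\to+\infty}p(t)=-\infty$ and the upper bound above, $P(-t\log|Df|+q\phi)\to-\infty$ as $t\to+\infty$; in particular there is $t_1$ with $P(-t_1\log|Df|+q\phi)<0$. From the fact $\lim_{t\to-\infty}p(t)=+\infty$ (already invoked in the proof of Lemma~\ref{lem:T finite q pos}) and the lower bound, $P(-t\log|Df|+q\phi)\to+\infty$ as $t\to-\infty$; in particular there is $t_0<t_1$ with $P(-t_0\log|Df|+q\phi)>0$. Since $t\mapsto P(-t\log|Df|+q\phi)$ is continuous, the intermediate value theorem produces $t^\ast\in(t_0,t_1)$ with $P(-t^\ast\log|Df|+q\phi)=0$, so the set $\{t:P(-t\log|Df|+q\phi)=0\}$ is nonempty. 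Moreover the lower bound shows this set is bounded below: for all sufficiently negative $t$ one has $P(-t\log|Df|+q\phi)\ge p(t)+q\phi_{\max}>0$, so no such $t$ lies in the zero set. Hence $T_\phi(q)=\inf\{t:P(-t\log|Df|+q\phi)=0\}$ is a genuine finite real number, which completes the case $q<0$ and therefore the lemma.

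I do not expect a serious obstacle here; the argument is essentially the proof of Lemma~\ref{lem:T finite q pos} with the roles of $\phi_{\min}$ and $\phi_{\max}$ interchanged and with the new hypothesis $\lim_{t\to+\infty}p(t)=-\infty$ playing the role that $P(-\log|Df|)\le 0$ played there. The only points meriting any care are (i) ensuring the infimum defining $T_\phi(q)$ is not $-\infty$, which is exactly what the lower sandwich bound $p(t)+q\phi_{\max}\to+\infty$ as $t\to-\infty$ guarantees, and (ii) recording that $t\mapsto P(-t\log|Df|+q\phi)$ is continuous so that the intermediate value theorem genuinely applies.
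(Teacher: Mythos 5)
Your argument is correct and is essentially the paper's proof: both rely on sandwiching $P(-t\log|Df|+q\phi)$ between the two affine translates $p(t)+q\phi_{\max}$ and $p(t)+q\phi_{\min}$, then using the hypothesis $p(t)\to-\infty$ as $t\to+\infty$ on the upper bound and $p(t)\to+\infty$ as $t\to-\infty$ on the lower bound to trap a zero in a finite interval. Your write-up is slightly more explicit than the paper's (which defers the lower endpoint to ``as in the proof of Lemma~\ref{lem:T finite q pos}''), and you carefully note that the lower sandwich bound also keeps the zero set bounded below so the infimum is genuinely finite; these are presentational differences, not a different route.
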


\begin{proof}
We will show that $T_\phi(q)$ must lie in a finite interval.
First note that if $q <0$ then
\[P(-t \log |Df| + q \phi) \leq P(-t \log |Df|) + q \phi_{\min}.\]
So by assumption if we take $t_1>0$ large enough we have that
\[P(-t_1 \log |Df| + q \phi) \leq 0.\]  From the other side, as in the proof of Lemma~\ref{lem:T finite q pos} we can find $t_0\in \R$ such that
$$P(-t_0 \log |Df| + q \phi) > 0.$$
Hence $T_\phi(q)$ lies in the finite interval $(t_0, t_1]$.

The case of positive $q$ is handled by
Lemma~\ref{lem:T finite q pos}.
\end{proof}

\begin{proof}[Proof of Proposition~\ref{prop:phase trans at 0}]

%We divide the proof in three.
%\textbf{Part 1.}
\emph{Part a):} This follows immediately from Lemma~\ref{lem:T finite q pos}.

\emph{Part b):}
Lemma~\ref{lem:T finite} implies that if $\lim_{t \to +\infty} P(-t \log |Df|)= - \infty$ then we can not have an infinite phase transition.  Therefore, adding this to Lemma~\ref{lem:T finite q pos}, to prove part 1 of the proposition we only need to examine the case when
the limit is finite: $\lim_{t \to +\infty} P(-t \log |Df|) > - \infty$ and $q<0$.

\iffalse
The pressure of $\phi_t:=-\log|Df|$ is defined as a supremum of $h(\mu)-t\lambda(\mu)$ for $\mu\in \M$.  By Theorem~\ref{thm:IT press conv} the graph is also convex and decreasing.  Moreover, if the graph $t\mapsto P(-t\log|Df|)$ is asymptotic to an affine function $t\mapsto at+b$ there must exist a sequence $(\mu_n)_n\subset \M$ such that $h(\mu_n)\to b$ and $\lambda(\mu_n) \to -a$ as $n\to \infty$.  Since $h(\mu)\ge 0$ for all $\mu\in \M$,
\fi

By definition, $Dp(t)\le -\lambda_mt$ so the only way that we can have the following bound $\lim_{t \to +\infty} P(-t \log |Df|) > - \infty$ is if $\lambda_m= 0$ (note that $\lambda(\mu)\ge 0$ for all $\mu\in \M$ by \cite{Prz}).  This implies $P(-t \log |Df|) \ge 0$ for all $t \in \R$.  Now suppose that $q<0$.  Then
$$P(-T(q)\log|Df|+q\phi)\ge P(-T(q)\log|Df|) + q\phi_{\max}\ge q\phi_{\max}>0.$$  Hence $T(q)=\infty$.  Since this holds for all negative $q$, the infinite phase transition must occur at 0.

\emph{Part c):}
Let $q\in \R$ and $\delta>0$.  Then
\begin{align*}
P(-T_\phi(q)\log|Df|+(q+\delta)\phi) &\le P(-T_\phi(q)\log|Df|+q\phi)+\delta\phi_{\max}\\
&< P(-T_\phi(q)\log|Df|+q\phi).
\end{align*}
So there is no way that $T_\phi(q)$ can be $T_\phi(q+\delta)$, proving part 3.
\end{proof}

In the next theorem  we establish the existence of equilibrium measures for the potential
$$\phi_q:= -T(q) \log |Df| +q \phi$$ for a maximal range of values of the parameter $q \in \R$.
The strategy of the proof follows the arguments developed in \cite{IT} to prove the existence and uniqueness of equilibrium measures for the geometric  potential $-t \log |Df|$.

%The fact that potential $\phi$ is bounded allow us to apply the same methods used for the geometrical potential.

We define the constants $q_\phi^-\le q_\phi^+$ as follows:
\begin{list}{$\bullet$}{\itemsep 0.2mm \topsep 0.2mm \itemindent -0mm \leftmargin=5mm}
\item $q_\phi^+$ is defined, if possible, to be the infimum of  $q\ge 1$ such that there exists $\eps_q>0$ such that for all $\eps\in(0, \eps_q)$ there exists $\delta>0$ such that for any $\mu\in \M$,
$$\left|h(\mu)+\int\phi_q~d\mu\right|<\delta \quad \text{ implies }\quad  h(\mu)>\eps.$$
If there is no such value, then $q_\phi^+:=\infty$.
\item
If $T_\phi$ has an infinite phase transition then $q_\phi^-:=0$.  If not then, if possible, it is defined as being the supremum of $q\le 1$ such that there exists $\eps_q>0$ such that for all $\eps\in(0, \eps_q)$ there exists $\delta>0$ such that for any $\mu\in \M$,
$$\left|h(\mu)+\int\phi_q~d\mu\right|<\delta \quad \text{ implies }\quad  h(\mu)>\eps.$$
If there is no such value then $q_\phi^-:=-\infty$.
\end{list}

\begin{lema}
If $f\in \F$ and $\phi\in \P$ then $q_\phi^-\le 0$ and $q_\phi^+\ge 1$.
\label{lem:q_c range}
\end{lema}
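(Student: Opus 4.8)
The plan is to verify, by an elementary estimate using only that a potential in $\P$ is bounded above by a strictly negative constant, that every $q\in(0,1]$ satisfies the entropy-gap condition occurring in the definitions of $q_\phi^-$ and $q_\phi^+$; the two inequalities will then drop out of those definitions.

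First I would record what is needed about $T_\phi$ on $[0,1]$. By Proposition~\ref{prop:phase trans at 0}, $T_\phi(q)\in\R$ for every $q\ge 0$ and $T_\phi$ is non-increasing wherever it is finite; since $P(\phi)=0$ gives $T_\phi(1)=0$, this yields $T_\phi(q)\ge 0$ for all $q\in(0,1]$, and in particular $\phi_q=-T_\phi(q)\log|Df|+q\phi$ is well defined there.

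Next, fix $q\in(0,1]$ and let $\mu\in\M$. Rearranging,
\[
h(\mu)=\Bigl(h(\mu)+\int\phi_q~d\mu\Bigr)+T_\phi(q)\lambda(\mu)-q\int\phi~d\mu .
\]
Here $T_\phi(q)\lambda(\mu)\ge 0$, because $T_\phi(q)\ge 0$ and $\lambda(\mu)\ge 0$ by \cite{Prz}, and $-q\int\phi~d\mu\ge -q\phi_{\max}=q|\phi_{\max}|>0$, because $\phi\le\phi_{\max}<0$. Hence $h(\mu)\ge\bigl(h(\mu)+\int\phi_q~d\mu\bigr)+q|\phi_{\max}|$, so whenever $|h(\mu)+\int\phi_q~d\mu|<\delta$ we get $h(\mu)>q|\phi_{\max}|-\delta$. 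Thus, setting $\eps_q:=q|\phi_{\max}|$ and, for each $\eps\in(0,\eps_q)$, $\delta:=\eps_q-\eps>0$, the required implication ``$|h(\mu)+\int\phi_q~d\mu|<\delta$ implies $h(\mu)>\eps$'' holds for all $\mu\in\M$.

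Finally, since this condition holds for every $q\in(0,1]$, the definition of $q_\phi^-$ (together with the convention $q_\phi^-:=0$ when $T_\phi$ has an infinite phase transition) gives $q_\phi^-\le 0$; and $q_\phi^+\ge 1$ is immediate, as $q_\phi^+$ is by definition either an infimum over a subset of $[1,\infty)$ or else $+\infty$. I do not expect a real obstacle: the only delicate points are the appeal to Proposition~\ref{prop:phase trans at 0} and $T_\phi(1)=0$ to ensure $T_\phi(q)\ge 0$ on $(0,1]$, and matching the displayed estimate precisely to the nested-quantifier form of the definition of $q_\phi^\pm$.
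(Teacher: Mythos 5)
Your proof is correct and follows essentially the same route as the paper: both rest on the two facts that $T_\phi(q)\ge 0$ for $q\in(0,1]$ (using that $T_\phi$ is finite there by Proposition~\ref{prop:phase trans at 0} and non-increasing with $T_\phi(1)=0$), and that $\lambda(\mu)\ge 0$ by Przytycki, combined with $\phi\le\phi_{\max}<0$. The paper runs the estimate first for an actual equilibrium state $\mu_{\phi_q}$, obtaining $T_\phi(q)\lambda(\mu_{\phi_q})=h(\mu_{\phi_q})+q\int\phi\,d\mu_{\phi_q}\ge0$ and hence $h(\mu_{\phi_q})\ge -q\int\phi\,d\mu_{\phi_q}>0$, and then says the entropy-gap statement follows "by extending this argument" to nearby measures; you instead run the inequality directly for an arbitrary $\mu\in\M$ with $|h(\mu)+\int\phi_q\,d\mu|<\delta$ and supply the explicit constants $\eps_q=q|\phi_{\max}|$ and $\delta=\eps_q-\eps$, which is exactly what that one-line "extension" amounts to. So your version is a filled-in rendering of the same argument rather than a different one.
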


\begin{proof}
Suppose $q\in (0, 1)$.  Then $T_\phi(q)\ge 0$.  Suppose that there is an equilibrium state $\mu_{\phi_q}\in \M$ for $\phi_q$.  Then by definition $$T_\phi(q)\lambda(\mu_{\phi_q})= h(\mu_{\phi_q})+q\int\phi~d\mu_{\phi_q}\ge 0$$
since by \cite{Prz}, $\lambda(\mu_{\phi_q})\ge 0$.  Since $q\int\phi~d\mu_{\phi_q}<0$, we must have $h(\mu_{\phi_q})>0$.
The lemma then follows by extending this argument to the case of measures $\mu$ with $h(\mu)+\int\phi_q~d\mu$ close to 0.
\end{proof}

For here on, we assume that $f\in \F_D$ to ensure that H\"older potentials $\phi$ yield induced potentials $\Phi$ for our inducing schemes which are locally H\"older continuous, as in \cite[Lemma 4]{BTeqgen}.

\begin{rem}
By \cite[Theorem 6]{BTeqgen}, if $f\in \F_D$ and $\phi\in \P_H$ and $\phi_{\max}-\phi_{\min}<\htop(f)$ then $q_\phi^+>1$.
\end{rem}

\begin{teo} \label{thm:unique mu_q}
Suppose that $f\in \F_D$ and $\phi\in \P_H$.  Then for every $q \in (q_\phi^-, q_\phi^+)$ the potential $\phi_q$ has a unique equilibrium measure $\mu_{\phi_q}$. Moreover, it is a measure of positive entropy.
%If $q < q_\phi^-$ or $q> q_\phi^+$ then the function $\phi_q$ has a unique %equilibrium measure $\mu_{\phi_q}$ and it has zero entropy.
\end{teo}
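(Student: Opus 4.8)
The strategy is to transfer the problem to the inducing schemes of Theorem~\ref{thm:schemes}, where the induced potential $\Phi_q := -T_\phi(q)\log|DF| + q\Phi$ (with $\Phi$ the induced potential of $\phi$) is locally H\"older continuous by \cite[Lemma 4]{BTeqgen} and so falls under the theory of countable Markov shifts from Section~\ref{countable}. The outline follows the proof of the analogous statement for $-t\log|Df|$ in \cite{IT}: (i) show that for $q\in(q_\phi^-,q_\phi^+)$ there is an inducing scheme $(X^n,F_n)$ on which $P(\Phi_q)=0$ is realized by a recurrent (indeed positive recurrent) Gibbs--equilibrium state $\mu_{\Phi_q,n}$ with $\int\tau~d\mu_{\Phi_q,n}<\infty$; (ii) project $\mu_{\Phi_q,n}$ to an $f$-invariant measure $\mu_{\phi_q}$ using \eqref{eq:lift}, and use Abramov's formula to check it is an equilibrium state for $\phi_q$ on $(I,f)$; (iii) show positivity of entropy; (iv) establish uniqueness by showing every equilibrium state lifts to one of the schemes.

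First, for (i): since $P(\phi)=0$ and $T_\phi(q)$ is finite for $q$ in the relevant range (Proposition~\ref{prop:phase trans at 0}), by definition $P(-T_\phi(q)\log|Df|+q\phi)=0$, so by the argument of \cite[Proposition 2.2]{IT} (or the discussion around Theorem~\ref{thm:schemes}) the induced pressure satisfies $P(\Phi_q)\le 0$, with supremum $0$ approximated by measures compatible with the schemes. The definition of $q_\phi^\pm$ is precisely the condition needed to run the argument of \cite{IT}: it guarantees that any sequence of measures $\mu_k$ with $h(\mu_k)+\int\phi_{q}~d\mu_k\to 0$ has entropy bounded away from $0$, which rules out the "escape of mass to the boundary" phenomenon and forces the relevant Gurevich pressure on a finite sub-scheme, then on the full scheme, to be realized by a genuine equilibrium/Gibbs state with integrable inducing time $\tau$. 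Concretely: choose a finite sub-scheme $(\tilde X^N,\tilde F_N)$ with $P(-T_\phi(q)\log|D\tilde F_N|)$ close to $0$ from below; its equilibrium state $\mu_{\tilde F_N}$ has controlled $\int\tau~d\mu_{\tilde F_N}$; pass to the limit $N\to\infty$ using the entropy lower bound to prevent $\int\tau~d\mu_{F_n}$ from blowing up, obtaining $\mu_{\Phi_q,n}$ with $\int\tau~d\mu_{\Phi_q,n}<\infty$ and $P(\Phi_q)$ attained.

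For (ii)--(iii): Abramov's formula and \eqref{eq:lift} give $h(\mu_{\phi_q}) + \int\phi_q~d\mu_{\phi_q} = \bigl(h(\mu_{\Phi_q,n})+\int\Phi_q~d\mu_{\Phi_q,n}\bigr)/\int\tau~d\mu_{\Phi_q,n} = P(\Phi_q)/\int\tau~d\mu_{\Phi_q,n}$; combining with $P(\phi_q)=0$ on $(I,f)$ (which follows from $T_\phi(q)$ being exactly the root of the pressure equation, plus the fact that equilibrium states of $\phi_q$ on $(I,f)$ with positive Lyapunov exponent lift) shows $\mu_{\phi_q}$ is an equilibrium state. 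Positivity of entropy comes from Lemma~\ref{lem:q_c range}'s argument: if $q\notin\{0\}$ then $q\int\phi~d\mu_{\phi_q}<0$ while $T_\phi(q)\lambda(\mu_{\phi_q})=h(\mu_{\phi_q})+q\int\phi~d\mu_{\phi_q}$ and $\lambda(\mu_{\phi_q})\ge 0$ by \cite{Prz}, forcing $h(\mu_{\phi_q})>0$; for $q=0$ one uses that $T_\phi(0)>0$ and the measure has positive Lyapunov exponent so again $h>0$ (or cite that $T_\phi(0)$ equals the dimension of an acip-type object). For (iv) uniqueness: any equilibrium state $\mu$ for $\phi_q$ has positive Lyapunov exponent (by the entropy positivity just shown plus Ruelle's inequality / the fact that zero-exponent measures cannot be equilibrium states here, as in \cite{IT,rivshen}), hence lifts to every scheme by Theorem~\ref{thm:schemes}(b)-style compatibility; on the full-shift, the locally H\"older potential $\Phi_q$ with finite Gurevich pressure has a unique equilibrium state (Sarig/Buzzi--Sarig, Mauldin--Urba\'nski), so the lift of $\mu$ must coincide with $\mu_{\Phi_q,n}$, hence $\mu=\mu_{\phi_q}$.

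\textbf{Main obstacle.} The delicate point is step (i): proving that the pressure $P(\Phi_q)=0$ is actually \emph{attained} by a measure with \emph{finite} inducing time, rather than merely approximated. This is exactly where the definitions of $q_\phi^\pm$ enter, and where the work of \cite{IT} must be adapted: one must show that the truncation-and-limit procedure does not lose the mass, i.e. that the entropy lower bound encoded in the definition of $q_\phi^\pm$ translates into a uniform bound on $\int\tau~d\mu$ along the approximating sequence, via an inequality relating $\int\tau~d\mu_{F}$ to $h(\mu)$ and the tail of $\tau$. The H\"older hypothesis $\phi\in\P_H$ and $f\in\F_D$ are needed precisely so that $\Phi_q$ has summable variation on the schemes (\cite[Lemma 3, Lemma 4]{BTeqgen}), without which neither the Gibbs property nor the uniqueness on the countable shift is available.
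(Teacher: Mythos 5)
Your proposal is essentially the same as the paper's: the paper also sketches the proof by referring to \cite[Section 5]{IT}, and the steps match — transfer to inducing schemes via Theorem~\ref{thm:schemes}, use the entropy lower bound encoded in the definition of $q_\phi^\pm$ to produce a sequence of pressure-approximating measures compatible with a finite set of schemes (hence $P(\Phi_q)=0$ for one of them), invoke the Sarig/Mauldin--Urba\'nski theory to get a Gibbs measure for the locally H\"older potential $\Phi_q$, prove integrability of the inducing time, project via \eqref{eq:lift}, and conclude uniqueness by lifting. The remark that $f\in\F_D$ and $\phi\in\P_H$ are there precisely to secure summable variations of $\Phi_q$ (via \cite[Lemma 4]{BTeqgen} and \cite[Lemma 8]{BTeqnat}) is also what the paper uses.

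One small slip: your positivity-of-entropy argument, ``if $q\notin\{0\}$ then $q\int\phi~d\mu_{\phi_q}<0$,'' only works for $q>0$ (since $\phi<0$, for $q<0$ one has $q\int\phi~d\mu>0$, and the inequality you wrote flips). This is also why the paper's Lemma~\ref{lem:q_c range} only treats $q\in(0,1)$. For the full range $q\in(q_\phi^-,q_\phi^+)$ (including $q\le 0$), the positivity of entropy is instead built directly into the definition of $q_\phi^\pm$: the defining condition is precisely that measures $\mu$ with $|h(\mu)+\int\phi_q\,d\mu|$ small satisfy $h(\mu)>\eps$, so any equilibrium state automatically has strictly positive entropy. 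Your proof elsewhere relies on this same fact (for the lifting step), so the conclusion is unaffected; only the cited justification for $q\le 0$ needs to be corrected.
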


Since the proof of this theorem goes along the same lines as the proof of Theorem~\ref{thm:ITeq_exist_unique} given in \cite{IT}, we only sketch it here. Note that Theorem~\ref{thm:eq cts pots} is a corollary of this.

\begin{proof}
Proposition~\ref{prop:phase trans at 0} implies that there exists $\tilde q_\phi^-\in [-\infty, 0]$ such that for every $q\in (\tilde q_\phi^-, \infty)$ there exists a unique root $T(q) \in \R$ of the equation $P(-t \log |Df| +q \phi)=0$.

Lemma~\ref{lem:q_c range} implies that for $q\in (q_\phi^-, q_\phi^+)$ and any measure $\mu\in \M$ with $h(\mu)+\int\phi_q~d\mu$ close to 0 must have strictly positive entropy.

The rest of the proof follows as in \cite[Section 5]{IT}.  The steps are as follows:

\emph{Approximation of the pressure with compatible measures.}  The first step in the proof is to construct an inducing scheme, such that there exists a sequence of measures that approximate the pressure and are all compatible with it. More precisely:

\begin{prop}
Suppose that $f\in \F_D$ and $\phi\in \P_H$.  Let $q \in (q_\phi^-, q_\phi^+)$, then there exists an inducing scheme $(X,F)$ and a sequence of measures $(\mu_n)_n \subset \M$ all compatible with $(X,F)$ such that
\[ h(\mu_n) -T(q) \int \log |Df| \ d \mu_n +q \int \phi \ d \mu_n \to 0   \textrm{ and } \inf_n h(\mu_n) >0.                  \]
Moreover, if $\Phi_q$ denotes the induced potential of $\phi_q$ then $P(\Phi_q)=0$.
\end{prop}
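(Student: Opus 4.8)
The plan is to adapt the scheme-based argument of \cite[Section 5]{IT}, replacing the geometric potential $-t\log|Df|$ by $\phi_q=-T(q)\log|Df|+q\phi$. Recall the preliminary reductions already made in the proof of Theorem~\ref{thm:unique mu_q}: for $q\in(q_\phi^-,q_\phi^+)$, Proposition~\ref{prop:phase trans at 0} guarantees that $T(q)\in\R$ is the unique root of $t\mapsto P(-t\log|Df|+q\phi)$, so that $P(\phi_q)=0$; and the definition of $q_\phi^-$ and $q_\phi^+$ provides, for such $q$, constants $\eps>0$ and $\delta>0$ such that every $\mu\in\M$ with $\big|h(\mu)+\int\phi_q\,d\mu\big|<\delta$ has $h(\mu)>\eps$. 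Since $f\in\F_D$ and $\phi$ is H\"older, for any inducing scheme $(X,F,\tau)$ from the family of Theorem~\ref{thm:schemes} the induced potential $\Phi=\Phi^F$ of $\phi$ is locally H\"older by \cite[Lemma 4]{BTeqgen}, hence the induced potential $\Phi_q=-T(q)\log|DF|+q\Phi$ of $\phi_q$ has summable variations, and the countable Markov shift formalism of Section~\ref{countable} applies to it.

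Next I would fix the scheme and establish $P(\Phi_q)=0$. One inequality is routine: for any truncation of $(X,F)$ to finitely many branches, every $F$-invariant measure $\mu_F$ satisfies $\int\tau\,d\mu_F<\infty$ and projects to some $\mu\in\M$ with, by the Abramov and Kac formulae, $h(\mu_F)+\int\Phi_q\,d\mu_F=\big(h(\mu)+\int\phi_q\,d\mu\big)\int\tau\,d\mu_F\le P(\phi_q)\int\tau\,d\mu_F=0$; letting the number of branches tend to infinity and using that the Gurevich pressure is the supremum of the pressures of these finite subsystems gives $P(\Phi_q)\le0$. The reverse inequality $P(\Phi_q)\ge0$ — equivalently, the assertion that the chosen scheme sees the whole pressure of $\phi_q$ — is the delicate point; it is the analogue, for $\phi_q$ in place of $-t\log|Df|$, of the statement $P(-t\log|Df|-\tau p(t))=0$ used in the proof of Lemma~\ref{lem:small LE big dim}, and it follows from the construction of the family $\{(X^n,F_n)\}$ in Theorem~\ref{thm:schemes} together with the pressure-approximation arguments of \cite[Section 5]{IT}: one chooses a scheme from the family to which a sequence of ergodic measures nearly realising $P(\phi_q)$ can be lifted. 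Note that such approximating measures have positive Lyapunov exponent, since once $\big|h(\mu)+\int\phi_q\,d\mu\big|<\delta$ we get $h(\mu)>\eps>0$, whence $\lambda(\mu)\ge h(\mu)>0$ by Ruelle's inequality (and $\lambda(\mu)\ge0$ by \cite{Prz}), so each of them is indeed compatible with some scheme in the family.

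Granted $P(\Phi_q)=0$, the required sequence is produced inside $(X,F)$. Truncating $(X,F)$ to the scheme with its first $N$ branches gives a subshift of finite type on finitely many symbols on which $\Phi_q$ is a bounded H\"older potential; let $\mu_{N,F}$ be its equilibrium state, so $h(\mu_{N,F})+\int\Phi_q\,d\mu_{N,F}=P_N(\Phi_q)$ and $\int\tau\,d\mu_{N,F}<\infty$, with $P_N(\Phi_q)\to P(\Phi_q)=0$ as $N\to\infty$. Projecting $\mu_{N,F}$ to a measure $\mu_N\in\M$ compatible with $(X,F)$ and applying Abramov and Kac once more, $h(\mu_N)+\int\phi_q\,d\mu_N=P_N(\Phi_q)/\int\tau\,d\mu_{N,F}\to0$ since $\int\tau\,d\mu_{N,F}\ge1$. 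Discarding those $N$ for which $\big|h(\mu_N)+\int\phi_q\,d\mu_N\big|\ge\delta$ and relabelling, the entropy gap above yields $h(\mu_N)>\eps$ for all remaining $N$, so $\inf_N h(\mu_N)\ge\eps>0$; relabelling as $(\mu_n)_n$ completes the argument.

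The main obstacle is the middle step: securing a single inducing scheme $(X,F)$ in the family of Theorem~\ref{thm:schemes} whose induced pressure $P(\Phi_q)$ equals $P(\phi_q)=0$, rather than merely being $\le0$. This is where the careful construction of the schemes in \cite{T} is used, and where one must argue, as in \cite[Section 5]{IT}, that the measures approximating $P(\phi_q)$ — or at least a subsequence of them with the right properties — can all be lifted to one scheme. The hypotheses $f\in\F_D$ and $\phi$ H\"older enter precisely here, ensuring that the induced potentials $\Phi_q$ are locally H\"older with summable variation, which is what lets the countable Markov shift estimates and the variational principle used throughout go through unchanged.
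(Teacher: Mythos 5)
Your proposal follows essentially the same route as the paper: the entropy gap coming from the definition of $q_\phi^\pm$, compatibility of high-entropy measures with one of a finite collection of inducing schemes (hence a pigeonhole choice of a single $(X,F)$), and the Abramov--Kac identities to relate induced and original free energies, giving $P(\Phi_q)\ge 0$ by lifting and $P(\Phi_q)\le 0$ by approximating the Gurevich pressure with finite subsystems. The only real divergence is your separate construction of $(\mu_n)_n$ from equilibrium states on finite-alphabet truncations of $(X,F)$; this step is superfluous, since the subsequence of approximating measures selected by the pigeonhole argument already lies in $\M$, is compatible with the chosen scheme, has free energy for $\phi_q$ tending to $0$, and (by the entropy gap) has entropy bounded below, so it can be taken directly as $(\mu_n)_n$.
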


The proof of this results follows from two observations: the first is that by definition there exist $\eps, \delta >0$ such that any measure $\mu$ with
$$\left|h(\mu)+\int\phi_q~d\mu\right|<\delta$$
is such that $h(\mu) >\eps$.
The other result used in the proof is that, given $\eps >0$ there exists a finite number of inducing schemes, such that any ergodic measure with $h(\mu) >\eps$ is compatible with one of these schemes and has integrable return time (this was first proved in \cite[Remark 6]{BTeqnat}, see also \cite[Lemma 4.1]{IT}).
Combining the previous two observations we obtain that $P(\Phi_q) \geq 0$. The fact that
$P(\Phi_q) \leq 0$ follows from an approximation argument (see \cite[Lemma 3.1]{IT}).  We note here that the potential $\Phi_q$ has summable variations by combining \cite[Lemma 4]{BTeqgen} and \cite[Lemma 8]{BTeqnat}.

Recall that  the inducing system $(X,F)$ can be coded by a full-shift on a countable alphabet, hence we have a Gibbs measure $\mu_{\Phi_q}$ corresponding to $\Phi_q$.

\emph{The Gibbs measure has integrable inducing time.}
The next step is to show that the inducing time is integrable with respect to the Gibbs measure $\mu_{\Phi_q}$.  This follows as in \cite[Proposition 5.2]{IT}.

\emph{Uniqueness of the equilibrium measure.}
This follows as in \cite[Proposition 6.1]{IT}.
\end{proof}

A detailed study of the temperature function will allow us to describe the multifractal spectrum. In order to study the regularity properties of the function $T_{\phi}(q)$ we need to understand the thermodynamic formalism for the potential $\phi_q$.
%=-T(q) \log |Df| +q \phi$.

\iffalse

\begin{teo}  Suppose that $f\in \F$ and $\phi\in \P_H$.
If $q \in (q_\phi^-, q_\phi^+)$ then

\newcounter{Kcount}
\begin{list}{\alph{Kcount})}
{\usecounter{Kcount} \itemsep 1.0mm \topsep 0.0mm \leftmargin=7mm}
\item the temperature function, $q \mapsto T_{\phi}(q)$ is differentiable;
\item
$DT_\phi(q) = \frac{\int \phi \ d \mu_{\phi_q}}{\int \log |Df| \ d \mu_{\phi_q}};$
\item $T_\phi(q)= \dim_H(\mu_{\phi_q})+q DT_\phi(q)$; \textbf{MT:minus to plus here}
\item $T_{\phi}$ is convex;
\item if $f\in \F_{ac}$ and $\mu_{ac}\neq \mu_\phi$ then $T_{\phi}$ is strictly convex;
\item $T_\phi$ is linear in $(-\infty, q_\phi^-)$ and $(q_\phi^+, \infty)$;
\item  $T_\phi$ is $C^1$ at $q_\phi^+$.
\end{list}
\label{thm:Tphi properties}
\end{teo}
\fi

\begin{teo}  Suppose that $f\in \F_D$ and $\phi\in \P_H$.
If $q \in (q_\phi^-, q_\phi^+)$ then
\begin{enumerate}[a)]
\item the temperature function, $q \mapsto T_{\phi}(q)$ is differentiable;
\item
$DT_\phi(q) = \frac{\int \phi \ d \mu_{\phi_q}}{\int \log |Df| \ d \mu_{\phi_q}};$
\item $T_\phi(q)= \dim_H(\mu_{\phi_q})+q DT_\phi(q)$;
\item $T_{\phi}$ is convex;
\item if $f\in \F_{ac}$ and $\mu_{ac}\neq \mu_\phi$ then $T_{\phi}$ is strictly convex;
\item $T_\phi$ is linear in $(-\infty, q_\phi^-)$ and $(q_\phi^+, \infty)$;
\item  $T_\phi$ is $C^1$ at $q_\phi^+$.
\end{enumerate}
\label{thm:Tphi properties}
\end{teo}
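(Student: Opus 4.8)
The plan is to read off all seven statements from the defining relation $P(-T_\phi(q)\log|Df|+q\phi)=0$ together with the thermodynamic results already available, principally Theorem~\ref{thm:unique mu_q} (existence and uniqueness of the positive-entropy equilibrium state $\mu_{\phi_q}$ for $\phi_q=-T_\phi(q)\log|Df|+q\phi$ on $(q_\phi^-,q_\phi^+)$) and Theorem~\ref{thm:IT press conv} / the real-analyticity of $t\mapsto P_G(t\Phi)$ for the induced potentials on countable Markov shifts. First, for differentiability (part a)), I would fix $q_0\in(q_\phi^-,q_\phi^+)$ and apply the implicit function theorem to the two-variable pressure function $(t,q)\mapsto P(-t\log|Df|+q\phi)$: this is $C^1$ in a neighbourhood of $(T_\phi(q_0),q_0)$ because the corresponding induced potential has summable variations (by \cite[Lemma 4]{BTeqgen} and \cite[Lemma 8]{BTeqnat}) and a Gibbs/equilibrium state of positive entropy, so that $\partial_t P=-\lambda(\mu_{\phi_{q_0}})<0$ (positivity of the Lyapunov exponent of $\mu_{\phi_{q_0}}$, guaranteed by Theorem~\ref{thm:unique mu_q}). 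The implicit function theorem then gives $T_\phi$ differentiable at $q_0$ with
\[
DT_\phi(q)=-\frac{\partial_q P(-T_\phi(q)\log|Df|+q\phi)}{\partial_t P(-T_\phi(q)\log|Df|+q\phi)}=\frac{\int\phi\,d\mu_{\phi_q}}{\int\log|Df|\,d\mu_{\phi_q}},
\]
using the standard derivative formulas for pressure ($\partial_q P=\int\phi\,d\mu_{\phi_q}$, $\partial_t P=-\int\log|Df|\,d\mu_{\phi_q}$), which proves b).

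For c), I would use the equilibrium relation $0=P(\phi_q)=h(\mu_{\phi_q})-T_\phi(q)\lambda(\mu_{\phi_q})+q\int\phi\,d\mu_{\phi_q}$. Dividing by $\lambda(\mu_{\phi_q})>0$ and invoking the Hofbauer–Raith / Ledrappier-type formula $\dim_H(\mu_{\phi_q})=h(\mu_{\phi_q})/\lambda(\mu_{\phi_q})$ (as already cited via \cite{Hofdim} in the Lyapunov section), one gets
\[
T_\phi(q)=\frac{h(\mu_{\phi_q})}{\lambda(\mu_{\phi_q})}+q\,\frac{\int\phi\,d\mu_{\phi_q}}{\lambda(\mu_{\phi_q})}=\dim_H(\mu_{\phi_q})+qDT_\phi(q),
\]
using b). Convexity (part d)) follows because $T_\phi$ is, up to sign and the change of variables coming from dividing by $\lambda$, a Legendre-type transform: concretely, $T_\phi(q)=\sup\{t:P(-t\log|Df|+q\phi)\ge 0\}$ exhibits $-T_\phi$ as an infimum over $q$ of affine-in-$q$ functions $t\mapsto$(something), or more directly one differentiates again: $D^2T_\phi(q)$ has the sign of the variance-type quantity arising from $\frac{d}{dq}\big(\int\phi\,d\mu_{\phi_q}/\lambda(\mu_{\phi_q})\big)$, which is nonnegative by the convexity of pressure in the linear parameter. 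I would phrase d) cleanly via the fact that $q\mapsto P(-t\log|Df|+q\phi)$ is convex for each fixed $t$ and then use the implicit description of $T_\phi$ as a level set of a jointly convex-in-one-variable function.

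For e), strict convexity when $f\in\F_{ac}$ and $\mu_{ac}\ne\mu_\phi$: here the point is that $D^2T_\phi(q)=0$ on an interval would force $\phi$ to be cohomologous to a multiple of $\log|Df|$ plus a constant (equality in the variance estimate / linearity of pressure forces the two potentials $-t\log|Df|$ and $\phi$ to be cohomologous after affine rescaling), and by a Livšic-type argument this would make $\mu_{\phi_q}$ equal to an equilibrium state of $-t\log|Df|$; when the relevant exponent is that of the acip this forces $\mu_\phi=\mu_{ac}$, contradiction. I would run this on the inducing schemes, where cohomology of the (locally Hölder) induced potentials is controlled by periodic-orbit data, and use real-analyticity of the induced pressure to upgrade "not identically zero second derivative" to "strictly positive everywhere on $(q_\phi^-,q_\phi^+)$". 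Parts f) and g): linearity of $T_\phi$ outside $[q_\phi^-,q_\phi^+]$ comes from the definition of $q_\phi^\pm$ — beyond these thresholds the supremum defining equilibrium states is attained (in the limit) by zero-entropy measures of a fixed Lyapunov exponent $\lambda_0$, so $P(-t\log|Df|+q\phi)$ becomes affine in $(t,q)$ along the relevant ray and hence $T_\phi(q)=(q\int\phi\,d\nu)/\lambda_0$ for the extremal $\nu$, an affine function of $q$; and $C^1$-matching at $q_\phi^+$ (part g)) follows from continuity of $q\mapsto DT_\phi(q)=\int\phi\,d\mu_{\phi_q}/\lambda(\mu_{\phi_q})$ as $q\uparrow q_\phi^+$, which is where the hypothesis $\phi\in\P_H$ (Hölder) and \cite[Theorem 6]{BTeqgen} enter to control the boundary behaviour of the equilibrium states. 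The main obstacle I expect is part e) (and the $C^1$-matching in g)): ruling out an interval of linearity requires a careful Livšic/cohomology argument on the noncompact inducing schemes and a genuine use of $f\in\F_{ac}$ to identify the offending measure with $\mu_{ac}$ — everything else is a fairly mechanical consequence of the implicit function theorem applied to the pressure and of the dimension formula $h/\lambda$.
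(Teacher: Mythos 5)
Your sketches for a), b), c) match the paper's proof almost exactly (implicit function theorem via differentiability of $(t,q)\mapsto P(-t\log|Df|+q\phi)$, the standard derivative formulas, and the formula $\dim_H\mu=h/\lambda$ from \cite{Hofdim}). The difficulties are in d), e) and g).

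For d) and e) your approach leans on second-derivative information: you propose to show $D^2T_\phi\ge 0$ via a variance inequality, and to formulate the failure of strict convexity as ``$D^2T_\phi=0$ on an interval.'' The paper is explicit that this route is blocked in this setting: after establishing b) it remarks that the analogous results in the uniformly hyperbolic literature are obtained from higher derivatives of $T_\phi$, ``however, we do not have information on these.'' The paper's d) is instead an elementary tangent-line argument: from $P(\phi_{q+\kappa})\ge h(\mu_{\phi_q})-T_\phi(q+\kappa)\lambda(\mu_{\phi_q})+(q+\kappa)\int\phi\,d\mu_{\phi_q}$ and the definition of $T_\phi$ one reads off $T_\phi(q+\kappa)\ge T_\phi(q)+\kappa DT_\phi(q)$, i.e.\ the graph lies above each tangent, which gives convexity without ever differentiating twice. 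For e), the paper does not invoke a variance rigidity statement; it argues that a convex function that is not strictly convex must be affine on some subinterval $[q_1,q_2]$, that the equilibrium state is then the same for all $q\in[q_1,q_2]$, and then uses the Gibbs property of the induced measure to derive $|X_i|^{\gamma}\asymp e^{\Phi_i}$ on cylinders for the slope $-\gamma$. This produces a Gibbs measure for $\Phi/\gamma$ equal to $\mu_{-\log|DF|}$, whose projection is the acip; finally $P(\phi)=P(\phi/\gamma)=0$ and $\phi<0$ force $\gamma=1$, hence $\mu_\phi=\mu_{ac}$, the desired contradiction. Your ``Liv\v sic-type cohomology on the inducing scheme'' captures the spirit but skips both the cylinder computation and the final step forcing $\gamma=1$, which is where the hypothesis $\phi\in\P$ (so $\phi<0$) actually enters.

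For g) your proposal --- that continuity of $q\mapsto DT_\phi(q)$ up to $q_\phi^+$ follows from boundary control of the equilibrium states via H\"olderness and \cite[Theorem 6]{BTeqgen} --- is not a proof: to get $C^1$ at the boundary you must match the one-sided derivative of the affine piece on $(q_\phi^+,\infty)$ with the limit of $DT_\phi(q)$ as $q\uparrow q_\phi^+$, and nothing in your sketch rules out a jump. The paper's argument takes a weak$^*$ limit of the measures $\mu_{\phi_q}$ as $q$ approaches the endpoint, shows (as in the proof of \cite[Theorem B]{IT}) this limit is an equilibrium state with positive Lyapunov exponent for the boundary potential, and then invokes uniqueness of such positive-exponent equilibrium states (\cite[Proposition 6.1]{IT}) to conclude there can be no corner. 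That uniqueness statement is the key ingredient and is absent from your outline.

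Your f) is in the right spirit: the paper likewise argues from the definition of $q_\phi^\pm$ that entropy degenerates there, and any departure from linearity beyond the threshold would produce a measure with a larger ratio $\int\phi/\lambda$, contradicting the defining value of $T_\phi$ at the endpoint.

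In short: a)--c) and the broad plan for e)--f) are fine, but you must replace the second-derivative/variance reasoning in d)--e) by the tangent-line and Gibbs-cylinder arguments the paper actually uses (the paper explicitly disclaims higher-derivative control), and g) needs the limit-of-equilibrium-states plus uniqueness mechanism rather than a bare appeal to boundary regularity.
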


\begin{proof}
 %We divide the proof in several parts.

\emph{Part a).}
It is a consequence of Theorem \ref{thm:unique mu_q} and \cite[Proposition 8.1]{IT}  that given $q \in (q_\phi^-, q_\phi ^+)$ there exists $\epsilon>0$ such that if $t \in (T_{\phi}(q) - \epsilon, T_{\phi}(q) + \epsilon)$ the pressure function
\[(t,q) \mapsto P(t,q)=P(-t \log |Df| +q \phi)\]
is differentiable in each variable. Therefore, by the implicit function theorem we obtain that $T_{\phi}(q)$ is differentiable.

\emph{Part b).}
This has been proved in several settings (see \cite[Proposition 21.2]{Pesbook}).
Consider the pressure function on two variables $$(t,q) \to P(t,q)=P(-t \log |Df| +q \phi).$$
There exists $\epsilon >0$ such that $P(t,q)$ is differentiable  on each variable in the range $t \in (T_\phi(q)-\epsilon, T_\phi(q)+ \epsilon)$ and $q \in \mathbb{R}$ satisfying the hypothesis  of the theorem.
\iffalse
Recall that
\[P(T_\phi(q),q)=P(-T_\phi(q) \log |Df| +q \phi) = h(\mu_{\phi_q}) -T_\phi(q) \int \log |Df| \ d\mu_{\phi_q} + \int \phi \ d \mu_{\phi_q} = 0.\]
Note that \textbf{MT:I put a minus sign here: I don't know this stuff so well, so please check I'm right.  There must have been some error previously since $DT_\phi$ has to be negative}
\[\frac{d}{dq} P(t,q) = \frac{\partial P(q,t)}{\partial q}\Big|_{q=q} -\frac{\partial P(q,t)}{\partial t} \Big|_{t=T_\phi(q)} DT_\phi(q)=0. \]
\fi
As in for example \cite[Chapter 8]{PrzUrb_book}, \cite[Section II]{PesWei_mult} or \cite[Chapter 7 p.211]{Pesbook},
\[ DT_\phi(q) = \frac{\partial P(q,t)}{\partial t} \Big|_{t=T_\phi(q)} \left(  \frac{\partial P(q,t)}{\partial q}\Big|_{t=T_\phi(q)}  \right)^{-1}.\]
Furthermore, formulas for the derivative of the pressure (recall that it is differentiable in this range) give
\[DT_\phi(q) = \frac{\int \phi \ d \mu_{\phi_q}}{\int \log |Df| \ d \mu_{\phi_q}}\]
as required.  Note that in the above references the analogues of the remaining parts of the proof of this theorem would be proved using higher derivatives of $T_\phi$.  However, we do not have information on these, so we have to use other methods in the rest of this proof to get convexity etc.

\emph{Part c).}   Using b) and \cite{Hofdim},
\begin{align*}
T_\phi(q)&= \frac{h(\mu_{\phi_q})}{\int \log |Df| \ d\mu_{\phi_q}} +q \frac{\int \phi \ d\mu_{\phi_q}}{\log |Df| d \mu_{\phi_q}}=
\dim_H(\mu_{\phi_q})  +q \frac{\int \phi \ d\mu_{\phi_q}}{\log |Df| d \mu_{\phi_q}}\\
& = \dim_H(\mu_{\phi_q})  +q DT_\phi(q).
\end{align*}

\emph{Part d).}
Given $q\in (q_\phi^-, q_\phi^+)$ there is an equilibrium state $\mu_{\phi_q}$ for $\phi_q$.  We can write
$$T_\phi(q)=\frac{h(\mu_{\phi_q})+q\int\phi~d\mu_{\phi_q}}{\lambda(\mu_{\phi_q})}.$$
By the definitions of $T_\phi$ and pressure, for $\kappa\in \R$,
$$T_\phi(q+\kappa)\ge \frac{h(\mu_{\phi_q})+(q+\kappa)\int\phi~d\mu_{\phi_q}}{\lambda(\mu_{\phi_q})} = T_\phi(q)+\frac{\kappa\int\phi~d\mu_{\phi_q}}{\lambda(\mu_{\phi_q})} =T_\phi(q)+\kappa DT_\phi(q).$$
Whence $T_\phi$ is convex in $(q_\phi^-, q_\phi^+)$.

\emph{Part e).}
To show strict convexity, we use an improved version of the argument in \cite[Lemma 6]{T}.  There it is shown that if the graph of $T_\phi$ is not strictly convex then it must be affine. Similarly in this case suppose that $DT_\phi$ has slope $-\gamma$ in the interval $[q_1, q_2]\subset [q_\phi^-, q_\phi^+]$.  It can be derived from the above computations that the equilibrium state for $\phi_q$ is the same for all $q\in [q_1, q_2]$ (see also, for example, the proof of \cite[Lemma 6]{T}).

We will show that if $T_\phi$ is not strictly convex then $\mu_{\phi}$ is equivalent to the acip.  Let $(X,F)$ be an inducing scheme as in Theorem~\ref{thm:schemes} to which $\mu_{\phi_q}$ is compatible.
By the Gibbs property of $\mu_{\Phi_q}$ for $q, q+\delta\in [q_1, q_2]$, and for `$\asymp_{dis}$' meaning `equal up to a distortion constant' we must have
$$|X_i|^{T_\phi(q)}e^{q\Phi_{i}}\asymp_{dis} |X_i|^{T_\phi(q+\delta)}e^{(q+\delta)\Phi_{i}} = |X_i|^{T_\phi(q)-\delta\gamma}e^{(q+\delta)\Phi_{i}}$$
where $\Phi_{i}:=\sup_{x\in X_i}\Phi(x)$.   This implies $|X_i|^{\gamma} \asymp_{dis} e^{\Phi_{i}}$.  We can extend this argument from 1-cylinders to any $k$-cylinder.  This implies that we have a Gibbs measure $\mu_{\Phi/\gamma}$ for the potential $\Phi/\gamma$, and indeed that $\mu_{-\log|DF|} \equiv \mu_{\Phi/\gamma}$.  This also shows that $P(\Phi/\gamma)=0$.  Since $f$ has an acip, $\int\tau~d\mu_{\Phi/\gamma}<\infty$ and $\mu_{\Phi/\gamma}$ projects to a measure $\mu_{\phi/\gamma}$.  By Theorem~\ref{thm:schemes}(b), $\mu_{\phi/\gamma}$ must be an equilibrium state  for $\phi/\gamma$ as well as for $-\log|Df|$, i.e. $\mu_{\phi/\gamma}=\mu_{ac}$.  Moreover, $P(\phi/\gamma)=0$.  Since $\phi<0$,
$$\gamma>1 \text{ implies } P(\phi)<P(\phi/\gamma) \quad \text{ and } \quad \gamma<1 \text{ implies } P(\phi)>P(\phi/\gamma).$$
Since $P(\phi)=P(\phi/\gamma)=0$, we must have $\gamma=1$, so $\mu_{\phi}=\mu_{ac}$ contradicting our assumption.

\emph{Part f).}
We may assume that $q_\phi^-<0$.
Since the entropy of measures around $q_\phi^-$ is vanishingly small, we must have $$T_\phi(q)=\lim_{q\searrow q_\phi^-}\frac{q_\phi^-\int\phi~d\mu_{\phi_q}}{\lambda(\mu_{\phi_q})}.$$ If $T_\phi$ was not linear in $(-\infty, q_\phi^-)$, we must have measures $\mu$ with  $\frac{\int\phi~d\mu}{\lambda(\mu)}> \frac{\int\phi~d\mu_{\phi_q}}{\lambda(\mu_{\phi_q})}$.  This contradicts the definition of the value of $T_\phi(q_\phi^-)$.  A similar argument follows for $q_\phi^+$.

\emph{Part g).}
For  $q\ge q_\phi^+$, we have $T_\phi(q)<0$ and so $\phi_{q}$ is upper semicontinuous and there is an equilibrium state for $\phi_{q}$.
Using part f) we can show any equilibrium state $\mu_{\phi^+}$ for $\phi_q$ for $q\ge q_\phi^+$ is an equilibrium state for $\phi_q$ for any other $q\ge q_\phi^+$.  Since $-T_\phi(q)\lambda(\mu_{\phi^-})+q\phi=0$, and $\phi<0$, we have $\lambda(\mu_{\phi^-})>0$.   If $T_\phi$ was not $C^1$ at $q_\phi^-$ then we could take a limit $\mu$ of the measures $\mu_{\phi_q}$ where $q\to q_{\phi}^-$.
As in the proof of \cite[Theorem B]{IT}, $\mu$ must be an equilibrium state for $\phi_{q_\phi^-}$ with $\lambda(\mu)>0$, and not equal to $\mu_{\phi^-}$.  As in \cite[Proposition 6.1]{IT}, there can be at most one equilibrium state for $\phi_{q_\phi^-}$ of positive Lyapunov exponent.  Hence $T_\phi$ is $C^1$ at $q_\phi^-$, as required.
\end{proof}

\section{Multifractal spectrum of pointwise dimension} \label{sec:dspec proof}
In this section we prove that the dimension spectrum of pointwise dimension $\tilde \dimspec_{\mu}$  is the Legendre-Fenchel transform of the temperature function $T_\phi$.  The following is a slightly embellished version of Theorem~\ref{thm:main multi}.
\begin{teo}  \label{thm:mfd}
 Suppose that $f\in \F_D$ and $\phi\in \P_H$.  If $\mu_\phi\neq \mu_{ac}$ then the dimension spectrum satisfies the following equations
\begin{equation*}
 \dimspec_{\mu_\phi}(\alpha)= \inf_{q \in \mathbb{R}} \left(T_\phi(q) +q \alpha \right)
\end{equation*}
for all  $\alpha \in (-DT_\phi(q_\phi^+), -D^+T_\phi(q_\phi^-))$.  Or equivalently,
\[ \dimspec_{\mu_\phi}(-DT_\phi(q))= T_\phi(q) -q DT_\phi(q) \]
for $q\in (q_\phi^-, q_\phi^+]$.
\end{teo}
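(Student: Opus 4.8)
The plan is to follow the now-standard conditional variational approach to multifractal analysis, but carried out through the inducing schemes of Theorem~\ref{thm:schemes} rather than directly on $(I,f)$, exactly as the structure of the paper suggests. First I would observe that since $P(\phi)=0$ and $\mu_\phi\neq\mu_{ac}$, Theorem~\ref{thm:Tphi properties} gives us a differentiable, strictly convex temperature function $T_\phi$ on $(q_\phi^-,q_\phi^+)$, so the Legendre--Fenchel transform $\inf_q(T_\phi(q)+q\alpha)$ is realized for each $\alpha\in(-DT_\phi(q_\phi^+),-D^+T_\phi(q_\phi^-))$ at a unique $q=q(\alpha)\in(q_\phi^-,q_\phi^+)$, and the two displayed formulas are then equivalent by the Legendre duality $\alpha=-DT_\phi(q)$, $\dimspec(\alpha)=T_\phi(q)-qDT_\phi(q)=\dim_H(\mu_{\phi_q})$ using part c) of that theorem. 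So the real content is to prove $\dim_H(K(\alpha))=\dim_H(\mu_{\phi_q})$ for $q=q(\alpha)$.

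For the lower bound I would use the equilibrium state $\mu_{\phi_q}$ for $\phi_q=-T_\phi(q)\log|Df|+q\phi$ provided by Theorem~\ref{thm:unique mu_q}. The key computation is that for $\mu_{\phi_q}$-a.e. $x$ the pointwise dimension $d_{\mu_\phi}(x)$ equals $\alpha$: this follows because $\mu_{\phi_q}$-a.e. $x$ is generic, so $\frac1n\log\mu_\phi(B(x,r_n))$, $\frac1n\log|Df^n(x)|$ and $\frac1n S_n\phi(x)$ all converge to their integrals, and then $d_{\mu_\phi}(x)=\frac{\int\phi\,d\mu_{\phi_q}}{\int\log|Df|\,d\mu_{\phi_q}}=-DT_\phi(q)=\alpha$; one needs the Gibbs/conformal-measure comparison between $\mu_\phi$ and balls, which is where $f\in\F_D$ and the smoothness of induced potentials (via \cite[Lemma 4]{BTeqgen}) and the conformal measure of Remark~\ref{rmk:conf} enter. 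Since $\mu_{\phi_q}(K(\alpha))=1$, Proposition~\ref{prop:lower} combined with $\dim_H(\mu_{\phi_q})=h(\mu_{\phi_q})/\lambda(\mu_{\phi_q})$ (by \cite{Hofdim}) yields $\dim_H(K(\alpha))\ge T_\phi(q)-qDT_\phi(q)$. Here I would pass to an inducing scheme $(X^n,F_n)$ compatible with $\mu_{\phi_q}$, do the computation on the Markov system where bounded distortion makes cylinders comparable to balls (as in the Lyapunov spectrum section), and push back using that $\pi_n$ is bilipschitz.

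For the upper bound I would, for each inducing scheme $(X^n,F_n)$ from Theorem~\ref{thm:schemes}, define the lifted level set $K_n(\alpha)$ inside $X^n$ and cover it efficiently using the Gibbs property of $\mu_{\Phi_q}$: the standard Besicovitch/covering argument on the countable Markov shift shows $\dim_H(K_n(\alpha))\le T_\phi(q)-qDT_\phi(q)$, because on a cylinder $I_k^n(x)$ one has, up to distortion, $|I_k^n(x)|^{T_\phi(q)}e^{qS_k\Phi_n}\asymp_{dis}\mu_{\Phi_q}(I_k^n(x))$ with $P(\Phi_q)=0$, and the exponential rate constraints defining $K_n(\alpha)$ force the Hausdorff sum $\sum|I_k^n(x)|^{s}$ to stay bounded precisely when $s\ge T_\phi(q)-qDT_\phi(q)$. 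Then by Theorem~\ref{thm:schemes}(c) together with the Rivera-Letelier--Shen result \cite{rivshen} that $\dim_H\{x:\overline\lambda_f(x)=0\}=0$, the part of $K(\alpha)$ not captured by $\cup_{n=1}^N(X^n,F_n)^\infty$ has dimension $<\eps$ for $N$ large (note $\alpha>0$ here since $\phi<0$ forces $q(\alpha)$ in the allowed range to give positive pointwise Lyapunov exponent), so $\dim_H(K(\alpha))\le\sup_n\dim_H(\pi_n(K_n(\alpha)))=T_\phi(q)-qDT_\phi(q)$. The main obstacle I expect is the upper bound away from the inducing schemes: controlling the non-liftable points and showing the covering estimate is uniform over the infinitely many branches and all cylinder depths — this is exactly the place where compactness is lost, and it is handled only by invoking $f\in\F_D\subset\F_{ac}$ and the dimension-zero statement for $\{\overline\lambda_f=0\}$; a secondary technical point is justifying that $\mu_\phi$-measures of balls are comparable to the conformal measure on dynamically defined intervals, which requires the negative Schwarzian/Koebe distortion control underlying the whole family $\F$.
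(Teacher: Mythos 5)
Your proposal is correct and follows essentially the same route as the paper: the lower bound via Proposition~\ref{prop:lower} applied to the equilibrium state $\mu_{\phi_q}$ of Theorem~\ref{thm:unique mu_q}, showing $d_{\mu_\phi}(x)=-DT_\phi(q)$ for $\mu_{\phi_q}$-a.e.\ $x$ and invoking Theorem~\ref{thm:Tphi properties}\,c), and the upper bound via the Gibbs covering estimate on each inducing scheme together with Theorem~\ref{thm:schemes}(c) and the Rivera-Letelier--Shen dimension-zero result to handle non-liftable points (the paper simply defers this to the argument of \cite[Theorem A]{T}). The only slip is a sign: the generic pointwise dimension is $d_{\mu_\phi}(x)=-\int\phi\,d\mu_{\phi_q}\big/\lambda(\mu_{\phi_q})=-DT_\phi(q)$, not $\int\phi\,d\mu_{\phi_q}\big/\lambda(\mu_{\phi_q})$.
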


\begin{rem}
As in Remark~\ref{rem:lack of LE upper bd}, we expect that $\dimspec_\mu(\alpha)=\inf_{q \in \mathbb{R}} \left(T_\phi(q) +q \alpha \right)$ for $\alpha\in [-D^+T_\phi(q_\phi^-),-D^-T_\phi(q_\phi^-)]$.
Similarly, if $q\notin [q_\phi^-, q_\phi^+]$ then any equilibrium state $\mu$ for $\phi_q$  must have $h(\mu)=0$.  In this case $\dim_H(\mu)=0$.  This suggests that $\dimspec_\mu(\alpha)=0$ for $\alpha\notin (-DT_\phi(q_\phi^+),-D^-T_\phi(q_\phi^-)]$.
\label{rem:lack of dim upper bd}
\end{rem}

\begin{proof}[Proof of Theorem \ref{thm:mfd}, the lower bound]
Let us start considering the range of values for which the function $T_\phi(q)$ is finite. Denote by $\mu_{\phi_q}$ the unique equilibrium measure corresponding to
$-T_\phi(q) \log |Df| + q \phi$ from Theorem~\ref{thm:unique mu_q}.

\iffalse That is
\begin{equation*}
h(\mu_{\phi_q}) -T_\phi(q) \int \log |Df| \ d \mu_{\phi_q} + q \int \phi \ d\mu_{\phi_q} = 0.
\end{equation*}
Hence
\[\frac{h(\mu_{\phi_q})}{\int \log |Df| \ d\mu_{\phi_q}} = T_\phi(q) - q \frac{\int \phi \ d \mu_{\phi_q}}{\int \log |Df| d \mu_{\phi_q}}.\]
Therefore, the Hausdorff dimension of the measure is given by
\[\dim_H(\mu_{\phi_q}) = \frac{h(\mu_{\phi_q})}{\int \log |Df| \ d\mu_{\phi_q}} =T_\phi(q) - q \frac{\int \phi \ d \mu_{\phi_q}}{\int \log |Df| d \mu_{\phi_q}}.\]
\fi

As in the proof of \cite[Theorem A]{T}, $d_{\mu_{\phi_q}}(x)=\frac{-\int\phi~d\mu_{\phi_q}}{\lambda(\mu_{\phi_q})}$.  Therefore

\[\mu_{\phi_q} \left(I \Sm K \left( \frac{-\int \phi \ d \mu_{\phi_q}}{\int \log |Df| d \mu_{\phi_q}} \right) \right) = \mu_{\phi_q} \left(I\Sm \tilde K \left( \frac{-\int \phi \ d \mu_{\phi_q}}{\int \log |Df| d \mu_{\phi_q}} \right) \right)= 0.\]
Therefore by Theorem~\ref{thm:Tphi properties} c),
\begin{align*}\dimspec_{\mu_\phi} \left(K \left( \frac{-\int \phi \ d \mu_{\phi_q}}{\int \log |Df| d \mu_{\phi_q}} \right) \right)  %\ge \tilde \dimspec_{\mu_\phi} \left(K \left( \frac{-\int \phi \ d \mu_{\phi_q}}{\int \log |Df| d \mu_{\phi_q}} \right) \right) \\
&\ge \dim_H(\mu_{\phi_q})=
T_\phi(q) -q DT_\phi(q).\end{align*}
In this way we obtain the lower bounds for $\dimspec_{\mu_\phi}$ %\tilde \dimspec_{\mu_\phi}$
for any $\alpha$ in the range of the derivative of $T_\phi$.
%In particular if \[\lim_{q \to 0+} DT_\phi(q)= -\infty ,\] we have that the domain of the dimension spectrum is unbounded.
\end{proof}

\begin{proof}[Proof of Theorem \ref{thm:mfd}, the upper bound]
The proof of the upper bound follows exactly as in the proof of the upper bound in \cite[Theorem A]{T}.  We note that the H\"older assumption on $\phi$ ensures that for an inducing scheme $(X, F)$ the induced measure $\tilde\mu_\phi$ has $d_{\tilde\mu_\phi}(x)=d_{\mu_\phi}(x)$ for any $x\in X\cap (X,F)^\infty$.
\iffalse

Let $(X,F,\tau)$ be an inducing scheme. Denote by $\Phi_q$ the induced potential corresponding to $-T_\phi(q) \log |Df| + q \phi$. Note that
$$P(-T_\phi(q) \log |Df| + q \phi)=P(\Phi_q)=0.$$
Moreover, denote by $\tilde{\mu_{\phi_q}}$ the lift of the measure $\mu_{\phi_q}$. Hence $\tilde{\mu_{\phi_q}}$ is the equilibrium (Gibbs) measure for $\Phi_q$.

HERE I SUPPOSE THAT ONE HAS TO FOLLOW YOUR ARGUMENTS.
\fi
\end{proof}

%\begin{coro}
%The dimension spectrum $\dimspec_{\mu_\phi}$ is concave and differentiable.
%\end{coro}

The following result is a consequence of the Legendre-Fenchel relation between the temperature function and the dimension spectrum. Let us stress that there is  strong contrast between the behaviour of the dimension spectrum described in Theorem \ref{thm:Dunbdd dom} and the dimension spectrum for equilibrium states in hyperbolic systems (see for example \cite[Chapter 7]{Pesbook}). The lack of hyperbolicity of the map $f$ is reflected in the regularity properties of the spectrum.

\begin{teo}  \label{thm:Dunbdd dom}
Suppose that $f\in \F_D , \phi\in \P_H$ and  $\mu_{ac}\neq \mu_\phi$. Assume that the temperature function is such that
\[
T_\phi(q)=
\begin{cases}
\text{infinite} & \text{ if } q < 0,\\
finite & \text{ if } q \ge 0.
\end{cases}
\]
Then the domain of $\dimspec_{\mu_\phi}$ is unbounded. Moreover,
$D^+T_{\phi}(0) =\frac{\int\phi~d\mu_{ac}}{\lambda(\mu_{ac})}$ and for every $\alpha \geq -D^+T_{\phi}(0)$ we have that $\dimspec_{\mu_\phi}(\alpha)= T_\phi(0)=1$.
\end{teo}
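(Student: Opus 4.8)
The plan is to first identify the right derivative $D^+T_\phi(0)$ explicitly, then to determine $\dimspec_{\mu_\phi}(\alpha)$ beyond the ``Gibbs range'' of $\alpha$, and finally to read off that the domain is unbounded. \emph{Structural preliminaries.} That $T_\phi$ is infinite on $\{q<0\}$ says precisely that $T_\phi$ has an infinite phase transition at $q_\infty=0$ (so $q_\phi^-=0$). By the proof of Proposition~\ref{prop:phase trans at 0}(b) this forces $\lambda_m=0$, whence $p(t):=P(-t\log|Df|)\ge 0$ for all $t\in\R$. Since $f\in\F_D\subset\F_{ac}$ we have $T_\phi(0)=1$, the smallest root of the Bowen equation $p(t)=0$ (cf.\ Section~\ref{sec:temp}), and by Remark~\ref{rmk:acip pos ent} the acip $\mu_{ac}$ satisfies $h(\mu_{ac})=\lambda(\mu_{ac})>0$, i.e.\ $\mu_{ac}$ is an equilibrium state for $-\log|Df|$. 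As $p$ is strictly decreasing on $(-\infty,t^+)$ (its equilibrium measures have positive Lyapunov exponent, Theorem~\ref{thm:ITeq_exist_unique}), is $\ge 0$, and has smallest root $1$, we must have $t^+=1$; in particular Lemma~\ref{lem:small LE big dim} is available. By Proposition~\ref{prop:phase trans at 0} and Theorem~\ref{thm:Tphi properties}, $T_\phi$ is finite, convex and strictly decreasing on $[0,\infty)$.

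\emph{Computing $D^+T_\phi(0)$.} For ``$\ge$'': for $q>0$ the variational principle applied to $\mu_{ac}$ and the zero-pressure potential $-T_\phi(q)\log|Df|+q\phi$ gives $0\ge h(\mu_{ac})-T_\phi(q)\lambda(\mu_{ac})+q\int\phi~d\mu_{ac}$; using $h(\mu_{ac})=\lambda(\mu_{ac})$ and dividing by $q\lambda(\mu_{ac})>0$ yields $(T_\phi(q)-1)/q\ge \int\phi~d\mu_{ac}/\lambda(\mu_{ac})$, and letting $q\searrow 0$ gives $D^+T_\phi(0)\ge \int\phi~d\mu_{ac}/\lambda(\mu_{ac})$; in particular $D^+T_\phi(0)$ is finite. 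For ``$\le$'' I would use Theorem~\ref{thm:Tphi properties}(b): $DT_\phi(q)=\int\phi~d\mu_{\phi_q}/\lambda(\mu_{\phi_q})$ for $q\in(0,q_\phi^+)$, and by convexity $DT_\phi(q)\searrow D^+T_\phi(0)$ as $q\searrow 0$, so it suffices to show the right-hand side tends to $\int\phi~d\mu_{ac}/\lambda(\mu_{ac})$. Using the finiteness of $D^+T_\phi(0)$, Ruelle's inequality $h(\mu_{\phi_q})\le\lambda(\mu_{\phi_q})$ and convexity of $T_\phi$, one checks $\lambda(\mu_{\phi_q})$ stays bounded away from $0$ and $\infty$ as $q\searrow 0$; hence for $q$ in a compact interval and entropies bounded below, finitely many of the inducing schemes of Theorem~\ref{thm:schemes} suffice, so along a subsequence all $\mu_{\phi_q}$ lift to one scheme $(X,F)$ with lifts $\mu_{F,q}$, the Gibbs measures of $\Phi_q=-T_\phi(q)\log|DF|+q\Phi$. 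As $q\searrow 0$ these potentials converge on $(X,F)^\infty$ to $-\log|DF|$, whose Gibbs measure is the lift of $\mu_{ac}$ (Theorem~\ref{thm:schemes}(b)); so $\mu_{F,q}$ converges on cylinders to that lift and, $\tau$ being uniformly integrable, $\int\tau~d\mu_{F,q}$, $\int\log|DF|~d\mu_{F,q}$ and $\int\Phi~d\mu_{F,q}$ all pass to the limit. Dividing by $\int\tau~d\mu_{F,q}$ and using the Abramov/Kac formula gives $\lambda(\mu_{\phi_q})\to\lambda(\mu_{ac})$ and $\int\phi~d\mu_{\phi_q}\to\int\phi~d\mu_{ac}$, hence $DT_\phi(q)\to\int\phi~d\mu_{ac}/\lambda(\mu_{ac})$. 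Set $\alpha_0:=-D^+T_\phi(0)=-\int\phi~d\mu_{ac}/\lambda(\mu_{ac})\in(0,\infty)$.

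\emph{The spectrum for $\alpha\ge\alpha_0$, and unboundedness.} The upper bound is immediate: $K(\alpha)\subseteq[0,1]$, so $\dimspec_{\mu_\phi}(\alpha)\le 1=T_\phi(0)$. For the lower bound at $\alpha_0$, I would run the lower-bound argument of Theorem~\ref{thm:mfd} with the equilibrium state $\mu_{ac}$ of $-\log|Df|$ (the ``$q=0$'' case): as there, $d_{\mu_\phi}(x)=-\int\phi~d\mu_{ac}/\lambda(\mu_{ac})=\alpha_0$ for $\mu_{ac}$-a.e.\ $x$, so $\mu_{ac}(K(\alpha_0))=1$ and, $\mu_{ac}$ being absolutely continuous, $\dimspec_{\mu_\phi}(\alpha_0)\ge\dim_H(\mu_{ac})=1$. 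For $\alpha>\alpha_0$ I would argue as in Lemma~\ref{lem:small LE big dim}: fix $\eps>0$; since $\lambda_m=0$, Lemma~\ref{lem:per pt small LE} gives periodic points $x_p$ with $\lambda(x_p)$ arbitrarily small, so $-\int\phi~d\mu_{x_p}/\lambda(\mu_{x_p})\ge -\phi_{\max}/\lambda(x_p)>\alpha$; building (as in the proof of Lemma~\ref{lem:small LE big dim}) an inducing scheme whose points all shadow the orbit of such an $x_p$, truncating to $N$ branches and considering equilibrium states of potentials $-t\log|D\tilde F_N|+q\tilde\Phi$, one gets for $N$ large an ergodic measure $\nu$, compatible with the truncated scheme, whose projection satisfies $-\int\phi~d\nu/\lambda(\nu)=\alpha$ (by the intermediate value theorem, interpolating between the $\mu_{ac}$-like equilibrium states, which realise $\alpha_0$, and the behaviour along the orbit of $x_p$) and $\dim_H(\nu)=h(\nu)/\lambda(\nu)\ge 1-\eps$ (since, as in Lemma~\ref{lem:small LE big dim}, the relevant pressures are close to $0$, which forces $h(\nu)/\lambda(\nu)$ close to $1$). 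As $d_{\mu_\phi}(x)=-\int\phi~d\nu/\lambda(\nu)=\alpha$ for $\nu$-a.e.\ $x$, we get $\nu(K(\alpha))=1$, so $\dimspec_{\mu_\phi}(\alpha)\ge\dim_H(\nu)\ge 1-\eps$; letting $\eps\to0$ gives $\dimspec_{\mu_\phi}(\alpha)\ge 1$. Thus $\dimspec_{\mu_\phi}(\alpha)=1=T_\phi(0)$ for all $\alpha\ge\alpha_0$, and in particular the domain of $\dimspec_{\mu_\phi}$ is unbounded.

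\emph{Main obstacle.} The hard part is the lower bound for $\alpha>\alpha_0$: one must produce genuinely \emph{ergodic} measures (not convex combinations, whose pointwise dimensions realise only the two extreme values) with prescribed ratio $-\int\phi/\lambda=\alpha$ and Hausdorff dimension within $\eps$ of $1$. This is precisely where the infinite phase transition hypothesis (equivalently $\lambda_m=0$, equivalently $t^+=1$) is essential: long excursions near a critical point make $\log|Df|$ extremely negative on a small set, driving $\lambda(\nu)$ down and $-\int\phi/\lambda(\nu)$ up by the required amount at negligible cost to the dimension. The hypothesis $\mu_\phi\ne\mu_{ac}$ is used here to ensure this ratio is not constant in $\nu$, so that every $\alpha>\alpha_0$ is attainable. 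A secondary technical point is the convergence $\mu_{\phi_q}\to\mu_{ac}$ in the ``$\le$'' half above, which must be carried out on the common inducing scheme, where $\lambda$ is a ratio of continuous functionals rather than merely upper semicontinuous.
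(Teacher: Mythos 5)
Your proposal is correct and follows essentially the same route as the paper's own (very terse) proof: identify $\mu_{\phi_0}=\mu_{ac}$ via Ledrappier and apply the derivative formula to get $D^+T_\phi(0)=\int\phi\,d\mu_{ac}/\lambda(\mu_{ac})$, then obtain $\dimspec_{\mu_\phi}(\alpha)=1$ for $\alpha\ge-D^+T_\phi(0)$ by the truncated-inducing-scheme argument of Lemma~\ref{lem:small LE big dim}. You fill in substantially more detail than the paper (the two-sided estimate for $D^+T_\phi(0)$ via variational principle and convergence of $\mu_{\phi_q}$ on a common scheme, and the interpolation between $\mu_{ac}$-like states and low-Lyapunov periodic orbits giving the lower bound for $\alpha>\alpha_0$), but these are exactly the steps the paper leaves implicit.
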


\begin{proof}
The usual derivative formulas imply that if there exists a measure $\mu_{\phi_0}$ for the potential $\phi_0$ then $D^+T_\phi(0)=\frac{\int\phi~d\mu_{\phi_0}}{\lambda(\mu_{\phi_0})}$.  Since $\phi_0:=-\log|Df|$, as in \cite{Led}, $\mu_{\phi_0}=\mu_{ac}$ the acip.  The fact that $\dimspec_{\mu_\phi}(\alpha)= T_\phi(0)$ for $\alpha \geq -D^+T_{\phi}(0)$ follows as in Lemma~\ref{lem:small LE big dim}.
\end{proof}

We finish this paper by giving a proposition which gives further information on the condition $\mu_\phi\neq\mu_{ac}$ imposed in the above theorems.  One way that $\mu_\phi$ can be equal to $\mu_{ac}$ is if $\phi$ is \emph{cohomologous} to $-\log|Df|$, that is if there exists a solution $\psi:I \to \R$ to the equation
\begin{equation}\phi=-\log|Df|+ \psi\circ f -\psi.\label{eq:cohom} \end{equation}
It is unknown if this is the only way that $\mu_\phi$ can be equal to $\mu_{ac}$.  The study of such equations, and their smoothness is part of Liv\v sic theory, studied for interval maps with critical points in \cite{BrHoNi}.

Let $\F_D'\supset\F_D$ be the class of maps as above, but allowing preperiodic critical points.  The following result is proved using ideas from \cite{BrHoNi}.

\begin{prop}\label{prop:cohom}
Let $f\in\F_D'$ be a unimodal map.  If $\phi:I \mapsto \R$ is a H\"older function then
the only way \eqref{eq:cohom} can have a solution  is if the critical point is preperiodic.
\end{prop}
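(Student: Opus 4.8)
The plan is to suppose that some $\psi:I\to\R$ solves \eqref{eq:cohom} with $\phi$ H\"older, and to track the singularities that \eqref{eq:cohom} forces on $\psi$ along the grand orbit of $c$; these turn out to be incompatible with the regularity of $\phi$ unless $\orb^+(c)$ is finite, i.e. unless $c$ is preperiodic. Write $c_k:=f^k(c)$. Telescoping \eqref{eq:cohom} gives, for all $x$ and $n$,
\[\psi(f^n(x))-\psi(x)=\log|Df^n(x)|+\sum_{j=0}^{n-1}\phi(f^j(x)).\]
Near $c$ the normal form $f(x)=f(c)+g_c(x)^{\ell_c}$ (with $\ell_c$ even, since $f$ is unimodal) gives $\log|Df(x)|=(\ell_c-1)\log|x-c|+(\text{H\"older})$ and $|f(x)-c_1|\asymp|x-c|^{\ell_c}$, while away from $c$ the function $\log|Df|$ is H\"older.

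The first step is a regularity reduction following \cite{BrHoNi}: using the Koebe/bounded--distortion estimates available for $f\in\F$ (negative Schwarzian) together with the extra smoothness coming from $f\in\F_D'$ --- an application of \cite[Lemma 4]{BTeqgen}, as elsewhere in this paper --- one shows that any solution $\psi$ agrees $m$-a.e.\ with a representative, still written $\psi$, that is locally H\"older off the grand orbit $\bigcup_{n\ge 0}f^{-n}(\orb^+(c))$ of $c$ \emph{with distortion/H\"older constants uniform over the orbit}, and that near each point $\xi$ of that grand orbit has the form $\psi(x)=\kappa_\xi\log|x-\xi|+(\text{H\"older near }\xi)$ for a constant $\kappa_\xi$. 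Feeding the displayed identity into the normal forms above yields a linear recursion for the $\kappa_\xi$: if $\xi$ is not a critical point then, taking $n=1$ and using that $f$ is a local diffeomorphism at $\xi$, one gets $\kappa_{f(\xi)}=\kappa_\xi$; at $\xi=c$, using $|f(x)-c_1|\asymp|x-c|^{\ell_c}$, one gets $\ell_c\kappa_{c_1}-\kappa_c=\ell_c-1$. Hence $\kappa_{c_1}=\kappa_{c_2}=\cdots=:a$, $\kappa_c=\ell_c a-(\ell_c-1)$, and every point reached from some $c_k$ ($k\ge 1$) or from $c$ by a branch of an iterate $f^m$ that avoids the critical point inherits the corresponding coefficient.

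Now assume $c$ is not preperiodic. Then $c,c_1,c_2,\dots$ are pairwise distinct; $c_1$ is an extremum so $f^{-1}(c_1)=\{c\}$ and the relevant branch folds there (which is why $\kappa_c$ is tied to $\kappa_{c_1}$ by the $\ell_c$-factor); while for $k\ge2$ the point $c_k$ has a preimage $\xi_k\ne c_{k-1}$ with $\xi_k\ne c$ (else $c_1=c_k$), at which $f$ is a local diffeomorphism, and $\xi_2$ is itself not periodic and not in $\orb^+(c)$. Since a topologically transitive $f\in\F$ is locally eventually onto (see \cite{MSbook}), and since $c$ and $\xi_2$ are not periodic, \emph{every} preimage of $c$ (resp.\ of $\xi_2$) is reached by a critical-point-free iterate and the families $\bigcup_nf^{-n}(c)$ and $\bigcup_nf^{-n}(\xi_2)$ are dense and meet every cylinder generation. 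If $a=0$ then $\kappa_c=-(\ell_c-1)\ne0$; if $a\ne0$ then $\kappa_{\xi_2}=a\ne0$. Either way $\psi$ carries a logarithmic singularity of a fixed \emph{nonzero} coefficient at a dense family of points, one in each cylinder of each generation-$n$ partition, with uniform remainder control; estimating $\int_{X_\xi}\bigl|\log|x-\xi|\bigr|\,dm\asymp|X_\xi|\bigl(1+\log(1/|X_\xi|)\bigr)$ over a generation-$n$ partition and summing over $n$ forces $\int_I|\psi|\,dm=\infty$ --- equivalently, the uniform representative cannot be finite on a full-measure set --- contradicting the reduction. When $\orb^+(c)$ is finite the argument breaks, because the backward propagation from the singular orbit points must pass through the folding branch at $c$, so the singular set of $\psi$ stays finite; indeed the conclusion is then genuinely false, as for the Chebyshev quadratic map, where $\log|Df|=\log2+\psi\circ f-\psi$ with $\psi=\log|h'|$, $h$ the conjugacy to the full tent map, so the H\"older potential $\phi\equiv-\log2$ realises \eqref{eq:cohom}.

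The main obstacle is the regularity reduction of the second paragraph: establishing, for these (multi)modal maps, that an arbitrary solution of \eqref{eq:cohom} coincides a.e.\ with a representative that is H\"older --- with uniform constants --- off the grand orbit of $c$ and has honest logarithmic singularities of the stated strengths along it. This is exactly the Liv\v sic-regularity machinery of \cite{BrHoNi}; the delicate points are the control of distortion along inverse branches that may approach the (possibly recurrent) critical point, which is where the defining condition of $\F_D$ is really used, and the passage from an a priori merely measurable $\psi$ to the canonical representative on which the pointwise singularity analysis is meaningful. Granted that reduction, the combinatorial recursion for the $\kappa_\xi$ and the final integrability contradiction are elementary.
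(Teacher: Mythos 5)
Your proposal takes a genuinely different (and rather more elaborate) route than the paper, and it has a real gap at the very step you flag as the ``main obstacle'' and also at the concluding integrability estimate. Let me separate the two.

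The paper's proof is short: cite \cite{BrHoNi} (Theorem 6 together with Corollary~3) to conclude that any solution $\psi$ to $\phi+\log|Df|=\psi\circ f-\psi$ is bounded on every interval compactly contained in the core $[f^2(c),f(c)]$; observe directly from the equation, by pushing the logarithmic singularity of $\log|Df|$ at $c$ forward along the orbit, that $\psi$ must be unbounded at each $c_n$, $n\ge1$; and then note that if $c$ is not preperiodic some $c_n$ lies in the interior of the core, which is a contradiction. Your singularity-coefficient recursion $\kappa_{f(\xi)}=\kappa_\xi$ for non-critical $\xi$ and $\ell_c\kappa_{c_1}-\kappa_c=\ell_c-1$ is a correct and illuminating refinement of the forward-propagation step, and the Chebyshev example correctly shows the converse can hold. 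But once you have unboundedness of $\psi$ at a single interior point of the core, you are done by \cite[Corollary 3]{BrHoNi}; there is no need to pass to the full grand orbit, nor to the dense-singularity picture, nor to integrability.

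The gap: the final contradiction in your argument is that $\int_I|\psi|\,dm=\infty$, ``contradicting the reduction.'' But the cohomology equation \eqref{eq:cohom} imposes no a priori $L^1(m)$ bound on $\psi$, and the reduction from \cite{BrHoNi} gives H\"older regularity in the Markov/symbolic sense (equivalently, boundedness on compacta of the core), not integrability on $I$. Moreover the estimate itself is dubious: you bound $\int_{X_\xi}|\log|x-\xi||\,dm\asymp|X_\xi|(1+\log(1/|X_\xi|))$ over a generation-$n$ partition and ``sum over $n$'', but the integral $\int_I|\psi|\,dm$ is a single integral, not a sum of independent contributions over generations --- the neighbourhoods of singular points of different generations overlap, and you cannot simply superpose them. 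Even the precise form of your regularity reduction (that $\psi$ has an honest logarithmic singularity $\kappa_\xi\log|x-\xi|+{}$H\"older at \emph{every} grand-orbit point, with uniform constants) is stronger than what \cite{BrHoNi} gives and is not established here. The correct way to close the proof is the paper's: after the (forward) singularity propagation you already have, invoke boundedness of $\psi$ on compacta of $[f^2(c),f(c)]$ and observe that for a non-preperiodic unimodal map some $c_n$ with $n\ge 3$ lies in the interior; this forces $c$ preperiodic, and indeed $f^2(c)=0$ with $0$ a fixed point.
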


\begin{proof}
Theorem 6 of \cite{BrHoNi} holds for $f\in \F_D'$.  Note that we require weaker conditions on the growth of derivatives using the condition in \cite[Proposition 6]{T}.  Therefore the potential $\phi':=\phi+\log|Df|$ satisfies the conditions in \cite[Theorem 6]{BrHoNi}: in particular it satisfies condition (2) of \cite[Section 3.1]{BrHoNi} for example.  By that theorem, any solution $\psi$ to the equation $\phi'=\psi\circ f -\psi$ must be H\"older continuous.
Letting $c$ be the critical point, we may assume that $f(c)$ is a maximum for $f$.  As in \cite[Corollary 3]{BrHoNi}, $\psi$ must be bounded on any interval compactly contained in $[f^2(c), f(c)]$.  But by construction, $\psi$ must be unbounded on any element of $\cup_{n\ge 1}f^n(c)$.  In the case of transitive unimodal maps, this can only occur when $f^2(c)=0$ and $0$ is a fixed point.
\end{proof}


\begin{thebibliography}{99}

\bibitem[AL]{AvLyu} A.\ Avila, M.\  Lyubich,
\emph{Hausdorff dimension and conformal measures of Feigenbaum Julia sets,}
 J. Amer. Math. Soc. \textbf{ 21}  (2008) 305--363.

\bibitem[Ba]{Bar} L.\ Barreira, \emph{Dimension and recurrence in hyperbolic dynamics,} Progress in Mathematics, 272. Birkh\"auser Verlag, Basel, 2008.

\bibitem[BaI]{BarIo} L.\ Barreira G.\ Iommi,
 \emph{Phase transitions and Multifractal analysis for parabolic horseshoes,} to appear in Israel J. Math.

\bibitem[BPS]{BarPeSc}  L.\ Barreira, Y.\ Pesin, J.\ Schmeling,
 \emph{On a general concept of multifractality: multifractal spectra for dimensions, entropies, and Lyapunov exponents. Multifractal rigidity,} Chaos
 \textbf{7} (1997) 27--38.

\bibitem[BaS]{BarSc} L.\ Barreira, J.\ Schmeling,
  \emph{Sets of ``non-typical'' points have full topological entropy and full Hausdorff dimension}, Israel J. Math. \textbf{116} (2000) 29--70.

%\bibitem[Bi]{Bill} P.\ Billingsley, {\em Probability and measure (second edition),} John Wiley and Sons 1986.

%\bibitem[BlLy]{BloLyu} A. M.\ Blokh, M.\ Lyubich,\emph{Measurable dynamics of $S$-unimodal maps of the interval,}  Ann. Sci. \'Ecole Norm. Sup. (4) {\bf 24} (1991) 545--573.

%\bibitem[BoR]{BohRand} T.\ Bohr, D.\ Rand,{\em The entropy function for characteristic exponents,}Phys. D {\bf 25} (1987) 387--398.


%\bibitem[Bo]{bo} R. Bowen, \emph{Equilibrium States and the Ergodic Theory of Anosov Diffeomorphisms.} Springer Lect. Notes in Math. 470 (1975).

\bibitem[BHN]{BrHoNi} H.\ Bruin, M.\ Holland, M.\ Nicol,
\emph{Liv\v{s}ic regularity for Markov systems,}
Ergodic Theory Dynam. Systems \textbf{25} (2005) 1739--1765.

\bibitem[BK]{BrKell} H.\ Bruin, G.\ Keller,  \emph{Equilibrium states for $S$-unimodal maps,} Ergodic Theory Dynam. Systems \textbf{18} (1998) 765--789.

%\bibitem[BS]{BrvS} H.\ Bruin, S.\ van Strien,{\em Expansion of derivatives in one--dimensional dynamics}, Isr.J. Math. {\bf 137} (2003) 223--263.

%\bibitem[BT1]{BTholo} H.\ Bruin, M.\ Todd, {\em Markov Extensions and Lifting Measures for Complex Polynomials.} Ergodic Theory Dynam. Systems {\bf 27} (2007) 743--768.


\bibitem[BRSS]{BRSS} H.\ Bruin, J.\ Rivera-Letelier, W.\ Shen, S.\ van Strien,
{\em Large derivatives, backward contraction and invariant densities
for interval maps,} Invent. Math. {\bf 172} (2008) 509--593.

\bibitem[BT1]{BTeqgen} H.\ Bruin, M.\ Todd,
 \emph{Equilibrium states for potentials with $\sup \phi-\inf \phi < h_{top}(f)$, }  Comm. Math. Phys. \textbf{ 283} (2008) 579-611.

\bibitem[BT2]{BTeqnat} H.\ Bruin, M.\ Todd,
\emph{Equilibrium states for interval maps: the potential $ -t \log|Df|$,}
Ann. Sci. \'Ecole Norm. Sup. (4) \textbf{ 42} (2009) 559--600.

%\bibitem[BuS]{BuSar} J. Buzzi and O. Sarig \emph{Uniqueness of equilibrium measures for countable Markov shifts and multidimensional piecewise expanding maps.}  Ergodic Theory Dynam. Systems  23  (2003),  no. 5, 1383--1400.

\bibitem[C]{Ced} S.\ Cedervall,
\emph{Invariant measures and decay of correlations for S-multimodal interval maps,}
Thesis, Imperial College, London, 2006.

%\bibitem[C]{ch} Y.M. Chung \emph{Birkhoff spectra for one-dimensional maps with some hyperbolicity.} Preprint (arXiv:0803.1522).


%\bibitem[CRL]{CJ} M.I.\ Cortez and J.\ Rivera-Letelier \emph{Invariant measures of minimal post-critical sets of logistic maps,} Preprint arXiv:0804.4550, to appear in Israel Journal of Mathematics.


%\bibitem[D]{Dobbs} N.\ Dobbs,{\em Renormalisation induced phase transitions for unimodal maps,} Preprint, arXiv:0712.3023, to appear in Comm. Math. Phys.

\bibitem[De]{DenUrb} M.\ Denker, M.\ Urba\'nski,
\emph{On the existence of conformal measures,}
Trans. Amer. Math. Soc. \textbf{328} (1991) 563--587.

\bibitem[D1]{Dobcusp} N.\ Dobbs,
\emph{On cusps and flat tops,}
Preprint,  arXiv:0801.3815.

\bibitem[D2]{Dob_comp} N.\ Dobbs,
\emph{Measures with positive Lyapunov exponent and conformal measures in rational dynamics,}
Preprint, arXiv:0804.3753.

\bibitem[GPR]{GelPrRa} K.\ Gelfert, F.\ Przytycki, M.\ Rams,
 \emph{Lyapunov spectrum for rational maps,}  Preprint,  arXiv:0809.3363.


\bibitem[GR]{GelRa} K.\ Gelfert, M.\ Rams,  \emph{The Lyapunov spectrum of some parabolic systems,} Ergodic Theory Dynam. Systems {\bf 29} (2009) 919-940.

%\bibitem[GSS]{GSS} J.\ Graczyk, D.\ Sands, G.\ Swi\c atek, \emph{Decay of geometry for unimodal maps: negative Schwarzian case,} Ann. of Math. (2) {\bf 161} (2005) 613--677.


%\bibitem[Gu1]{gu} B.M: Gurevi\v c \emph{Topological entropy for denumerable Markov chains.}  Soviet. Math. Dokl. 10, 911-915 (1969).

%\bibitem[Gu2]{gu1} B.M. Gurevi\v c \emph{Shift entropy and Markov measures in the path space of a denumerable graph.}  Soviet.Math.Dokl. 11, 744-747 (1970).

\bibitem[HMU]{HaMauUr} P.\ Hanus, R.D.\ Mauldin, M.\ Urba\'nski , \emph{Thermodynamic formalism and mutifractal analysis of conformal infinite iterated function systems,} Acta Math. Hungar. {\bf 96} (2002) 27-98.

\bibitem[Ho]{Hofdim} F.\ Hofbauer,
{\em Local dimension for piecewise monotonic maps on the interval,}
Ergodic Theory Dynam. Systems {\bf 15} (1995) 1119--1142.

\bibitem[HRS]{HoRaSt} F.\ Hofbauer, P.\ Raith, T.\ Steinberger,
{\em Multifractal dimensions for invariant subsets of piecewise monotonic interval maps,}
Fund. Math. {\bf 176}  (2003) 209--232.

\bibitem[I1]{io1} G. Iommi, \emph{Multifractal analysis for countable Markov shifts,} Ergodic Theory Dynam. Systems \textbf{25} (2005) 1881-1907.

\bibitem[I2]{Iom} G.\ Iommi, \emph{Multifractal analysis of Lyapunov exponent for the backward continued fraction map,}  To appear in Ergodic Theory Dynam. Systems.

\bibitem[IK]{IomKi} G.\ Iommi, J.\ Kiwi, \emph{The Lyapunov spectrum is not always concave,}  J. Stat. Phys. \textbf{135} 535-546 (2009).

\bibitem[IT]{IT} G.\ Iommi, M.\ Todd,
 \emph{Thermodynamic formalism for multimodal maps,} Preprint, arXiv:0907.2406.
     %\bibitem{jpo} A. Johansson, T. Jordan, A. Oberg and M. Pollicott \emph{Multifractal analysis for non-uniformly hyperbolic systems.} to appear in Israel Journal of Mathematics.

\bibitem[JR]{JoRa} T.\ Jordan, M.\ Rams, \emph{Multifractal analysis of weak Gibbs measures for non-uniformly expanding $C^1$ maps,} To appear in Ergodic Theory Dynam. Systems, arXiv:0806.0727.

%\bibitem[K1]{Kellift} G.\ Keller,{\em Lifting measures to Markov extensions, } Monatsh. Math. {\bf 108} (1989) 183--200.

\bibitem[K]{Kellbook} G.\ Keller,
{\em Equilibrium states in ergodic theory,} London Mathematical Society Student Texts, 42. Cambridge University Press, Cambridge, 1998.

\bibitem[KeS]{KeS}  M.\ Kesseb\"ohmer, B.\ Stratmann  \emph{A multifractal analysis for Stern-Brocot intervals, continued fractions and Diophantine growth rates,}  Journal f\"ur die reine und angewandte Mathematik (Crelles Journal) \textbf{605} (2007) 133-163.

\bibitem[L]{Led} F. Ledrappier, \emph{ Some properties of absolutely continuous invariant measures on an interval,} Ergodic Theory Dynam. Systems \textbf{1} (1981) 77--93.

%\bibitem{na} K. Nakaishi \emph{Multifractal formalism for some parabolic maps.} Ergodic Theory Dynam. Systems 24 (2000) 843-857.

\bibitem[MU]{muGIBBS}
R.\ Mauldin, M.\ Urba\'nski, \emph{Gibbs states on the symbolic
space over an infinite alphabet}, Israel J. Math. \textbf{125}
(2001), 93--130.

\bibitem[MS]{MSbook} W.\ de Melo, S.\ van Strien,
{\em One dimensional dynamics,} Ergebnisse Series {\bf 25}, Springer--Verlag, 1993.

\bibitem[Na]{Nak} K.\ Nakaishi, \emph{Multifractal formalism for some parabolic maps,} Ergodic Theory Dynam. Systems \textbf{24} (2000) 843-857.

%\bibitem[Ne]{nw} S. Newhouse \emph{Continuity properties of %entropy.} Ann. of Math. (2) 129 (1989), no. 2, 215--235.

\bibitem[NS]{NoSa} T.\ Nowicki, D.\ Sands,
{\em Non-uniform hyperbolicity and universal bounds for $S$-unimodal maps,}
Invent. Math. {\bf 132}  (1998) 633--680.

\bibitem[O]{Ols} L.\ Olsen, \emph{A multifractal formalism,} Advances in Mathematics \textbf{116} (1995) 82-196.

\bibitem[P]{Pesbook} Y.\ Pesin,  \emph{Dimension Theory in Dynamical Systems,} CUP 1997.

%\bibitem[PS]{PeSe} Y. Pesin and S. Senti \emph{Equilibrium measures for Maps with inducing schemes} Journal of Modern Dynamics v.2 no. 3 (2008) 1--31.

\bibitem[PW]{PesWei_mult} Y.\ Pesin, H.\ Weiss,
\emph{A multifractal analysis of equilibrium measures for conformal expanding maps and Moran-like geometric constructions,}
J. Statist. Phys. {\bf 86} (1997) 233--275.

%\bibitem[PW2]{PesWei_Birk} Y.\ Pesin, H.\ Weiss \emph{The multifractal analysis of Birkhoff averages and large deviations,} Global analysis of dynamical systems, 419--431, Inst. Phys., Bristol, 2001.

%\bibitem[PfS]{PfiSul} C.-E.\ Pfister, W.G.\ Sullivan, \emph{On the topological entropy of saturated sets,}  Ergodic Theory Dynam. Systems  27  (2007),  no. 3, 929--956.

\bibitem[PolW]{PolWe} M.\ Pollicott, H.\ Weiss, \emph{Multifractal analysis of Lyapunov exponent for continued fraction and Manneville-Pomeau transformations and applications to Diophantine approximation,}  Comm. Math. Phys. \textbf{ 207} (1999) 145--171.

%\bibitem[MP]{pm}  Y. Pomeau and P. Manneville \emph{Intermittent transition to turbulence in dissipative dynamical systems}. Comm. Math. Phys. 74 (1980), no. 2, 189--197.

%\bibitem[PreSl]{ps} Th.\ Prellberg, J.\ Slawny \emph{Maps of intervals with indifferent fixed points: thermodynamic formalism and phase transitions,}  J. Statist. Phys. {\bf 66} (1992) 503--514.

\bibitem[Pr]{Prz} F.\ Przytycki, {\em Lyapunov characteristic exponents are nonnegative,} Proc. Amer. Math. Soc. {\bf 119} (1993) 309--317.

%\bibitem[PrRL]{PrzR-L} F.\ Przytycki, J.\ Rivera-Letelier \emph{Nice inducing schemes and the thermodynamics of rational maps,} arXiv:0806.4385

\bibitem[PrU]{PrzUrb_book} F.\ Przytycki, M. Urba\'nski,
\emph{Fractals in the Plane, Ergodic Theory Methods,}
to appear in Cambridge University Press. Available at http://www.math.unt.edu/~urbanski/book1.html.
%\bibitem[R]{ro} H.L. Royden \emph{ Real analysis.} Third edition. Macmillan Publishing Company, New York, 1988. xx+444 pp.

%\bibitem[Ru1]{ru} D. Ruelle,\emph{Thermodynamic formalism. The mathematical structures of classical equilibrium statistical mechanics. With a foreword by Giovanni Gallavotti and Gian-Carlo Rota.} Encyclopedia of Mathematics and its Applications, 5. Addison-Wesley Publishing Co., Reading, Mass., (1978), xix+183 pp.

%\bibitem[Ru]{Ruelleineq} D.\ Ruelle, {\em An inequality for the entropy of differentiable maps,} Bol.Soc. Brasil. Mat. {\bf 9} (1978) 83--87.

\bibitem[RS]{rivshen}  J.\ Rivera-Letelier, W.\ Shen
{\em On statistical properties of one-dimensional maps with weak hyperbolicity.} Article in preparation.

\bibitem[S1]{Sartherm} O.\ Sarig, {\em Thermodynamic formalism for
    countable Markov shifts,}  Ergodic Theory Dynam. Systems {\bf
    19}  (1999) 1565--1593.

\bibitem[S2]{Sarphase} O.\ Sarig,  \emph{Phase transitions for countable Markov shifts,}  Comm. Math. Phys.  \textbf{217}  (2001) 555--577.

\bibitem[S3]{SarBIP} O.\ Sarig, \emph{Existence of Gibbs measures for countable Markov shifts,} Proc. Amer. Math. Soc. \textbf{131} (2003) 1751--1758.

%\bibitem[StU]{StraUrb} B.O. Stratmann and M. Urba\'nski \emph{Real analyticity of topological pressure for parabolically semihyperbolic generalized polynomial-like maps.} Indag. Math. (N.S.) 14 (2003), no. 1, 119--134.

\bibitem[Sch]{sch} J.\ Schmeling, \emph{On the completeness of multifractal spectra,} Ergodic Theory Dynam. Systems \textbf{19} (1999) 1595--1616.

\bibitem[SV]{SVarg} S.\ van Strien, E.\ Vargas,
{\em Real bounds, ergodicity and negative Schwarzian for
multimodal maps,}  J. Amer. Math. Soc.  {\bf 17}  (2004) 749--782.

\bibitem[T]{T} M.\ Todd,  \emph{Multifractal analysis for multimodal maps,} Preprint 2008, arXiv:0809.1074.

%\bibitem{vi} M. Viana \emph{Multidimensional nonhyperbolic attractors.} Inst. Hautes ƒtudes Sci. Publ. Math. No. 85 (1997), 63--96.

\bibitem[Wa]{Walbook}
P.\ Walters, \emph{An Introduction to Ergodic Theory}, Graduate Texts
in Mathematics 79, Springer, 1981.

\bibitem[W]{Wei} H.\ Weiss, \emph{The Lyapunov spectrum for conformal expanding maps and Axiom A surface diffeomorphisms,} J. Statist. Phys. \textbf{95} (1999) 615-632.

%\bibitem[Y]{yo} Y.  Yomdin \emph{ Volume growth and entropy.} Israel %J. Math. 57 (1987), no. 3, 285--300.

\end{thebibliography}
\end{document}